\documentclass[11pt]{amsart}
\usepackage[margin=1in]{geometry}

\usepackage{amssymb}
\usepackage{amsthm}
\usepackage{amsmath}
\usepackage{mathrsfs}
\usepackage{amsbsy}
\usepackage{bm}
\usepackage{hyperref}
\usepackage{tikz}
\usepackage{array}
\usepackage{enumerate}
\usepackage{xcolor}
\usepackage{bbm}
\usepackage{comment}
\usepackage{mathtools}

\definecolor{lavender}{rgb}{0.4,0,1}
\definecolor{Teal}{rgb}{0,0.784,0.784}

\usepackage[noabbrev,capitalise,nameinlink]{cleveref}

\hypersetup{colorlinks=true, citecolor=lavender, linkcolor=lavender,urlcolor=lavender}

\usepackage{caption}
\usepackage[noabbrev,capitalise,nameinlink]{cleveref}
\crefname{conjecture}{Conjecture}{Conjectures}

\newtheorem{theorem}{Theorem}[section]
\newtheorem{proposition}[theorem]{Proposition}
\newtheorem{corollary}[theorem]{Corollary}
\newtheorem{conjecture}[theorem]{Conjecture}

\newtheorem{lemma}[theorem]{Lemma}

\theoremstyle{definition}
\newtheorem{definition}[theorem]{Definition}
\newtheorem{remark}[theorem]{Remark}
\newtheorem{example}[theorem]{Example} 

\newcommand{\BB}{\mathbb{B}}
\newcommand{\Traj}{\mathrm{Traj}}
\newcommand{\ZZ}{\mathbb{Z}}
\newcommand{\WW}{\mathcal{W}}
\newcommand{\calH}{\mathcal{H}}
\newcommand{\HH}{\mathrm{H}}
\newcommand{\SSS}{\mathfrak{S}}
\newcommand{\affS}{\widetilde{\mathfrak{S}}}
\newcommand{\TT}{\mathbb{T}}
\newcommand{\qq}{\mathbf{q}}
\newcommand{\Cycle}{\mathsf{Cycle}}

\usepackage{etoolbox}

\newcommand{\includeSymbol}[1]{\ensuremath{%
	\mathchoice
		{\raisebox{-.7mm}{\includegraphics[height=2.2ex]{#1}}}	
		{\raisebox{-.7mm}{\includegraphics[height=2.2ex]{#1}}}
		{\raisebox{-.6mm}{\includegraphics[height=1.6ex]{#1}}}
		{\raisebox{-.5mm}{\includegraphics[height=1ex]{#1}}}
}}

\robustify{\includeSymbol}
\newcommand{\Reflect}{\includeSymbol{Reflect}}

\robustify{\includeSymbol}
\newcommand{\Refract}{\includeSymbol{Refract}}

\newcommand{\Eflect}{{\color{red}E_{\Reflect}}}
\newcommand{\Efract}{{\color{Teal}E_{\Refract}}}

\newcommand{\dfn}[1]{\textcolor{blue}{\emph{#1}}}

\DeclareMathOperator{\compl}{compl}

\begin{document}

\title[]{Homology in Combinatorial Refraction Billiards}
\subjclass[2010]{}

\author[]{Colin Defant}
\address[]{Department of Mathematics, Harvard University, Cambridge, MA 02138, USA}
\email{colindefant@gmail.com}

\author[]{Derek Liu}
\address[]{Department of Mathematics, Massachusetts Institute of Technology, Cambridge, MA 02139, USA}
\email{derekl@mit.edu}

\begin{abstract}
Given a graph $G$ with vertex set $\{1,\ldots,n\}$, we can project the graphical arrangement of $G$ to an $(n-1)$-dimensional torus to obtain a toric hyperplane arrangement. Adams, Defant, and Striker constructed a toric combinatorial refraction billiard system in which beams of light travel in the torus, refracting (with refraction coefficient $-1$) whenever they hit one of the toric hyperplanes in this toric arrangement. Each billiard trajectory in this system is periodic. We adopt a topological perspective and view the billiard trajectories as closed loops in the torus. We say $G$ is \emph{ensnaring} if all of the billiard trajectories are contractible, and we say $G$ is \emph{expelling} if none of the billiard trajectories is contractible. Our first main result states that a graph is expelling if and only if it is bipartite. We then provide several necessary conditions and several sufficient conditions for a graph to be ensnaring. For example, we show that the complement of an ensnaring graph cannot have a clique as a connected component. We also discuss ways to construct ensnaring graphs from other ensnaring graphs. For example, gluing two ensnaring graphs at a single vertex always yields another ensnaring graph.
\end{abstract} 

\maketitle

\section{Introduction}\label{sec:intro} 

\subsection{Combinatorial Refraction Billiards}
Over the past several years, there has been a flurry of attention devoted to \dfn{refraction billiards} (also called \dfn{tiling billiards}), a variant of mathematical billiards in which a beam of light refracts (i.e., bends) instead of reflecting when it hits a hyperplane \cite{Baird,Barutello,Davis1,Davis2,DeBlasi2,DeBlasi1,Jay,Paris}. In this setting, the refraction coefficient (see \cite{Baird,Davis1,Davis2}) is taken to be $-1$; this means that the new direction that the beam of light faces after it refracts through a hyperplane is the opposite of the direction it would have faced if it had reflected (see \cref{fig:refract}). 

\begin{figure}[ht]
  \begin{center}
  \includegraphics[height=1.3cm]{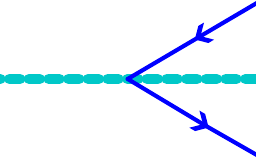}
  \end{center}
\caption{A beam of light refracts through a horizontal line. }\label{fig:refract}
\end{figure}

\dfn{Combinatorial billiards} is a new topic that merges ideas from dynamical algebraic combinatorics and mathematical billiards; it concerns billiard systems that are rigid and discretized in a way that allows them to be modeled combinatorially or algebraically \cite{ADS,Barkley,BDHKL,DefantStoned,DefantJiradilok,DJM,Zhu}. Adams, Defant, and Striker \cite{ADS} recently introduced combinatorial refraction billiard systems, which live in high-dimensional Euclidean spaces or tori. This is in contrast to non-combinatorial refraction billiard systems from the literature, which almost exclusively live in the $2$-dimensional Euclidean plane. Indeed, the rigid constraints placed on combinatorial billiard systems allow one to prove very precise results in high-dimensional settings, whereas classical billiards is often concerned with more analytical problems in low-dimensional settings.  

Let \[V=\{(\gamma_1,\ldots,\gamma_n)\in\mathbb R^n:\gamma_1+\cdots+\gamma_n=0\}.\] Let $e_i$ denote the $i$-th standard basis vector in $\mathbb R^n$. Let $[n]=\{1,\ldots,n\}$. For $r\in[n]$, consider the vector $\delta^{(r)}\coloneq -ne_r+\sum_{i\in[n]}e_i$ whose $r$-th coordinate is $-(n-1)$ and whose other coordinates are all $1$. Let ${\Delta=\{\pm\delta^{(1)},\ldots,\pm\delta^{(n)}\}}$. For $1\leq i<j\leq n$ and $k\in\ZZ$, let \[\HH_{i,j}^k=\{(\gamma_1,\ldots,\gamma_n)\in V:\gamma_i-\gamma_j=k\}.\] Let \begin{equation}\label{eq:calH}
\calH_n=\{\HH_{i,j}^k:1\leq i<j\leq n,\,k\in\ZZ\}.
\end{equation} 
As discussed in \cref{sec:prelim}, $\calH_n$ is the \emph{Coxeter arrangement} of the affine symmetric group $\affS_n$. 

Fix a subset $\WW\subseteq\calH_n$. The elements of $\WW$ will serve as our refractive hyperplanes, so we call them \dfn{metalenses}\footnote{In the physical world,
materials typically have positive refraction coefficients. However, physicists and material scientists have discovered certain \emph{metamaterials}, which have negative refraction coefficients \cite{meta}. This inspired Adams, Defant, and Striker to introduce the term \emph{metalens} \cite{ADS}.}. Hyperplanes in $\calH_n\setminus\WW$ are called \dfn{windows}. Starting at a point in $V$ that is not in any of the hyperplanes in $\calH_n$, we can shine a beam of light in the direction of one of the vectors in $\Delta$. Whenever the beam of light hits a metalens, it refracts; otherwise, it always moves in a straight line. Let $\widetilde{\Traj}_\WW$ be the set of billiard trajectories defined in this manner. (Formally, elements of $\widetilde{\Traj}_\WW$ are piecewise-linear curves in $V$.) See the top images in \cref{fig:path3,fig:K3} for examples. One can show that the beam of light always travels in the direction of a vector in $\Delta$ (except, of course, at the exact moments when it is refracting) and that the trajectory of the beam of light is bounded if and only if it is periodic. 

\subsection{Toric Billiards}

The \dfn{coroot lattice} of $\affS_n$ is the lattice $Q^\vee=V\cap\ZZ^n$. We will work with the $(n-1)$-dimensional torus $\TT\coloneq V/Q^\vee$. Let $\qq\colon V\to\TT$ be the natural quotient map. For $\HH\in\calH_n$, the image $\qq(\HH)$ of $\HH$ under the quotient map is called a \dfn{toric hyperplane}. 

The map $\qq$ projects two hyperplanes $\HH_{i,j}^k$ and $\HH_{i',j'}^{k'}$ (with $i<j$ and $i'<j'$) to the same toric hyperplane if and only if $i=i'$ and $j=j'$. Thus, the map $(i,j)\mapsto\qq(\HH_{i,j}^0)$ is a bijection from $\{(i,j):1\leq i<j\leq n\}$ to the set of toric hyperplanes in our toric hyperplane arrangement. This allows us to encode a collection of toric hyperplanes as a (simple) graph $G=([n],E)$ with vertex set $[n]$; a toric hyperplane $\qq(\HH_{i,j}^0)$ is in the collection if and only if $i$ and $j$ are adjacent in $G$. 

We are primarily interested in defining a combinatorial refraction billiard system by projecting the billiard trajectories described above from the Euclidean space $V$ into the torus $\TT$. In order for this to make sense, we need to ensure that two hyperplanes $\HH,\HH'\in\calH_n$ satisfying $\qq(\HH)=\qq(\HH')$ are either both windows or both metalenses. Thus, we will assume from now on that the collection $\WW\subseteq\calH_n$ is of the form 
\[\WW_G=\{\HH_{i,j}^k:\{i,j\}\in E,\,k\in\ZZ\},\] where $G=([n],E)$ is a graph. Under this assumption, it makes sense to define a \dfn{toric metalens} (respectively, \dfn{toric window}) to be a toric hyperplane of the form $\qq(\HH)$ such that $\HH\in\WW_G$ (respectively, $\HH\not\in\WW_G$). Let $\Traj_G$ be the set of toric billiard trajectories obtained by applying $\qq$ to billiard trajectories in $\widetilde{\Traj}_{\WW_G}$.  

\subsection{Homology} 
The main novel contribution of the present article is a topological perspective on combinatorial billiards. Because the beams of light in our Euclidean billiard system always travel in one of the directions in $\Delta$, one can show that all of the billiard trajectories in our toric billiard system are periodic. This means that the trajectories in $\Traj_G$ are closed loops in $\TT$; we are interested in understanding the homology classes of these loops. In particular, we are interested in whether or not such loops are contractible. 

\begin{definition}\label{def:ensnaring}
Let $n\geq 2$, and let $G=([n],E)$ be a simple graph. We say $G$ is \dfn{ensnaring} if all of the trajectories in $\Traj_G$ are contractible. We say $G$ is \dfn{expelling} if none of the trajectories in $\Traj_G$ is contractible. 
\end{definition} 

\begin{remark}
Each trajectory in $\Traj_G$ can be lifted to paths in $\widetilde{\Traj}_{\WW_G}$. A trajectory in $\Traj_G$ is contractible if and only if all of its lifts are bounded (equivalently, periodic). Hence, a graph is ensnaring (respectively, expelling) if and only if all (respectively, none) of the trajectories in $\widetilde{\Traj}_{\WW_G}$ are bounded. 
\end{remark}

\begin{remark}
The collection of homology classes of loops in $\Traj_G$ only depends on the isomorphism class of $G$, so we will often use the terms \emph{ensnaring} and \emph{expelling} to refer to isomorphism classes of graphs (without explicitly identifying the vertices with the elements of $[n]$). 
\end{remark} 

\begin{remark}\label{rem:1vertex} 
Although our geometric perspective using billiards does not make sense when $n=1$, we will make the convention that a graph with only $1$ vertex is both ensnaring and expelling. 
\end{remark}

\begin{remark}\label{rem:2vertices}
The simplest case to consider is when $n=2$, although this case is so simple as to be potentially confusing. Here, the space $V$ is $1$-dimensional, and the hyperplanes in $\mathcal H_n$ are points. When a beam of light refracts through a hyperplane in $\mathcal H_n$, it simply passes through as it would if it had not refracted. In other words, metalenses behave exactly the same as windows when $n=2$. It follows that a graph with $2$ vertices is expelling (and not ensnaring). 
\end{remark} 

\begin{remark}
We will see that there exist graphs that are neither ensnaring nor expelling. 
\end{remark}

\subsection{Main Results and Outline} 

In \cref{sec:prelim}, we provide necessary background on the affine symmetric group and its standard geometric representation. We also discuss a purely combinatorial model for our toric combinatorial refraction billiard systems and explain how to determine from this model whether or not a billiard trajectory is contractible. 

\cref{sec:expelling} is devoted to proving our first main result, which states that a graph is expelling if and only if it is bipartite (\cref{thm:expelling}). 

The bulk of our work is devoted to understanding ensnaring graphs, which appear to be far more unwieldy than expelling graphs. While we will not completely settle this problem, we will find several necessary conditions and sufficient conditions for a graph to be ensnaring. 

\cref{sec:completecycle} concerns two natural families of graphs: complete graphs and cycles. We prove that a complete graph with at least $3$ vertices is ensnaring (\cref{thm:complete}). We also show that a cycle graph with $n\geq 3$ vertices is ensnaring if and only if $n$ is odd (\cref{thm:cycles}). 

In \cref{sec:wedgeunion}, we discuss two operations that can be used to build larger graphs from smaller ones. First, we consider a \dfn{wedge} of two graphs $G_1$ and $G_2$, which is a graph obtained by gluing a single vertex of $G_1$ to a vertex of $G_2$. We prove that a wedge of two ensnaring graphs is ensnaring (\cref{thm:wedge}). We also conjecture that a wedge of two graphs that are not ensnaring is not ensnaring (\cref{conj:wedge_nonensnaring}). We then consider the disjoint union of two graphs, which, from our perspective in this article, is not as easy to understand as one might think. We use the combinatorial model for toric combinatorial refraction billiard systems discussed in \cref{sec:prelim} to define yet another family of graphs that we call \emph{revolutionary}, and we prove that the disjoint union of two graphs is ensnaring if and only if both of the graphs are ensnaring but not revolutionary (\cref{thm:union}). We also prove that if $n\geq 5$ is odd, then the cycle graph with $n$ vertices is revolutionary (\cref{thm:revcycles}). 

In \cref{sec:wedgecomplete}, we prove that a graph formed by wedging trees to the vertices of a complete graph with at least $3$ vertices is ensnaring. Since trees are expelling (and hence, not ensnaring), this shows, in particular, that a wedge of an ensnaring graph and a non-ensnaring graph can be ensnaring. 

\cref{sec:nonensnaring} discusses a local obstruction that prevents a graph from being ensnaring. More precisely, 
\cref{thm:local_not_ensnaring} provides a configuration of $4$ vertices satisfying some conditions that, if present in a graph, ensures the graph cannot be ensnaring.  

The \dfn{complement} of a graph $G$, denoted $\overline G$, is the graph with the same vertex set as $G$ defined so that two vertices are adjacent in $\overline G$ if and only if they are not adjacent in $G$. For integers $m\leq n$, we define the \dfn{$n$-vertex complement} of an $m$-vertex graph $H$ to be the graph whose complement is the disjoint union of $H$ with $n-m$ isolated vertices. In \cref{sec:connected_complement}, we prove that a graph with $n$ vertices is ensnaring if and only if the $n$-vertex complement of each of the connected components of $\overline G$ is ensnaring (\cref{thm:complcomp}). We show that for $2\leq m\leq n$, the $n$-vertex complement of a complete graph on $m$ vertices is not ensnaring (\cref{thm:complcomplete}). We also prove that for integers $\ell,r,n$ with $\ell+r<n$, the $n$-vertex complement of the complete bipartite graph $K_{\ell,r}$ is ensnaring if and only if $n-\ell-r$ is odd and $(\ell,r)\neq(1,1)$. 

Finally, in \cref{sec:future}, we collate several conjectures and suggestions for future work. This includes a generalization of our setup involving toric combinatorial billiard systems in which each toric hyperplane is a window, a metalens, or a \emph{mirror}. A mirror is a third type of hyperplane that causes a beam of light to reflect. 

\section{Preliminaries}\label{sec:prelim}

Let $\SSS_n$ denote the \dfn{symmetric group} consisting of permutations of the set $[n]$. The \dfn{affine symmetric group} is the group of bijections $u\colon\ZZ\to\ZZ$ such that 
\begin{itemize}
\item $u(i+n)=u(i)+n$ for all $i\in\ZZ$;
\item $u(1)+\cdots+u(n)=\binom{n+1}{2}$. 
\end{itemize}
The \dfn{simple reflection} $s_i$ is the element of $\affS_n$ that swaps $i+kn$ and $i+1+kn$ for all $k\in\ZZ$. Since $s_i=s_{i+n}$ for every $i$, we can view the indices of the simple reflections as elements of $\ZZ/n\ZZ$. Then $(\affS_n,\{s_i:i\in\ZZ/n\ZZ\})$ is a Coxeter system. 

The hyperplane arrangement $\calH_n$ in \eqref{eq:calH} is the \dfn{Coxeter arrangement} of $\affS_n$. There is a faithful right action of $\affS_n$ on $V$ in which $s_0$ acts as the reflection through the hyperplane $\HH_{1,n}^1$ and each $s_i$ for $i\in[n-1]$ acts as the reflection through the hyperplane $\HH_{i,i+1}^0$. The closures of the connected components of $V\setminus\bigcup_{\HH\in\calH_n}\HH$ are called \dfn{alcoves}. The \dfn{fundamental alcove} is 
\[\BB=\{(\gamma_1,\ldots,\gamma_n)\in V:\gamma_1\geq\gamma_2\geq\cdots\geq\gamma_n\geq\gamma_1-1\}.\]
The map $w\mapsto\BB w$ is a bijection from $\affS_n$ to the set of alcoves. Two alcoves $\BB w$ and $\BB w'$ are adjacent (i.e., share a common facet) if and only if $w'=s_iw$ for some $i\in\ZZ/n\ZZ$. For $u\in\affS_n$ and $i\in\ZZ/n\ZZ$, we write $\HH^{(u,s_i)}$ for the unique hyperplane separating $\BB u$ from $\BB s_iu$. 

Let $G=([n],E)$ be a simple graph, and let $\WW_G=\{\HH_{i,j}^k\in\calH_n:\{i,j\}\in E\}$ be the corresponding set of metalenses. Let $\widetilde\Xi_n=\affS_n\times\ZZ/n\ZZ\times\{\pm 1\}$. Following \cite{ADS}, we define $\widetilde\Theta_G\colon\widetilde\Xi_n\to\widetilde\Xi_n$ by 
\[\widetilde\Theta_G(w,i,\epsilon)=\begin{cases} (s_i w,i+\epsilon,\epsilon) & \mbox{if }\HH^{(w,s_i)}\not\in \WW_G; \\   (s_iw,i-\epsilon,-\epsilon) & \mbox{if }\HH^{(w,s_i)}\in \WW_G.  \end{cases}\] 
Starting with a triple $(u_0,i_0,\epsilon_0)$, let $\widetilde\Theta_G^k(u_0,i_0,\epsilon_0)=(u_k,i_k,\epsilon_k)$. Shine a beam of light from a point in the interior of $\BB u_0$ in the direction of the vector $\epsilon_{0}\delta^{(i_0+\frac{1}{2}(1-\epsilon_0))}u_0\in\Delta$. Whenever the beam of light hits a window, let it pass straight through; whenever it hits a metalens, let it refract. As discussed in \cite{ADS}, after the beam of light has passed through a hyperplane in $\calH_n$ for the $M$-th time, it will be inside the alcove $\BB u_M$ facing in the direction of $\epsilon_M\delta^{(i_M+\frac{1}{2}(1-\epsilon_M))}u_M$. Thus, we can think of the sequence $(\BB u_M)_{M\geq 0}$ as a discretization of the beam of light. The trajectory of the beam of light is periodic if and only if this sequence is periodic. 

For each coroot vector $\xi\in Q^\vee=V\cap\ZZ^n$, there is an element of $\affS_n$ that acts via translation by $\xi$. This allows us to view $Q^\vee$ as an abelian normal subgroup of $\affS_n$. The affine symmetric group decomposes as the semidirect product $\affS_n\cong\SSS_n\ltimes Q^\vee$. There is a natural quotient map $\affS_n\to\SSS_n$, which we denote by $w\mapsto\overline w$. Let $\TT=V/Q^\vee$. For every $w\in\affS_n$, we have $\qq(\BB w)=\qq(\BB\overline w)$, where $\qq\colon V\to\mathbb T$ is the natural topological quotient map. For $i\in\ZZ/n\ZZ$, the permutation $\overline s_i$ is the transposition that swaps $i$ and $i+1$; in particular, $\overline s_0$ swaps $n$ and $1$. 

Let us now project the combinatorial refraction billiard system given by $\widetilde\Theta_G$ to the torus. Let $\Xi_n=\SSS_n\times\ZZ/n\ZZ\times\{\pm 1\}$. For $w\in\affS_n$ and $i\in\ZZ/n\ZZ$, the hyperplane $\HH^{(w,s_i)}$ is in $\WW_G$ if and only if $\{\overline w^{-1}(i),\overline w^{-1}(i+1)\}$ is an edge in $G$. Thus, we define $\Theta_G\colon\Xi_n\to\Xi_n$ by 
\[\Theta_G(v,i,\epsilon)=\begin{cases} (\overline s_i v,i+\epsilon,\epsilon) & \mbox{if }\{v^{-1}(i),v^{-1}(i+1)\}\not\in E; \\   (\overline s_iv,i-\epsilon,-\epsilon) & \mbox{if }\{v^{-1}(i),v^{-1}(i+1)\}\in E.  \end{cases}\] By construction, if $\widetilde\Theta_G(w,i,\epsilon)=(w',i',\epsilon')$, then $\Theta_G(\overline w,i,\epsilon)=(\overline{w'},i',\epsilon')$. Thus, we can view an orbit of $\Theta_G$ as a discretization of a billiard trajectory in the torus $\mathbb T$. 

In \cite{DefantToric}, Defant introduced a combinatorial dynamical system called \emph{toric promotion} as a cyclic analogue of Sch\"utzenberger's famous promotion operator. Adams, Defant, and Striker \cite{ADS} then interpreted toric promotion as a toric combinatorial billiard system, and they generalized it to a map called \emph{toric promotion with reflections and refractions} by adding refractions into the picture. The map $\Theta_G$ that we have defined is a special case of toric promotion with reflections and refractions in which there are no reflections, so we will simply call it \dfn{refractive toric promotion} (on $G$). 

Assume $n\geq 3$, and let $\Cycle_n$ be the cycle graph with vertex set $\ZZ/n\ZZ$, where each vertex $i$ is adjacent to $i-1$ and $i+1$; we embed $\Cycle_n$ in the plane so that $1,2,\ldots,n$ is the clockwise cyclic ordering of the vertices. For each vertex $a$ of $G$, we consider a formal symbol ${\bf a}$ called the \dfn{replica} of $a$.\footnote{We will always tacitly assume that a bolded version of a symbol representing a vertex is the replica of that vertex.} As in \cite{ADS}, we identify a triple $(v,i,\epsilon)\in\Xi_n$ with its \dfn{stone diagram}, which is constructed as follows: 
\begin{enumerate}
\item For each vertex $a$ of $G$, place the replica ${\bf a}$ on the vertex $v(a)$ of $\Cycle_n$. 
\item Place a stone on the vertex $i+\frac{1}{2}(1-\epsilon)$ of $\Cycle_n$. 
\item If $\epsilon=1$, orient the stone clockwise; if $\epsilon=-1$, orient the stone counterclockwise. 
\end{enumerate}
We will also frequently refer to the elements of $\Xi_n$ as \emph{stone diagrams}. In the stone diagram $(v,i,\epsilon)$, we say the stone \dfn{coexists with} the replica that sits on the vertex $i+\frac{1}{2}(1-\epsilon)$, and we say the stone \dfn{points toward} the replica that sits on the vertex $i+\frac{1}{2}(1+\epsilon)$. 

\begin{remark}
We only define stone diagrams when $n\geq 3$. This will not cause any problems because we already handled graphs with $1$ or $2$ vertices in \cref{rem:1vertex,rem:2vertices}. 
\end{remark}

The map $\Theta_G$ has a simple interpretation if we identify triples in $\Xi_n$ with their stone diagrams. Suppose the stone is sitting on vertex $j$ and has orientation $\epsilon$. When we apply $\Theta_G$, we swap the replicas sitting on vertices $j$ and $j+\epsilon$. If the vertices corresponding to these two replicas are adjacent in $G$, then we reverse the orientation of the stone and keep it in the same location; otherwise, we move the stone from vertex $j$ to vertex $j+\epsilon$ and do not change its orientation. 

\begin{example}\label{exam:path3}
Let $n=3$, and let $G$ be the graph \[\begin{array}{l}\includegraphics[height=0.672cm]{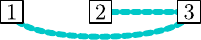}\end{array}.\] The top image of \cref{fig:path3} shows the Coxeter arrangement of $\affS_3$; metalenses are drawn in thick teal, while windows are in thin grey. Two alcoves are given the same color if and only if one is a translation of the other by a coroot vector. The top image also shows a forward orbit of $\widetilde\Theta_G$, where a triple $(u,i,\epsilon)\in\widetilde\Xi_3$ is represented by a blue arrow in the interior of $\BB u$ pointing in the direction of $\epsilon\delta^{(i+\frac{1}{2}(1-\epsilon))}u$. Note how the sequence of blue arrows discretizes a beam of light that refracts through metalenses and passes straight through windows. 

In the bottom image in \cref{fig:path3}, the torus $\mathbb T$ is drawn as a hexagon with opposite sides glued together. Each toric alcove is labeled by the corresponding permutation in $\SSS_3$. The bottom image shows the size-$18$ orbit of $\Theta_G$ obtained by projecting the orbit of $\widetilde\Theta_G$ from the top image to the torus. Each element of $\Xi_3$ is represented both as a blue arrow in the torus and as a stone diagram. The orbit of $\Theta_G$ discretizes a billiard trajectory of a beam of light that traverses a closed loop in the torus. The trajectory of the beam of light in the top image is a lift of the trajectory of the beam of light in the torus; since the former is not bounded, the closed loop traversed by the latter is not contractible.  
\end{example}

\begin{figure}[]
  \begin{center}
  \includegraphics[width=0.567\linewidth]{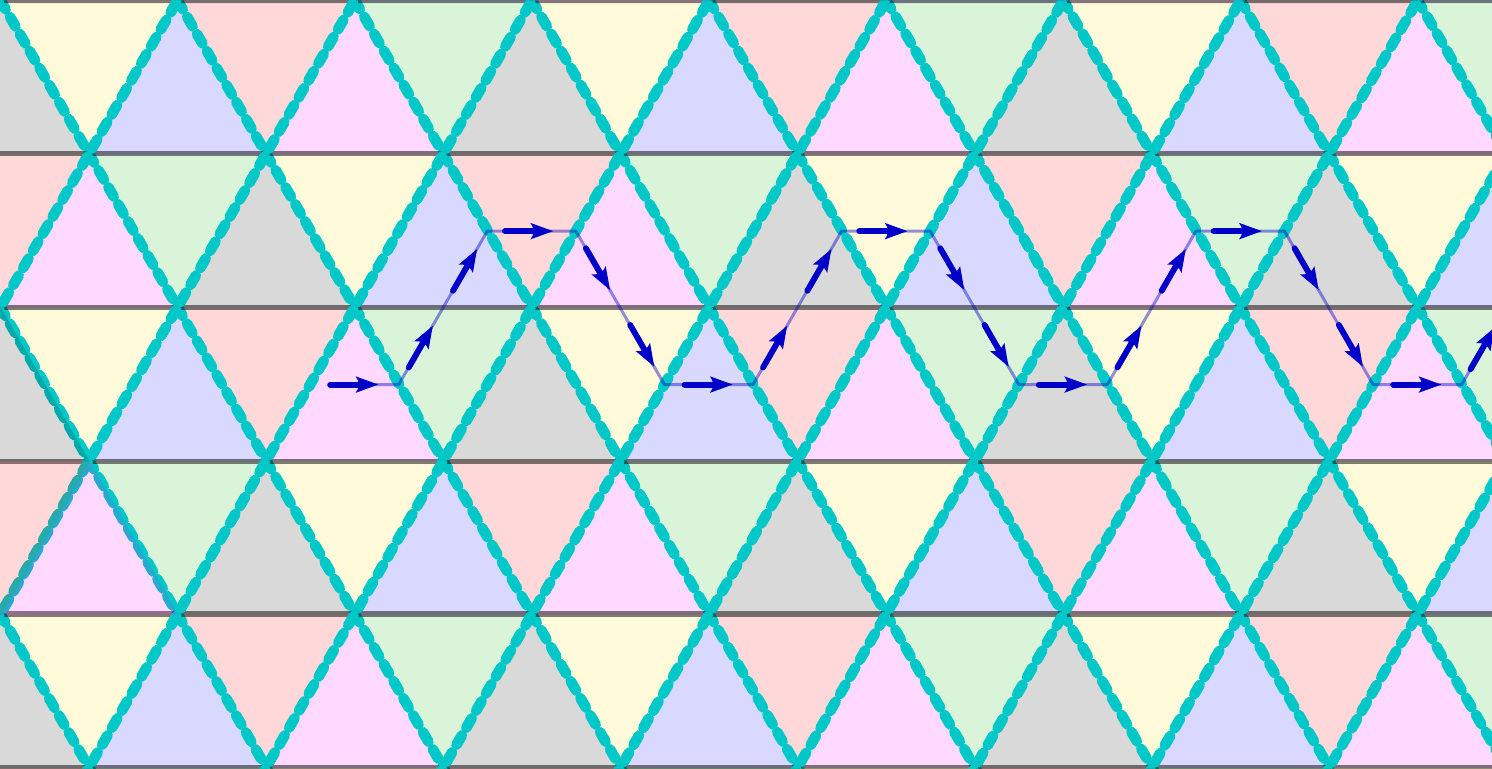} \\ \vspace{0.3cm}
  \includegraphics[width=\linewidth]{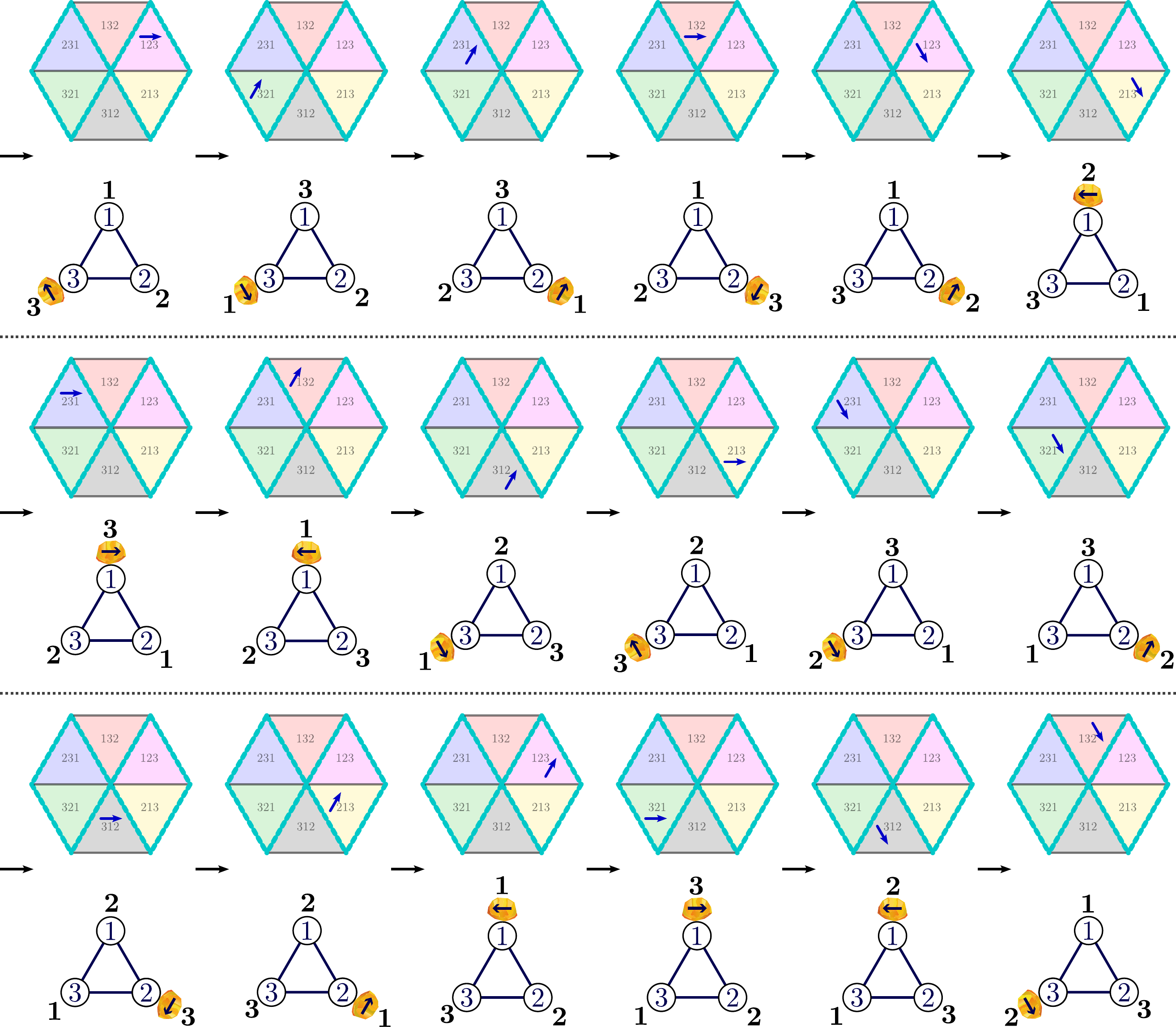}
  \end{center}
\caption{On the top is an unbounded combinatorial refraction billiard trajectory in $\affS_3$. On the bottom is the corresponding toric combinatorial refraction billiard trajectory, with each state represented both as an arrow in the torus and as a stone diagram.}\label{fig:path3}
\end{figure}

\begin{example}
Let $n=3$, and let $G$ be the graph \[\begin{array}{l}\includegraphics[height=0.672cm]{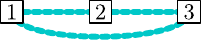}\end{array}.\] The top image of \cref{fig:K3} shows an orbit of $\widetilde\Theta_G$, where a triple $(u,i,\epsilon)\in\widetilde\Xi_3$ is represented by a blue arrow in the interior of $\BB u$ pointing in the direction of $\epsilon\delta^{(i+\frac{1}{2}(1-\epsilon))}u$. The sequence of blue arrows discretizes a beam of light that refracts every time it hits a hyperplane in $\calH_3$ since all such hyperplanes are metalenses. 

The bottom image in \cref{fig:K3} shows the size-$6$ orbit of $\Theta_{G}$ obtained by projecting the orbit of $\widetilde\Theta_{G}$ from the top image to the torus. The orbit of $\Theta_{G}$ discretizes a billiard trajectory of a beam of light that traverses a closed loop in the torus. The trajectory of the beam of light in the top image is a lift of the trajectory of the beam of light in the torus; since the former is bounded, the closed loop traversed by the latter is contractible.  
\end{example}

\begin{figure}[]
  \begin{center}
  \includegraphics[width=0.567\linewidth]{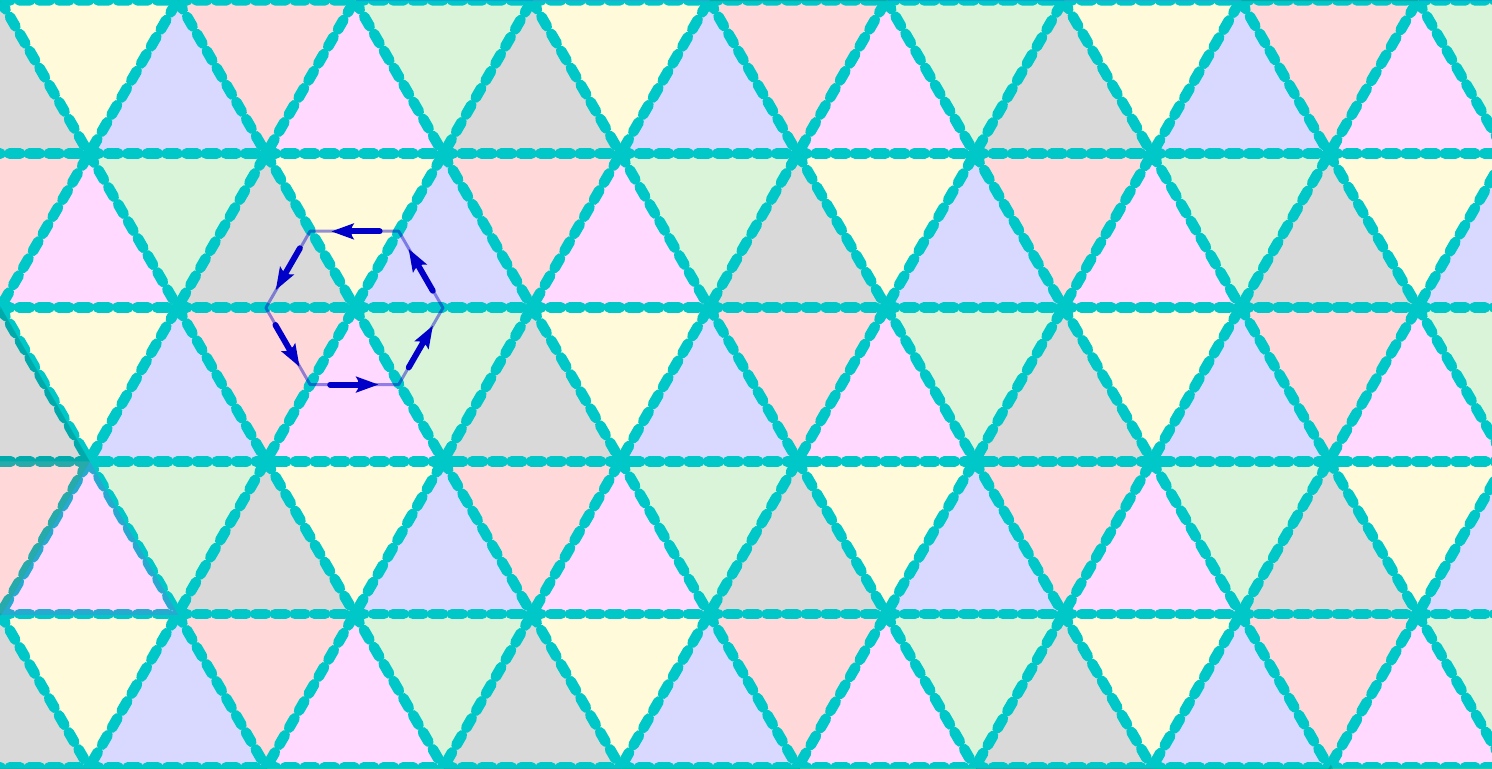} \\ \vspace{0.3cm}
  \includegraphics[width=\linewidth]{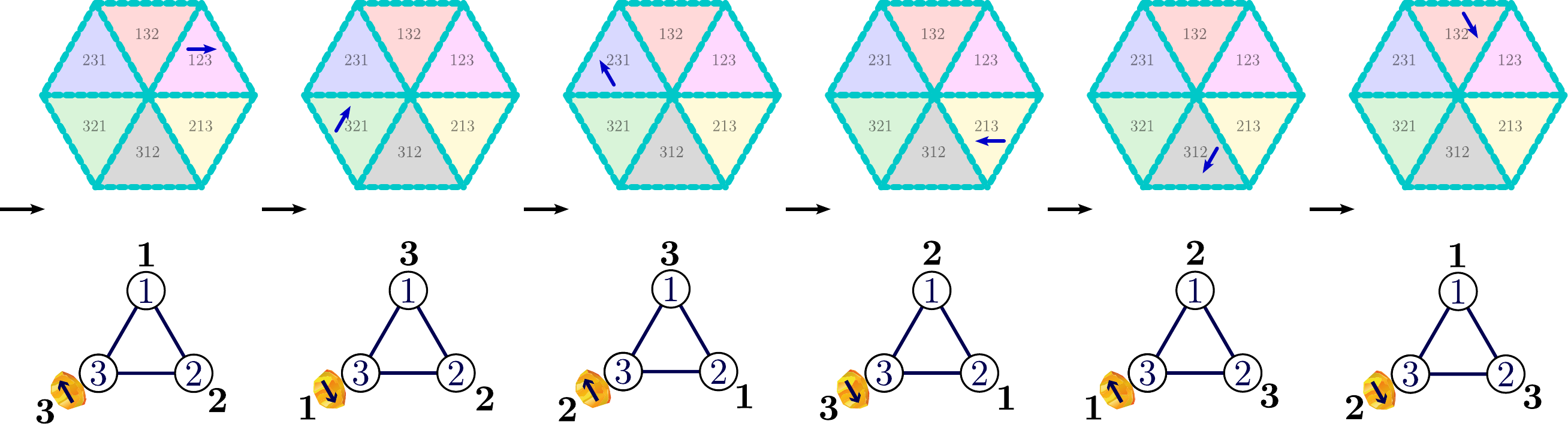}
  \end{center}
\caption{On the top is a bounded combinatorial refraction billiard trajectory in $\affS_3$. On the bottom is the corresponding toric combinatorial refraction billiard trajectory, with each state represented both as an arrow in the torus and as a stone diagram.}\label{fig:K3}
\end{figure}

Let $\mathcal O\subseteq\Xi_n$ be an orbit of $\Theta_{G}$ of size $P$. Let $D\in\mathcal O$ be a stone diagram in this orbit. Consider starting with $D$ and applying $\Theta_{G}$ iteratively $P$ times. If we watch the stone diagrams throughout this process, we will see the replicas move around until eventually settling back into the configuration in which they started. Define the \dfn{winding number} of the replica ${\bf a}$ with respect to $\mathcal O$, denoted $\mathfrak{w}_{\mathcal O}({\bf a})$, to be the net number of clockwise revolutions around $\Cycle_n$ that ${\bf a}$ traverses during this process. In other words, $n\cdot\mathfrak w_{\mathcal O}({\bf a})$ is the net number of clockwise steps that ${\bf a}$ makes during these $P$ iterations of $\Theta_{G}$. (Note that $\mathfrak w_{\mathcal O}({\bf a})$ is negative if ${\bf a}$ moves counterclockwise more than clockwise.) Let us also write $\mathfrak w_{\mathcal O}(a)=\mathfrak w_{\mathcal O}({\bf a})$, where $a$ is the vertex of $G$ corresponding to ${\bf a}$. Since the vertex set of $G$ is $[n]$, we obtain the \dfn{winding vector} $\vec{\mathfrak{w}}_{\mathcal O}=(\mathfrak w_{\mathcal O}(1),\ldots,\mathfrak w_{\mathcal O}(n))$ of the orbit $\mathcal O$. Every time a replica moves one step clockwise, another replica must move one step counterclockwise; this implies that \[\mathfrak{w}_{\mathcal O}(1)+\cdots+\mathfrak{w}_{\mathcal O}(n)=0,\] so $\vec{\mathfrak w}_{\mathcal O}\in Q^\vee$. 

Now let $D=(v,i,\epsilon)\in\mathcal O$, and let $u_0\in\affS_n$ be such that $\overline u_0=v$. Let $\widetilde\Theta_{G}^k(u_0,i,\epsilon)=(u_k,i_k,\epsilon_k)$. As before, let $P$ be the period of $(v,i,\epsilon)$ under $\Theta_{G}$. Then $i_P=i$ and $\epsilon_P=\epsilon$, and we have $\overline{u}_P=v$. In fact, it follows from the definition of the winding vector $\vec{\mathfrak{w}}_{\mathcal O}$ that $\BB u_P=\BB u_0-\vec{\mathfrak{w}}_{\mathcal O}$. By iterating, we find that $\BB u_{\ell P}=\BB u_0-\ell\vec{\mathfrak{w}}_{\mathcal O}$ for every positive integer $\ell$. This means that the sequence $(\BB u_k)_{k\geq 0}$ is bounded (equivalently, periodic) if and only if $\vec{\mathfrak{w}}_{\mathcal O}=0$. We can lift a (closed, toric) billiard trajectory discretized by the sequence $(\Theta_{G}^k(v,i,\epsilon))_{k=0}^P$ to a (piece of a) billiard trajectory discretized by the sequence $(u_k,i_k,\epsilon_k)_{k=0}^P$. It follows that a billiard trajectory discretized by the sequence $(\Theta_{G}^k(v,i,\epsilon))_{k=0}^P$ is contractible if and only if $\vec{\mathfrak{w}}_{\mathcal O}=0$. Thus, we will say that the orbit $\mathcal O$ is \dfn{contractible} if $\vec{\mathfrak{w}}_{\mathcal O}=0$. We deduce the following proposition, which provides a purely combinatorial way to determine if a graph is ensnaring, expelling, or neither. 

\begin{proposition}\label{prop:main}
Let $G=([n],E)$ be a simple graph. Then $G$ is ensnaring if and only if all of the winding vectors of orbits of $\Theta_G$ are $0$. On the other hand, $G$ is expelling if and only if none of the winding vectors of orbits of $\Theta_G$ is $0$. 
\end{proposition}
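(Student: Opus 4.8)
The plan is to exploit the discretization developed in the preceding discussion, so that the proposition reduces to a direct translation of \cref{def:ensnaring} through the correspondence between orbits of $\Theta_G$ and trajectories in $\Traj_G$. First I would record that this correspondence is a bijection: a state $(v,i,\epsilon)\in\Xi_n$ encodes the toric alcove currently occupied by the beam of light together with its direction of travel, applying $\Theta_G$ advances the beam to the next alcove (refracting or passing straight through according to whether it crosses a toric metalens), and so the orbit of $(v,i,\epsilon)$ under $\Theta_G$ is exactly the cyclic sequence of states discretizing the toric trajectory through that alcove in that direction. Conversely, every trajectory in $\Traj_G$ is discretized by some orbit, and this discretization does not depend on which representative of the orbit one starts from.

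Next I would invoke the contractibility criterion established just before the statement: for an orbit $\mathcal O$ of period $P$ and any lift $u_0$ of a representative $(v,i,\epsilon)\in\mathcal O$, the lifted Euclidean sequence satisfies $\BB u_{\ell P}=\BB u_0-\ell\vec{\mathfrak{w}}_{\mathcal O}$, and hence is bounded precisely when $\vec{\mathfrak{w}}_{\mathcal O}=0$; by the remark following \cref{def:ensnaring}, the toric trajectory discretized by $\mathcal O$ is therefore contractible if and only if $\vec{\mathfrak{w}}_{\mathcal O}=0$. Combining this equivalence with the bijection of the first step, $G$ is ensnaring if and only if every trajectory in $\Traj_G$ is contractible, i.e., if and only if $\vec{\mathfrak{w}}_{\mathcal O}=0$ for all orbits $\mathcal O$ of $\Theta_G$; symmetrically, $G$ is expelling if and only if no trajectory in $\Traj_G$ is contractible, i.e., if and only if $\vec{\mathfrak{w}}_{\mathcal O}\neq 0$ for all orbits $\mathcal O$.

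The substantive content here has already been supplied by the preparatory discussion---namely the identity $\BB u_{\ell P}=\BB u_0-\ell\vec{\mathfrak{w}}_{\mathcal O}$ and the equivalence between boundedness of the Euclidean lifts and contractibility of the toric loops---so the only point I expect to require genuine care is confirming that the orbit--trajectory dictionary is well-defined and exhaustive. Since $\vec{\mathfrak{w}}_{\mathcal O}$ is an invariant of the entire orbit rather than of any chosen representative, this is routine bookkeeping, and the proof is essentially a two-line deduction from the facts already in hand.
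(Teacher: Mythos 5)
Your proposal is correct and follows essentially the same route as the paper: the paper gives no separate proof, deducing the proposition directly from the preceding discussion (the identity $\BB u_{\ell P}=\BB u_0-\ell\vec{\mathfrak{w}}_{\mathcal O}$, the resulting equivalence between boundedness of lifts and $\vec{\mathfrak{w}}_{\mathcal O}=0$, and the orbit--trajectory discretization), exactly as you do. Your extra care about the orbit--trajectory dictionary being well-defined and exhaustive is the same bookkeeping the paper handles implicitly when it declares that orbits of $\Theta_G$ discretize toric billiard trajectories.
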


Whenever we consider the stone diagram associated to a triple $(v,i,\epsilon)\in\Xi_n$, we also consider an associated \dfn{coin diagram}, which consists of the graph $G$ together with a coin placed on the vertex whose replica sits on the same vertex as the stone (that is, we place the coin on the vertex $v^{-1}(i+\frac{1}{2}(1-\epsilon))$). Whenever we apply $\Theta_{G}$, the coin either moves to an adjacent vertex in $G$ or does not move at all. To be more precise, suppose the stone is sitting on vertex $j$ and is pointing toward vertex $j+\epsilon$. Let ${\bf a}$ and ${\bf b}$ be the replicas sitting on $j$ and $j+\epsilon$, respectively. When we apply $\Theta_{G}$, we swap the replicas ${\bf a}$ and ${\bf b}$. If $a$ and $b$ are adjacent in $G$, then the coin moves from $a$ to $b$; otherwise, the coin stays put on vertex $a$. See \cref{fig:coin_example}. 

\begin{figure}[]
  \begin{center}
  \includegraphics[width=0.965\linewidth]{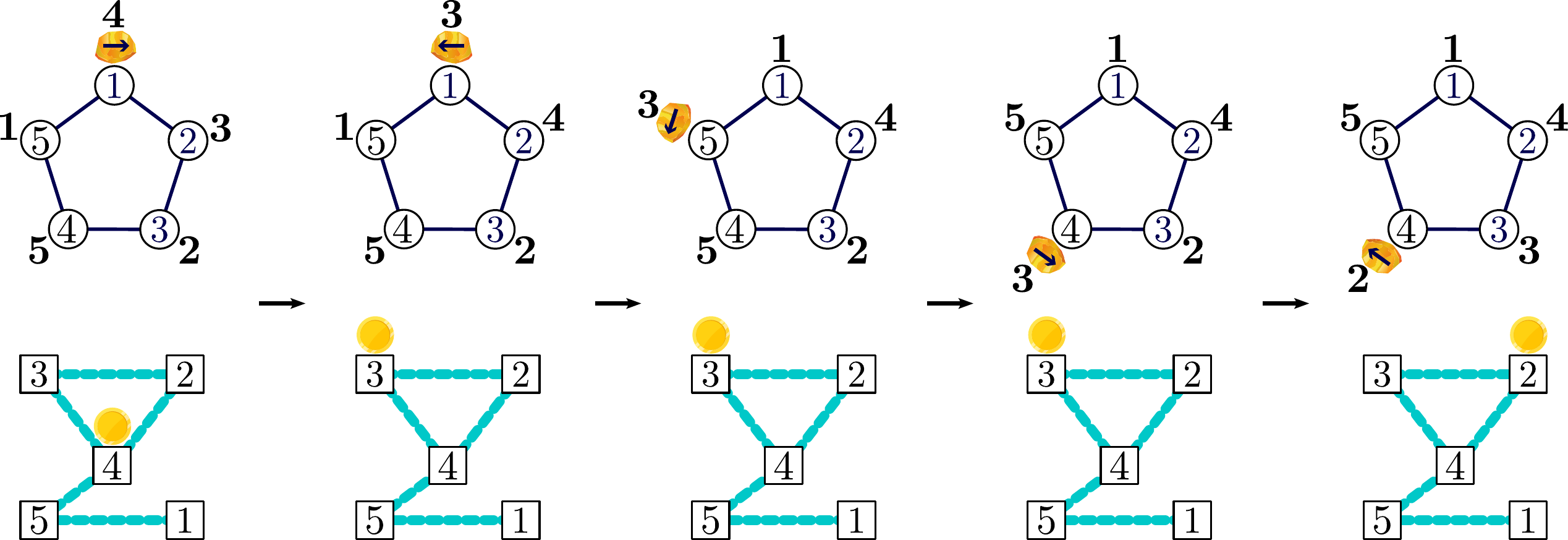}
  \end{center}
\caption{Four applications of the map $\Theta_{G}$. At each step, we represent a triple in $\Xi_5$ as a stone diagram with the associated coin diagram drawn below. }\label{fig:coin_example} 
\end{figure}

\section{Characterization of Expelling Graphs}\label{sec:expelling}
In this section, we prove the following theorem. 

\begin{theorem}\label{thm:expelling}
A graph is expelling if and only if it is bipartite. 
\end{theorem}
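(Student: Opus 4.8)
The plan is to work entirely with the combinatorial model of \cref{prop:main}, reducing the theorem to a statement about the winding vectors $\vec{\mathfrak{w}}_{\mathcal{O}}$ of orbits of $\Theta_G$. The first step is to record what the beam is doing on a single straight segment. I would show that during any maximal straight portion of a trajectory (i.e., between two consecutive refractions), the velocity is $\epsilon\,\delta^{(c)}$, where $c$ is the current coin vertex and $\epsilon$ is the current orientation. Indeed, the velocity recorded in \cref{sec:prelim} is $\epsilon\,\delta^{(i+\frac12(1-\epsilon))}u$; since translations act trivially on directions, the linear action of $\overline u$ sends this to $\epsilon\,\delta^{(\overline u^{-1}(i+\frac12(1-\epsilon)))}=\epsilon\,\delta^{(c)}$, because the coin sits on $v^{-1}(i+\frac12(1-\epsilon))$. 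A refraction occurs exactly when the coin steps along an edge of $G$, and each refraction flips $\epsilon$. Hence the coin performs a walk $c_0,c_1,\ldots$ in $G$, the orientation on the $k$-th segment is $\epsilon_k=\epsilon_0(-1)^k$, and the net displacement of the beam over one period equals $\sum_k \Delta t_k\,\epsilon_k\,\delta^{(c_k)}$, where $\Delta t_k>0$ is the length of the $k$-th segment. Since this net displacement is $-\vec{\mathfrak{w}}_{\mathcal{O}}$ and the only linear relation among $\delta^{(1)},\ldots,\delta^{(n)}$ is $\sum_r\delta^{(r)}=0$, the orbit is contractible if and only if the quantities $T_r:=\sum_{k:\,c_k=r}\epsilon_k\,\Delta t_k$ are all equal.

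For the direction ``bipartite $\Rightarrow$ expelling,'' I would fix a proper $2$-coloring $[n]=X\sqcup Y$ with both classes nonempty and consider the linear functional $\phi(\gamma)=\sum_{a\in X}\gamma_a-\sum_{a\in Y}\gamma_a$ on $V$. Because the coin moves along edges, its walk alternates between $X$ and $Y$, so $c_k\in X$ precisely when $\epsilon_k=\epsilon_0$ (taking $c_0\in X$ without loss of generality). A direct computation gives $\phi(\delta^{(c)})=-n+d$ when $c\in X$ and $\phi(\delta^{(c)})=n+d$ when $c\in Y$, where $d=|X|-|Y|$ satisfies $|d|\le n-2$. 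Therefore $\phi$ applied to the velocity equals $\epsilon_0(-n+d)$ or $\epsilon_0(-n-d)$ according to whether the coin lies in $X$ or $Y$; both values are nonzero and have sign $-\epsilon_0$. Thus $\phi$ is strictly monotonic along every beam, so no beam is periodic (equivalently, bounded), and \cref{prop:main} shows $G$ is expelling. Equivalently, in terms of $T_r$: every visited vertex of $X$ has $T_r$ of sign $\epsilon_0$ and every visited vertex of $Y$ has $T_r$ of sign $-\epsilon_0$, so the $T_r$ cannot all be equal.

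For the converse, ``non-bipartite $\Rightarrow$ not expelling,'' it suffices to exhibit a single contractible orbit, and here I would use an odd cycle. Choosing a shortest odd cycle $C=(a_0,a_1,\ldots,a_{2m})$ guarantees that it is induced, which makes the refraction/window pattern among its vertices predictable. I would construct an orbit whose coin circulates twice around $C$: since $C$ has odd length, the orientation $\epsilon$ is reversed between the first and second circuits, so each $a_j$ is visited once with each sign, and if the two circuits are congruent (equal segment lengths) then $T_{a_j}=0$ for every $j$ while $T_r=0$ for all unvisited $r$, giving $\vec{\mathfrak{w}}_{\mathcal{O}}=0$. I expect the main obstacle to be making this circulation rigorous: one must choose the initial stone diagram (placing the replicas of $C$ on a block of consecutive vertices of $\Cycle_n$ and parking the remaining replicas) so that the beam genuinely follows $C$, control the motion of the stone forced by the windows at the non-edges of the induced cycle, and verify that the second circuit is the exact mirror of the first so that the segment lengths cancel in pairs. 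This bookkeeping, rather than any conceptual difficulty, is the crux of the converse; the functional $\phi$ of the previous paragraph also explains \emph{why} such a construction can succeed, since around an odd cycle the sign $\epsilon$ cannot be tied consistently to a $2$-coloring, leaving the monotone drift free to cancel.
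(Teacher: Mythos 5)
Your reduction via \cref{prop:main} and your proof of the forward direction are sound. The monotone functional $\phi(\gamma)=\sum_{a\in X}\gamma_a-\sum_{a\in Y}\gamma_a$ is a correct alternative to the paper's argument for \cref{prop:expelling}: the paper instead notes that the stone's orientation is determined by which side of the bipartition the coin occupies, so that whenever the replicas of adjacent vertices $x\in X$ and $y\in Y$ swap, $\mathbf{x}$ moves clockwise and $\mathbf{y}$ counterclockwise, forcing $\mathfrak{w}_{\mathcal{O}}(x)\neq\mathfrak{w}_{\mathcal{O}}(y)$. Your functional packages the same underlying observation as a strict drift along the lifted beam; both work, and yours handles the case of a coin that never moves uniformly, which the paper treats separately.

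The genuine gap is the converse. Everything you label ``bookkeeping'' is in fact the entire content of that direction, and the specific mechanism you propose is not merely unverified but doubtful as stated. You need (i) that from a suitable initial diagram the coin really does circulate around the induced odd cycle $C$, (ii) that the orbit closes up after exactly two circuits (or an even number you can pair off consecutively), and (iii) that paired circuits have matching segment lengths so that the contributions $\epsilon_k\Delta t_k$ to each $T_{a_j}$ cancel. Point (iii) is the sticking point: after one circuit the replica configuration is generally \emph{not} a mirror or rigid copy of the initial one, so there is no reason the second circuit's segment lengths match the first's, and the period may comprise many circuits. This is visible in the paper's own proof of \cref{prop:expelling2}: the ``pleasant'' diagrams reached after successive circuits satisfy only \emph{set} equalities on the positions of the odd- and even-indexed replicas of $C$ (which replica sits where within those sets can get permuted), so no mirror symmetry is available. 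The paper's argument therefore avoids lengths and periods entirely: it shows that in passing from one pleasant diagram to the next, no two replicas ever swap across the fixed edge $\{m,m+1\}$ of $\Cycle_n$; since this persists forever, no replica can complete a revolution, hence every winding number is $0$ and the orbit is contractible. Some such structural barrier (rather than a hoped-for matching of segment lengths) is what your sketch is missing; without it, your converse is a plan, not a proof.
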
 

We split the proof into two propositions. 

\begin{proposition}\label{prop:expelling}
Every bipartite graph is expelling. 
\end{proposition}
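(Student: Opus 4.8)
The plan is to show that for every bipartite $G$, every lift of every trajectory to the Euclidean space $V$ is unbounded; by the remark following \cref{def:ensnaring}, this is exactly the assertion that $G$ is expelling. The engine of the argument is a quantity that is conserved along each beam, together with a linear functional on $V$ that this invariant forces to be strictly monotonic.

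First I would pin down how the direction of a beam changes at a crossing. Every beam travels in a direction $\sigma\delta^{(\rho)}$ for some sign $\sigma\in\{\pm1\}$ and index $\rho\in[n]$, and any transversal crossing of $\HH_{i,j}^k$ forces $\rho\in\{i,j\}$, since $\langle\delta^{(\rho)},e_i-e_j\rangle=n([j{=}\rho]-[i{=}\rho])$ must be nonzero. A direct computation with the refraction rule (coefficient $-1$) shows that refracting $\delta^{(i)}$ through $\HH_{i,j}^k$ produces $-\delta^{(j)}$: at a metalens the pair $(\rho,\sigma)$ becomes $(\rho',-\sigma)$, where $\rho'$ is the other endpoint of the edge $\{i,j\}\in E$, whereas at a window $(\rho,\sigma)$ is unchanged. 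Thus $\rho$ traces a walk in $G$ that steps along an edge exactly when the beam refracts (this matches the coin walk of the combinatorial model, since one checks $\rho$ is the vertex carrying the coin).

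The crux is the resulting invariant. Fix a proper $2$-coloring of $G$, encoded as a sign vector $\kappa\in\{\pm1\}^n$ with $\kappa_i=-\kappa_j$ for every $\{i,j\}\in E$, and choose it so that both color classes are nonempty (possible for any graph with $n\geq 2$). Because $\rho$ changes only along edges, where $\kappa$ flips, and $\sigma$ flips at exactly those same moments, the quantity $\chi\coloneqq\sigma\kappa_\rho$ is constant along the entire beam. Now consider $f(\gamma)=\langle\kappa,\gamma\rangle$ on $V$. Between crossings the position moves with velocity $\sigma\delta^{(\rho)}$, so $\tfrac{d}{dt}f=\sigma\langle\kappa,\delta^{(\rho)}\rangle=\sigma(S-n\kappa_\rho)=\sigma S-n\chi$, where $S=\sum_k\kappa_k$. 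Since both color classes are nonempty, $|S|\leq n-2<n$, hence $|\sigma S|<|n\chi|=n$ and $\tfrac{d}{dt}f$ has constant sign equal to $-\operatorname{sign}(\chi)$ throughout, with $|\tfrac{d}{dt}f|\geq n-|S|\geq 2$.

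Therefore $f$ is strictly monotonic along the trajectory, so the beam never revisits a state and is not periodic; by the dichotomy (bounded $\Leftrightarrow$ periodic) recorded in the setup, it is unbounded. As this holds for every lift, every trajectory in $\Traj_G$ is non-contractible, so $G$ is expelling. Equivalently, combining with $\BB u_P=\BB u_0-\vec{\mathfrak w}_{\mathcal O}$, the per-period change of $f$ equals $-\langle\kappa,\vec{\mathfrak w}_{\mathcal O}\rangle$, which is nonzero, so $\vec{\mathfrak w}_{\mathcal O}\neq 0$ and \cref{prop:main} applies. I expect the main obstacle to be the sign-constancy step: the balanced case $S=0$ is immediate, but unbalanced bipartite graphs genuinely require the strict inequality $|S|<n$, which is precisely what makes the $-n\chi$ term dominate $\sigma S$; the other point needing care is the verification of the direction-transformation rule $\delta^{(i)}\mapsto-\delta^{(j)}$ at a refraction.
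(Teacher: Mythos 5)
Your proof is correct, and it takes a genuinely different route from the paper's. Both arguments rest on the same underlying parity invariant, but you run it upstairs in $V$ while the paper runs it in the toric combinatorial model. Your computational pillars check out: reflection through $\HH_{i,j}^k$ sends $\delta^{(i)}\mapsto\delta^{(i)}+n(e_i-e_j)=\delta^{(j)}$, so refraction sends $\delta^{(i)}\mapsto-\delta^{(j)}$; transversality of a crossing forces $\rho\in\{i,j\}$; hence $\chi=\sigma\kappa_\rho$ is conserved, and $\tfrac{d}{dt}\langle\kappa,\gamma\rangle=\sigma S-n\chi$ has constant sign and magnitude at least $n-|S|\geq 2$, so every lift drifts linearly and is unbounded, which is exactly the characterization of expelling recorded in the remark following \cref{def:ensnaring}. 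The paper instead phrases the invariant as ``the stone points clockwise exactly when the coin sits on a vertex of $X$'' and concludes combinatorially: whenever the replicas of adjacent vertices $x\in X$ and $y\in Y$ swap, $\mathbf{x}$ must move clockwise and $\mathbf{y}$ counterclockwise, so $\mathfrak{w}_{\mathcal O}(x)\neq\mathfrak{w}_{\mathcal O}(y)$ and \cref{prop:main} applies, with a separate (easy) case when the coin sits on an isolated vertex. Your version buys uniformity (no case split, since monotonicity holds even if the beam never refracts), an explicit drift rate, the slightly stronger conclusion $\langle\kappa,\vec{\mathfrak{w}}_{\mathcal O}\rangle\neq 0$ for every orbit, and direct coverage of $n=2$ (stone diagrams are only defined for $n\geq 3$, so the paper leans on its earlier remarks there). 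The paper's version buys the convenience that the refraction rule never has to be verified geometrically---it is built into the definition of $\Theta_G$---and it stays entirely inside the stone-diagram framework used throughout the rest of the paper.
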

\begin{proof}
Let $G$ be a bipartite graph, and let $X\sqcup Y$ be a bipartition of its vertex set. Consider an orbit $\mathcal O$ of $\Theta_{G}$ starting with a stone diagram $D$. Without loss of generality, assume the stone points clockwise in $D$. In $D$, the stone coexists with the replica ${\bf x}$ of some vertex $x$; without loss of generality, assume $x\in X$. If $x$ has no neighbors in $G$, then the coin never leaves $x$, so it follows that the winding number of $x$ is nonzero. Now suppose $x$ has at least one neighbor in $G$. At some time during the orbit, the coin must move from $x$ to a vertex $y\in Y$. 

Whenever the coin crosses an edge, the stone reverses its orientation. This implies that the stone points clockwise whenever it sits on a vertex in $X$ and points counterclockwise whenever it sits on a vertex in $Y$. In particular, whenever the replicas ${\bf x}$ and ${\bf y}$ swap places, ${\bf x}$ must move clockwise while ${\bf y}$ moves counterclockwise. This immediately implies that $x$ and $y$ have different winding numbers, so the winding vector of $\mathcal O$ cannot be $0$. 
\end{proof}

\begin{proposition}\label{prop:expelling2}
Every expelling graph is bipartite. 
\end{proposition}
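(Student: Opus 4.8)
The plan is to prove the contrapositive: if $G$ is not bipartite, then $G$ is not expelling. By \cref{prop:main}, it suffices to exhibit a single orbit $\mathcal O$ of $\Theta_G$ whose winding vector $\vec{\mathfrak w}_{\mathcal O}$ equals $0$. A graph fails to be bipartite exactly when it contains an odd cycle, so I would fix an odd cycle $C$ in $G$ with vertices $c_1,\ldots,c_{2m+1}$ (indices read cyclically) and construct a contractible orbit ``supported on'' $C$.

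The guiding idea comes from the stone-and-coin model of \cref{sec:prelim}. Recall that one application of $\Theta_G$ swaps the two replicas occupying positions $i$ and $i+1$ of $\Cycle_n$, and that the stone \emph{translates} along $\Cycle_n$ only when this is a non-edge swap, whereas an edge swap merely reverses the stone's orientation while leaving it in place. My aim is to choose the initial stone diagram so that the coin never leaves $C$: in such a \emph{confined} orbit the replicas $\mathbf c_1,\ldots,\mathbf c_{2m+1}$ stay within a fixed arc of consecutive positions of $\Cycle_n$, while every other replica remains frozen. Since a replica confined to a proper sub-arc can never complete a net revolution around $\Cycle_n$ (it would have to pass the frozen replicas), each winding number $\mathfrak w_{\mathcal O}(a)$ is forced to vanish, giving $\vec{\mathfrak w}_{\mathcal O}=0$.

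I would first carry this out in the clean base case where $C$ is a triangle. Placing $\mathbf c_1,\mathbf c_2,\mathbf c_3$ on three consecutive positions $p-1,p,p+1$ and all remaining replicas elsewhere, every swap that can occur among these three positions is between two replicas whose vertices are pairwise adjacent in $G$; hence every such swap is an edge swap, the stone never translates out of the three-position window, and the three replicas simply permute among themselves until the finite orbit closes up. The orbit is therefore confined, its winding vector is $0$, and $G$ is not expelling. In particular this already settles every graph that contains a triangle.

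The hard part will be the general (triangle-free) odd cycle. As the dance proceeds, two cycle-vertices that are \emph{not} adjacent in $G$ (such as $c_1$ and $c_3$) can be brought to adjacent positions of $\Cycle_n$, producing a non-edge swap that lets the stone translate; it might then reach the boundary of the arc, swap a cycle-replica with a frozen outside replica, and destroy confinement. Overcoming this is the crux. I would attack it by choosing the arc and the initial orientation carefully and then proving a boundary invariant guaranteeing that the stone can never cross the two ends of the arc (equivalently, that the coin never leaves $C$); the parity forced by the \emph{odd} length of $C$ is precisely what should make such an invariant consistent, mirroring the way the even/odd dichotomy drove the bipartite argument of \cref{prop:expelling}. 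As an alternative finish, one can try to build the orbit so that it is stabilized (as a set) by an orientation-reversing reflection of $\Cycle_n$ that also preserves the replica arrangement; such a symmetry negates the winding vector, so a self-symmetric confined orbit would be forced to have winding vector $0$ without any step-by-step tracking. Either route reduces the proof to verifying one combinatorial invariant for a concretely specified orbit.
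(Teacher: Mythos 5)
Your overall strategy coincides with the paper's: prove the contrapositive by locating an odd cycle and exhibiting one orbit whose swaps stay confined away from a fixed edge of $\Cycle_n$, so that every winding number vanishes and \cref{prop:main} applies. Your triangle case is correct (it is the same argument that proves \cref{thm:complete}: with the stone started on the middle of the three occupied positions, every swap it makes is an edge swap, so the stone never translates and nothing leaves the three-position window). But for the case that actually requires work --- an induced odd cycle of length at least $5$ --- you have not given a proof. You correctly identify the obstruction (non-adjacent cycle vertices such as $c_1$ and $c_3$ can meet and produce a non-edge swap that translates the stone toward the ends of the arc), but then you only announce that you would ``choose the arc and the initial orientation carefully and prove a boundary invariant,'' or alternatively find a reflection-symmetric orbit; neither the configuration nor the invariant is exhibited, and the symmetry variant is not shown to produce any actual orbit. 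That missing configuration is precisely the content of the paper's proof: it starts from a \emph{pleasant} diagram, in which $\mathbf a_1$ sits at position $0$, the even-indexed replicas $\mathbf a_2,\mathbf a_4,\ldots,\mathbf a_{2m}$ occupy the positions $\epsilon,2\epsilon,\ldots,m\epsilon$, and the odd-indexed replicas $\mathbf a_3,\mathbf a_5,\ldots,\mathbf a_{2m+1}$ occupy the positions $-\epsilon,-2\epsilon,\ldots,-m\epsilon$; it then verifies that the resulting ping-pong dynamics (the stone carries $\mathbf a_1$ until it hits $\mathbf a_2$, reverses, carries $\mathbf a_2$ until it hits $\mathbf a_3$, and so on around the cycle) leads to another pleasant diagram without any swap ever occurring along the edge $\{m,m+1\}$ of $\Cycle_n$, after which the argument repeats forever. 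This explicit construction-plus-verification is the heart of the proposition, and it is absent from your proposal. (A smaller fix: you should take a \emph{shortest} odd cycle, hence an induced one, since all your non-adjacency claims fail if the cycle has chords.)

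A secondary error is your parenthetical assertion that the stone never crossing the ends of the arc is ``equivalent'' to the coin never leaving $C$. Coin-confinement does not imply arc-confinement: for the cycle graph itself, the coin trivially never leaves the cycle, yet the stone and the replicas travel all the way around $\Cycle_n$, which is why \cref{thm:cycles} must argue via the parity of the number of full revolutions rather than via confinement. So the boundary invariant you need genuinely depends on the initial arrangement of the replicas, not merely on which vertices carry the coin, and this is exactly the dependence your proposal leaves unresolved.
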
 

\begin{proof}
Let $G$ be a graph that is not bipartite. Then $G$ has an induced odd cycle $C$. Let $a_1,\ldots,a_{2m+1}$ be the vertices of $C$ listed in the order they appear around the cycle. We want to construct a periodic orbit.
Say a stone diagram ${(w,i,\epsilon)\in\Xi_n}$ is \dfn{pleasant} if $w(a_1)=0$ and we have the set equalities \[\{w(a_{2j+1}):1\leq j\leq m\}=\{-\epsilon j:1\leq j\leq m\}\quad\text{and}\quad\{w(a_{2j}):1\leq j\leq m\}=\{\epsilon j:1\leq j\leq m\},\] where we view $w$ as a bijection from $[n]$ to $\ZZ/n\ZZ$. 

Start with a pleasant stone diagram $D=(w,i,\epsilon)$, and consider iteratively applying $\Theta_{G}$. The stone travels with ${\bf a}_1$ until it hits ${\bf a}_2$. The stone then reverses direction and travels with ${\bf a}_2$ until hitting ${\bf a}_3$. Then it reverses direction again and travels with ${\bf a}_3$ until hitting ${\bf a}_4$. This process continues until eventually the stone travels with ${\bf a}_{2m+1}$ until hitting ${\bf a}_1$. At this moment, the replicas ${\bf a}_{2j}$ for $1\leq j\leq m$ occupy the positions $-\epsilon,-2\epsilon,\ldots,-m\epsilon$, and the stone coexists with ${\bf a}_1$ and has orientation $-\epsilon$. If we apply $\Theta_{G}$ a few more times, then the stone will travel with ${\bf a}_1$ back to position $0$. Let $D'$ be the stone diagram at this moment when the stone and ${\bf a}_1$ reach position $0$. It is straightforward to show that $D'$ is pleasant. Note that throughout this process starting at $D$ and ending at $D'$, there is never a moment when two replicas swap along the edge $\{m,m+1\}$ of $\Cycle_n$. (See \cref{fig:pleasant}.) We can now repeat this same argument starting at $D'$; we will eventually reach another pleasant stone diagram $D''$ through a process in which no replicas ever swap along the edge $\{m,m+1\}$. Continuing in this manner, we find that we will never swap two replicas along the edge $\{m,m+1\}$. This implies that the winding vector of the orbit of $\Theta_{G}$ containing $D$ is $0$. Hence, $G$ is not expelling. 
\end{proof} 

\begin{example}
Let us illustrate the proof of \cref{prop:expelling2} with the graph 
\[G=\begin{array}{l}
\includegraphics[height=1.653cm]{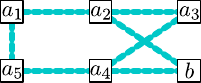}\end{array}.\] Then $n=6$ and $m=2$. Let $D=(w,6,1)$ be the stone diagram at the top left of \cref{fig:pleasant}. Then $D$ is pleasant because $w(a_1)=0$ and we have $\{w(a_3),w(a_5)\}=\{4,5\}=\{-1,-2\}$ and $\{w(a_2),w(a_4)\}=\{1,2\}$ (taking numbers modulo $6$). After $11$ applications of $\Theta_{G}$, we reach the stone diagram at the bottom right of \cref{fig:pleasant}, which is also pleasant. Note that none of these $11$ applications of $\Theta_{G}$ involves swapping two replicas along the edge $\{2,3\}$ of $\Cycle_6$. 
\end{example}

\begin{figure}[]
  \begin{center}
  \includegraphics[width=\linewidth]{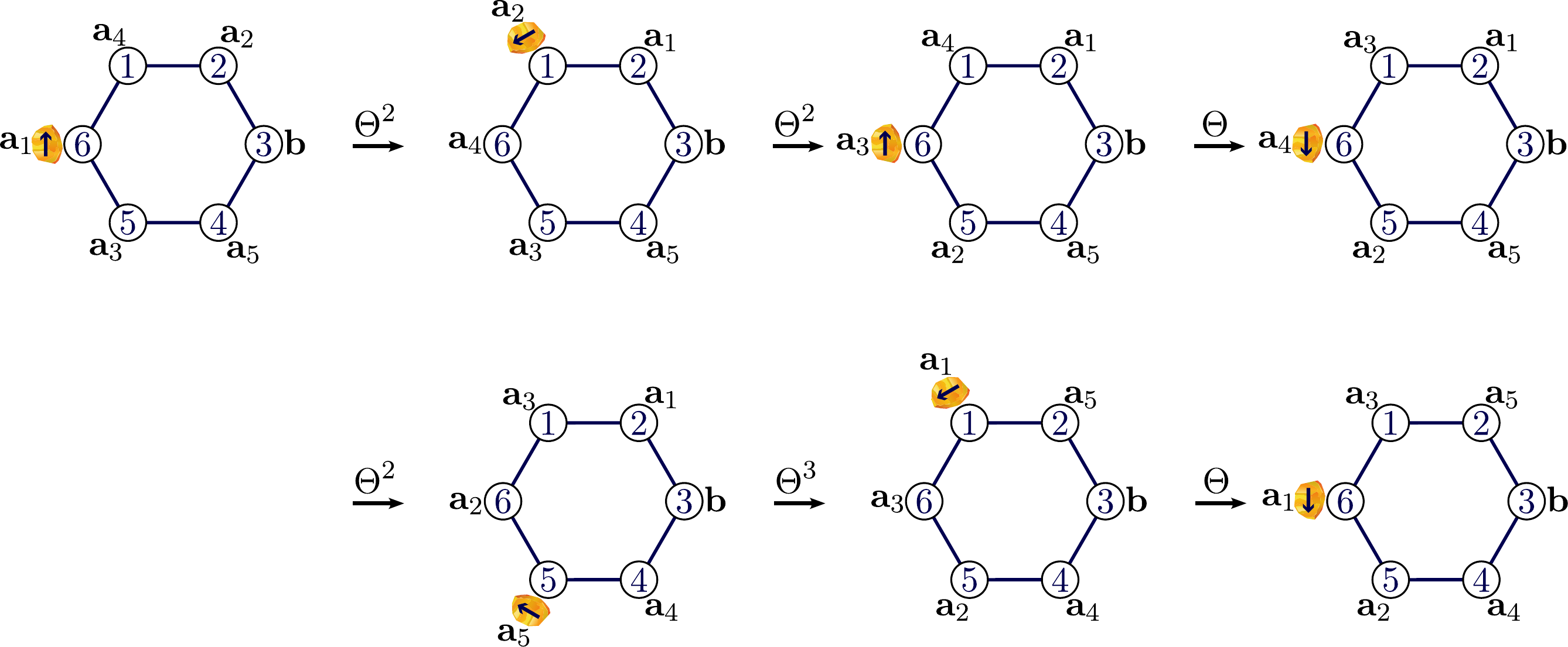}
  \end{center}
\caption{A sequence transforming one pleasant stone diagram into another.   }\label{fig:pleasant}  
\end{figure}

\section{Complete Graphs and Cycles}\label{sec:completecycle}
Our goal in this section is to exhibit two elementary families of ensnaring graphs: complete graphs and odd cycles. In what follows, we denote the $n$-vertex cycle graph by $C_n$; this is to avoid confusion with our notation $\Cycle_n$, which we reserve for stone diagrams. We also use the standard notation $K_n$ for the complete graph with $n$ vertices. 

\begin{theorem}\label{thm:complete}
For every $n\geq 3$, the complete graph $K_n$ is ensnaring.
\end{theorem}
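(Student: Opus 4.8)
The plan is to invoke the combinatorial reformulation of \cref{prop:main}: it suffices to show that every orbit $\mathcal{O}$ of $\Theta_{K_n}$ has winding vector $\vec{\mathfrak{w}}_{\mathcal{O}}=0$. The decisive simplification in the complete-graph case is that every pair of vertices is adjacent, so the condition $\{v^{-1}(i),v^{-1}(i+1)\}\in E$ holds at every step; thus $\Theta_{K_n}$ always executes the second branch of its definition, $\Theta_{K_n}(v,i,\epsilon)=(\overline{s_i}v,i-\epsilon,-\epsilon)$. I would begin by recording this and translating it into stone-diagram language.

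First I would show that the stone never moves. A one-line computation gives that the new stone position $(i-\epsilon)+\tfrac12(1-(-\epsilon))$ equals the old position $i+\tfrac12(1-\epsilon)$; equivalently, in stone-diagram terms, every application of $\Theta_{K_n}$ reverses the orientation of the stone and leaves it fixed on some vertex $j$ of $\Cycle_n$, precisely because the two replicas being swapped always correspond to adjacent vertices of $K_n$. Consequently the orientation alternates $+1,-1,+1,\dots$ throughout the orbit, and the swaps performed alternate between the position pairs $\{j,j+1\}$ (when $\epsilon=+1$) and $\{j-1,j\}$ (when $\epsilon=-1$).

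The key structural fact to extract is then that throughout the entire orbit only the three positions $j-1,j,j+1$ of $\Cycle_n$ are ever touched, and moreover replicas only ever hop along the two edges $\{j-1,j\}$ and $\{j,j+1\}$ of $\Cycle_n$, never directly between $j-1$ and $j+1$. Hence every replica initially outside $\{j-1,j,j+1\}$ stays put and has winding number $0$, while each replica that does move is confined to the path $j-1 - j - j+1$. I would then lift the trajectory of such a replica to the universal cover $\ZZ$ of $\Cycle_n$: since no step ever crosses between $j+1$ and $j-1$, the lift remains in a bounded interval, and since the orbit is periodic the replica returns to its starting position, forcing its net number of clockwise steps, and hence its winding number, to be $0$. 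Therefore $\vec{\mathfrak{w}}_{\mathcal{O}}=0$, and \cref{prop:main} yields that $K_n$ is ensnaring.

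I expect the only genuinely delicate point to be the edge case $n=3$, where $\{j-1,j,j+1\}$ exhausts all of $\ZZ/n\ZZ$ and the three positions wrap around the whole cycle; one must note that even then the two missing adjacencies forbid a replica from stepping directly from $j+1$ to $j-1$, so the ``path, not cycle'' confinement—and thus the lifting argument—still applies unchanged. Carrying out the short orbit computation, one finds that every orbit has size exactly $6$, with the underlying permutation evolving by powers of the $3$-cycle $\overline{s_{j-1}}\,\overline{s_j}$ supported on $\{j-1,j,j+1\}$; this is a convenient sanity check but is not needed for the winding-number argument. Everything else is a direct unwinding of the definition of $\Theta_{K_n}$.
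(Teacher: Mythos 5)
Your proposal is correct and takes essentially the same approach as the paper: observe that in $K_n$ every step refracts, so the stone never moves, hence replicas only swap along the two edges of $\Cycle_n$ incident to the stone's position, and since $n\geq 3$ some edge of $\Cycle_n$ is never crossed, forcing all winding numbers to be $0$. Your lifting argument and the explicit $n=3$ check simply spell out details that the paper's proof leaves implicit.
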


\begin{proof}
Consider an orbit $\mathcal O$ of $\Theta_{K_n}$ containing a stone diagram $D$. Suppose the stone sits on the vertex $i$ of $\Cycle_n$ in the diagram $D$. As we iteratively apply $\Theta_{K_n}$, the stone never leaves position $i$. Thus, in the stone diagrams we encounter, replicas will only swap among positions $i$ and $i\pm 1$. Assuming $n\ge 3$, no replica will ever swap along the edge $\{i-2,i-1\}$. It follows that $\mathcal O$ is contractible. As $\mathcal O$ was arbitrary, $K_n$ is ensnaring. 
\end{proof}

\begin{theorem}\label{thm:cycles}
The cycle graph $C_n$ is ensnaring if and only if $n$ is odd. 
\end{theorem}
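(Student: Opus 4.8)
The plan is to characterize the winding vectors of orbits of $\Theta_{C_n}$ using the cyclic structure of the cycle graph, and then split into the two parity cases. Throughout, I label the vertices of $C_n$ as $1, 2, \ldots, n$ in cyclic order, matching the labeling of $\Cycle_n$, so that $\{a, a+1\}$ is an edge of $C_n$ for every $a \in \ZZ/n\ZZ$. The key simplifying feature of this case is that the graph $C_n$ and the abstract cycle $\Cycle_n$ coincide combinatorially: a coin sitting on vertex $a$ can move (under an application of $\Theta_{C_n}$) precisely when the stone is poised to swap replica $\mathbf{a}$ across one of the two edges of $C_n$ incident to $a$. I would first analyze the coin dynamics: the coin performs a walk on $C_n$, and because every pair of cyclically adjacent vertices is an edge, the coin moves whenever the stone points toward an occupied adjacent position. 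I expect that the coin, the stone, and the replica it coexists with stay locked together and traverse $\Cycle_n$, with the orientation of the stone flipping each time the coin crosses an edge of $C_n$.

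The heart of the argument is a parity invariant analogous to the one used in \cref{prop:expelling}. When $n$ is even, $C_n$ is bipartite, so by \cref{prop:expelling} it is expelling and hence (for $n \geq 3$, where it has more than one vertex and is not the empty graph) \emph{not} ensnaring; this disposes of the even case immediately, showing $C_n$ is not ensnaring when $n$ is even. So the substantive content is the odd case, where I must prove that every orbit $\mathcal{O}$ of $\Theta_{C_n}$ has winding vector $\vec{\mathfrak w}_{\mathcal O} = 0$. Here I would track the motion of the coin together with the orientation $\epsilon$ of the stone. Each time the coin crosses an edge of $C_n$ (equivalently, moves one step around $\Cycle_n$), the orientation reverses. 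Following a single orbit, the coin executes a closed walk on the cycle $C_n$; I want to show that the net number of clockwise steps each individual replica makes around $\Cycle_n$ is zero.

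The main obstacle, and the crux of the odd case, is to show that the stone's orientation and position force the coin's closed walk to have zero net winding around $C_n$, which in turn forces every replica's winding number to vanish. The strategy I would pursue: because $n$ is odd, the coin cannot wind all the way around $C_n$ while respecting the orientation-flip rule, since a single full clockwise revolution around the $n$-cycle would require an odd number of edge-crossings and hence an inconsistent return orientation. More precisely, I would set up a $\ZZ/2\ZZ$-valued invariant combining the coin's position parity on the cycle with the stone's orientation $\epsilon$; a net full revolution of the coin would violate its conservation because $n$ is odd. Once the coin's walk has zero net winding, I would argue that the replicas that are swapped with the stone do so an equal number of times clockwise as counterclockwise, giving $\mathfrak w_{\mathcal O}(\mathbf a) = 0$ for every replica. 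Replicas never touched by the stone obviously have winding number zero, so $\vec{\mathfrak w}_{\mathcal O} = 0$ and \cref{prop:main} finishes the proof.

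An alternative route for the odd case, which I would keep in reserve in case the orientation-parity bookkeeping becomes delicate, is a direct lift to the Euclidean picture: I would lift a pleasant-style configuration (in the spirit of \cref{prop:expelling2}) and show that the beam of light in $V$ stays bounded, so that the corresponding toric trajectory is contractible. However, the challenge there is that \cref{prop:expelling2} only needs \emph{one} contractible orbit to conclude a graph is not expelling, whereas to prove $C_n$ is ensnaring I need \emph{every} orbit to be contractible; thus the combinatorial invariant approach, which treats an arbitrary orbit uniformly, is the cleaner path and the one I would carry out in full.
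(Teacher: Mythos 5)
Your even case is fine and matches the paper. But the odd case contains a genuine error: your central claim --- that ``the coin cannot wind all the way around $C_n$'' and hence that the coin's closed walk has zero net winding --- is false, and in fact the true dynamics are the opposite. In any orbit of $\Theta_{C_n}$, once the coin moves from a vertex $a$ to a neighbor $b$, the replica $\mathbf a$ sits directly behind the stone (the stone reversed orientation at the swap), so the stone is guaranteed to reach the replica of the \emph{other} neighbor of $b$ before it ever returns to $\mathbf a$. Consequently the coin circulates monotonically in one direction around $C_n$ and never backtracks. There is no inconsistency in the coin completing a full revolution: the state of the system is the entire stone diagram, not just the coin's position and the stone's orientation, and after one revolution the coin simply returns to its starting vertex with the stone's orientation reversed (this is where oddness of $n$ enters). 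The correct use of parity is therefore not an obstruction to winding but a constraint on its count: since the orbit is periodic and each full revolution of the coin flips the stone's orientation, the coin must complete an \emph{even} number of full revolutions per period. Then, for a fixed adjacent pair $a,b$, consecutive swaps of $\mathbf a$ past $\mathbf b$ occur with alternating stone orientations, hence in alternating directions around $\Cycle_n$, so they cancel and $a$ and $b$ have equal winding numbers; all winding numbers are equal, their sum is $0$, and \cref{prop:main} concludes. Your proposed $\ZZ/2\ZZ$ invariant cannot exist in the form you describe, precisely because an odd cycle admits no consistent position parity; any argument forcing zero coin winding would contradict the actual orbits.

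A secondary conceptual slip: you assert that $C_n$ and $\Cycle_n$ ``coincide combinatorially'' so that the coin and stone stay locked together. This holds only for special initial configurations; in a general stone diagram the replicas are placed on $\Cycle_n$ by an arbitrary permutation, so the replica of the coin's vertex and its neighbors' replicas can be far apart on $\Cycle_n$. Between coin moves the stone drags one replica past many non-adjacent replicas, and those intermediate swaps are exactly what the winding-number bookkeeping must control; identifying the two cycles hides this and makes the zero-winding claim look more plausible than it is.
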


\begin{proof}
If $n$ is even, then $C_n$ is bipartite, so it is expelling (and hence, not ensnaring) by \cref{thm:expelling}. 

Suppose $n$ is odd, and let $a$ and $b$ be two adjacent vertices of $C_n$ with replicas $\mathbf a$ and $\mathbf b$. Consider an orbit $\mathcal O$ of $\Theta_{C_n}$. Observe that after the coin moves from $a$ to $b$, the replica $\mathbf a$ will be directly behind the stone, so the stone is guaranteed to reach the replica of the other neighbor of $b$ before it reaches $\mathbf a$. Thus, the coin will next move from $b$ to this other neighbor. Repeating this argument, we conclude the coin only moves in one direction around the cycle. Every time the coin moves, the stone reverses orientation. Because $n$ is odd, after the coin completes a full revolution around $C_n$, the stone's orientation is reversed. Thus, in the orbit $\mathcal O$, the coin must complete an even number of revolutions around $C_n$ so that the stone returns to its original orientation.

Consider the steps in the orbit when $\mathbf a$ and $\mathbf b$ swap. Between two consecutive such steps, the coin completes exactly one revolution around $C_n$, so the stone's orientation in these steps is alternating. As the coin completes an even number of revolutions around $C_n$ in the orbit, $\mathbf a$ swaps an equal number of times in each direction (clockwise or counterclockwise) past $\mathbf b$. This implies $a$ and $b$ have equal winding number. As this is true for any adjacent vertices $a$ and $b$, all vertices have equal winding number. The sum of winding numbers is $0$, so all vertices have winding number $0$. This shows that $\mathcal O$ is contractible. As $\mathcal O$ was arbitrary, $C_n$ is ensnaring.
\end{proof}

\section{Graph Operations on Ensnaring Graphs}\label{sec:wedgeunion}
Given two graphs $G_1$ and $G_2$, a vertex $v_1$ of $G_1$, and a vertex $v_2$ of $G_2$, we define the \dfn{wedge} of $G_1$ and $G_2$ at vertices $v_1$ and $v_2$ to be the graph that results from gluing $v_1$ and $v_2$ together.
In this wedge, the vertices of $G_1$ and of $G_2$ behave independently in a sense: in the stone diagram, the replicas of vertices in $G_1$ behave as if $G_2$ were not there, and vice versa. To formalize this, we need an additional definition. Define the \dfn{conjugate} of a stone diagram $D$ to be the stone diagram $D^\top$ obtained from $D$ by reversing the orientation of the stone (keeping the position of the stone and the positions of the replicas the same). 

\begin{lemma}\label{thm:conjugates}
For any $n$-vertex graph $G$ and any stone diagram $D\in\Xi_n$, we have \[\Theta_{G}(D)^\top=\Theta_{G}^{-1}(D^\top).\]
\end{lemma}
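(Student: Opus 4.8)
The plan is to verify the identity directly from the defining formula for $\Theta_G$, working in the coordinates $(v,i,\epsilon)\in\Xi_n$. Since $\Theta_G$ is a bijection, the claimed equality $\Theta_G(D)^\top=\Theta_G^{-1}(D^\top)$ is equivalent to $\Theta_G\bigl(\Theta_G(D)^\top\bigr)=D^\top$, and I would prove this latter form so as to avoid ever computing the inverse map explicitly.

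First I would record the conjugation operation in coordinates. The stone in $(v,i,\epsilon)$ sits on the vertex $i+\tfrac12(1-\epsilon)$; requiring $D^\top$ to keep this position while flipping $\epsilon$ to $-\epsilon$ forces the index to change by $\epsilon$, giving the formula $(v,i,\epsilon)^\top=(v,\,i-\epsilon,\,-\epsilon)$. This small computation is the one place where the index bookkeeping must be done with care, and getting it right is essential for everything that follows.

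The second, and conceptually central, observation is that the adjacency test appearing in the definition of $\Theta_G$ is insensitive to exactly the data that $\top$ modifies. The test $\{v^{-1}(i),v^{-1}(i+1)\}\in E$ does not involve $\epsilon$ at all, and it is unchanged when $v$ is replaced by $\overline{s}_i v$: since $\overline{s}_i$ is an involution swapping $i$ and $i+1$, one has $(\overline{s}_i v)^{-1}(i)=v^{-1}(i+1)$ and $(\overline{s}_i v)^{-1}(i+1)=v^{-1}(i)$, so the unordered pair of vertices being compared is literally the same set. This is precisely the statement that ``which two replicas are being swapped'' is a symmetric datum, unaffected by the orientation of the stone or by having just performed the swap.

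With these two facts in hand, the proof reduces to a short case check. In the refraction case $\{v^{-1}(i),v^{-1}(i+1)\}\in E$, one computes $\Theta_G(D)=(\overline{s}_i v,i-\epsilon,-\epsilon)$, hence $\Theta_G(D)^\top=(\overline{s}_i v,i,\epsilon)$; applying $\Theta_G$ once more---still in the refraction case by the invariance above---returns $(\overline{s}_i\overline{s}_i v,i-\epsilon,-\epsilon)=(v,i-\epsilon,-\epsilon)=D^\top$. The pass-through case $\{v^{-1}(i),v^{-1}(i+1)\}\notin E$ runs identically: here $\Theta_G(D)^\top=(\overline{s}_i v,i,-\epsilon)$, and a second application of $\Theta_G$ again yields $(v,i-\epsilon,-\epsilon)=D^\top$. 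I expect no real obstacle beyond the index bookkeeping of the first step; the genuine content of the lemma is the time-reversal symmetry of the billiard dynamics---running the system backward is the same as reversing the direction of travel, running forward, and reversing again---and once the adjacency test is seen to be orientation-independent and $\overline{s}_i$-invariant, this symmetry becomes immediate.
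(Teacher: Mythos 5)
Your proof is correct and follows essentially the same route as the paper's: both reduce the claim to showing $\Theta_G\bigl(\Theta_G(D)^\top\bigr)=D^\top$ and rest on the key observation that the adjacency test concerns the same unordered pair of vertices before and after the swap (and is independent of the stone's orientation). The only difference is presentational --- you verify this in explicit $(v,i,\epsilon)$-coordinates, including the formula $(v,i,\epsilon)^\top=(v,i-\epsilon,-\epsilon)$, whereas the paper argues verbally with the stone-diagram description --- and your index bookkeeping checks out in both cases.
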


\begin{proof}
Suppose that in $D$, the stone coexists with $\mathbf a$ and points toward $\mathbf b$. Then in $\Theta_{G}(D)$, the stone coexists with either $\mathbf a$ or $\mathbf b$ and points away from the other, so the stone points toward the other one in $\Theta_{G}(D)^\top$. Hence, the permutation of vertex replicas in $\Theta_{G}\left(\Theta_{G}(D)^\top\right)$ differs from that of $\Theta_{G}(D)^\top$ by a swap of $\mathbf a$ and $\mathbf b$, with the stone switching between $\mathbf a$ and $\mathbf b$ and turning around if the edge between $\mathbf a$ and $\mathbf b$ is present, and otherwise staying on $\mathbf a$ without turning. The same can be said about the permutations in $D$ and $\Theta_{G}(D)$. The conjugate diagrams $\Theta_{G}(D)$ and $\Theta_{G}(D)^\top$ differ in stone orientation only, so $D$ and $\Theta_{G}\left(\Theta_{G}(D)^\top\right)$ differ in stone orientation only as well. Thus, $\Theta_{G}(D)^\top=\Theta_{G}^{-1}\left(D^\top\right)$.
\end{proof}

With the concept of conjugate diagrams, we can analyze orbits within a wedge of graphs by looking at each wedged component separately.

\begin{theorem}\label{thm:wedge}
If $G_1$ and $G_2$ are both ensnaring graphs, then any wedge of $G_1$ and $G_2$ is ensnaring.
\end{theorem}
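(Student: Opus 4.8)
The plan is to use \cref{prop:main} and show that every orbit $\mathcal O$ of $\Theta_G$ has winding vector $0$, where $G$ is the wedge of $G_1$ and $G_2$ at a common vertex $v$. Write $V(G_1)$ and $V(G_2)$ for the vertex sets of the two wedged components, so that $V(G_1)\cap V(G_2)=\{v\}$, the induced subgraph of $G$ on $V(G_i)$ equals $G_i$, and no edge of $G$ joins $V(G_1)\setminus\{v\}$ to $V(G_2)\setminus\{v\}$. The heart of the argument is the claim that all vertices in $V(G_1)$ receive the same winding number in $\mathcal O$, and likewise all vertices in $V(G_2)$. Since $v$ lies in both sets, this forces every vertex of $G$ to share a common winding number $c$, and then the identity $\mathfrak w_{\mathcal O}(1)+\cdots+\mathfrak w_{\mathcal O}(n)=0$ gives $c=0$, so $\mathcal O$ is contractible.

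To prove the claim for $V(G_1)$, I would track only the replicas of $V(G_1)$ and delete the replicas of $V(G_2)\setminus\{v\}$, obtaining a projected configuration of $n_1=|V(G_1)|$ replicas arranged on $\Cycle_{n_1}$ together with a stone. The key structural observation is that, because no edge of $G$ joins $V(G_1)\setminus\{v\}$ to $V(G_2)\setminus\{v\}$, every swap of a $V(G_1)$-replica with a $V(G_2)\setminus\{v\}$-replica is a pass-through; such swaps carry the $V(G_1)$-replicas bodily around $\Cycle_n$ but never change their relative cyclic order. Consequently the projected configuration changes only at those steps where the stone interacts with two $V(G_1)$-replicas, and each such step is exactly an application of $\Theta_{G_1}$ (or, as explained below, of $\Theta_{G_1}^{-1}$) to the projected diagram, since $G[V(G_1)]=G_1$. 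Thus $\mathcal O$ projects to a closed walk in $\Xi_{n_1}$ whose steps are applications of $\Theta_{G_1}^{\pm1}$.

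The subtlety, and the step I expect to be the main obstacle, is bookkeeping the orientation of the stone across the intervals in which the coin has left $V(G_1)$ and is wandering inside $G_2$. During such an excursion the $V(G_1)$-replicas do not move relative to one another, so the excursion acts on the projected diagram only by possibly reversing the stone's orientation (according to the parity of the refractions it undergoes in $G_2$). This is precisely where \cref{thm:conjugates} enters: reversing the stone's orientation intertwines $\Theta_{G_1}$ with $\Theta_{G_1}^{-1}$, so an orientation-reversing excursion makes the projected walk resume by running $\Theta_{G_1}$ backward rather than forward. Either way, the projected walk uses only the steps $\Theta_{G_1}^{\pm1}$ and returns to its starting state after one full period of $\mathcal O$. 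I would formalize this by analyzing the three swap types at each step (two $V(G_1)$-replicas, two $V(G_2)$-replicas, or one of each) and verifying that entering, traversing, and exiting a $G_2$-excursion leaves the projected $G_1$-diagram unchanged up to conjugation.

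Finally, since $G_1$ is ensnaring, every orbit of $\Theta_{G_1}$ has winding vector $0$; equivalently, the $\ZZ$-valued lift of each replica's position on $\Cycle_{n_1}$ is a single-valued function on each $\Theta_{G_1}$-orbit. Applying this to the closed projected $\Theta_{G_1}^{\pm1}$-walk shows that, over one period of $\mathcal O$, the net relative displacement of any two $V(G_1)$-replicas is $0$; that is, all vertices of $V(G_1)$ have equal winding number in $\mathcal O$. The identical argument with the roles of $G_1$ and $G_2$ exchanged handles $V(G_2)$, and the claim, and hence the theorem, follows as described above.
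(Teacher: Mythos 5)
Your proposal is correct and takes essentially the same route as the paper's proof: the same projection of stone diagrams onto the $G_1$-replicas, the same use of \cref{thm:conjugates} to handle orientation reversals across the coin's excursions into $G_2$, and the same appeal to $G_1$ being ensnaring to force equal winding numbers on the vertices of $G_1$, followed by the symmetric argument for $G_2$ and the sum-to-zero identity (the paper merely phrases the key step as boundedness of $|\omega_1(\ell)-\omega_2(\ell)|$ plus a contradiction, rather than your direct single-valued-lift argument). One cosmetic slip: a swap of $\mathbf{v}$ with a replica of $V(G_2)\setminus\{v\}$ need not be a pass-through (it is a refraction exactly when an excursion begins or ends), but your excursion bookkeeping already treats those steps correctly.
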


\begin{proof}
Let $G$ be a wedge of $G_1$ and $G_2$, and let $v$ be the shared vertex between $G_1$ and $G_2$ in $G$. Let $\mathcal O$ be an orbit of $\Theta_G$ containing a stone diagram $D_0$, and let $D_i=\Theta_G^i(D_0)$. We will first prove that all vertices in $G_1$, including $v$, have the same winding number with respect to $\mathcal O$. Assume for sake of contradiction that some two vertices $v_1$ and $v_2$ in $G_1$ have different winding numbers with respect to $\mathcal O$. 

Let $n$ be the number of vertices of $G$. Given a stone diagram $D$ in which the stone coexists with a replica of a vertex in $G_1$, let $D^*$ denote the diagram obtained from $D$ by deleting all replicas of vertices not in $G_1$ (we keep the replica of $v$) and then contracting every edge of $\Cycle_n$ that is incident to a vertex that is no longer occupied by a replica. Suppose that for the diagram $D_i$, the coin is on a vertex of $G_1$. If the stone points to another replica of a vertex of $G_1$, then the same transition would occur regardless of the presence of $G_2$, so it follows that $D_{i+1}^*=\Theta_{G_1}(D_i^*)$. %On the other hand, if the stone points to a replica of a vertex not in $G_1$, then the stone either stays on the same vertex and continues in the same direction, in which case $D_{i+1}^*=D_i^*$, or the stone moves to this new vertex and $D_{i+1}^*$ is not defined at all. 
On the other hand, if the stone points to a replica $\mathbf{u}$ of a vertex $u$ that is not in $G_1$, then either the coin does not move and the stone continues in the same direction, in which case $D_{i+1}^*=D_i^*$, or the coin moves to $u$, in which case $D_{i+1}^*$ is not defined at all. 

Every time the coin moves out of or into $G_1$, it must pass through $v$. While the coin is not in $G_1$, the order of the $G_1$ replicas cannot change, as every time two replicas swap, one of them is outside $G_1$. Thus, if the coin moves out of $G_1$ in the transition from $D_i$ to $D_{i+1}$ and first moves back to $G_1$ in the transition from $D_{j-1}$ to $D_j$, then we know that the order of replicas in $D_i^*$ and $D_j^*$ is the same and that the stone coexists with the replica of $v$ in both. In this case, either $D_i^*=D_j^*$ or $D_i^*={D_j^*}^\top$. 
 
Consider $\ell\geq 1$. Let $j_1<\cdots<j_r$ be the indices $j$ between $0$ and $\ell$ such that $D_j^*$ is defined. For ${k\in\{1,2\}}$, let $\omega_k(\ell)$ be the net number of clockwise steps that the replica ${\bf v}_k$ of $v_k$ takes as we traverse the sequence $D_0,D_1,\ldots,D_\ell$ (so $\omega_k(i)$ is negative if ${\bf v}_k$ moves a net distance counterclockwise). Similarly, let $\omega_k^*(\ell)$ be the net number of clockwise steps that the replica ${\bf v}_k$ takes as we traverse the sequence $D_{j_1}^*,D_{j_2}^*,\ldots,D_{j_r}^*$.  By \cref{thm:conjugates}, for all $i$ such that $D_i^*$ is defined, either $D_i^*$ or ${D_i^*}^\top$ is in the orbit of $\Theta_{G_1}$ containing $D_0^*$, which is finite. As $G_1$ is ensnaring, the orbit of $D_0^*$ is contractible, so $|\omega_1^*(\ell)-\omega_2^*(\ell)|$, and hence also $|\omega_1(\ell)-\omega_2(\ell)|$, is bounded above by a quantity independent of $\ell$. However, our assumption on $v_1$ and $v_2$ guarantees that we can make $|\omega_1(\ell)-\omega_2(\ell)|$ arbitrarily large by choosing $\ell$ large enough; this is a contradiction. 

The preceding argument shows that all vertices in $G_1$ have the same winding number. A similar argument shows that all vertices in $G_2$ have the same winding number. Hence, every vertex has the same winding number as $v$. Since the sum of winding numbers is $0$, every vertex has winding number $0$. As $\mathcal O$ was arbitrary, $G$ is ensnaring. 
\end{proof} 

The converse of \cref{thm:wedge} is false. Indeed, in \cref{sec:wedgecomplete}, we will see examples of graphs $G_1$ and $G_2$ such that $G_1$ is ensnaring, $G_2$ is not ensnaring, and every wedge of $G_1$ and $G_2$ is ensnaring. (For example, one can take $G_1$ to be a complete graph with at least $3$ vertices and $G_2$ to be a tree.) However, we expect the following statement to hold.

\begin{conjecture}\label{conj:wedge_nonensnaring} 
If two graphs $G_1$ and $G_2$ are both not ensnaring, then any wedge of $G_1$ and $G_2$ is also not ensnaring.
\end{conjecture}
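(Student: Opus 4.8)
The plan is to prove the contrapositive directly: supposing $G_1$ is not ensnaring (the two factors play symmetric roles), I would exhibit a single orbit $\mathcal O$ of $\Theta_G$ with $\vec{\mathfrak w}_{\mathcal O}\neq 0$, where $G$ is a wedge of $G_1$ and $G_2$ glued at a vertex $v$; by \cref{prop:main} this shows $G$ is not ensnaring. The reduction I would use is an exact version of the bookkeeping behind \cref{thm:wedge}. For two vertices $v_1,v_2$ of $G_1$, the replicas $\mathbf v_1$ and $\mathbf v_2$ swap past one another only during transitions in which the coin lies in $G_1$, and these are precisely the transitions recorded by the contracted diagrams $D_i^*$, which evolve under $\Theta_{G_1}$. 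Hence, over one full period of $\mathcal O$, the difference $\mathfrak w_{\mathcal O}(v_1)-\mathfrak w_{\mathcal O}(v_2)$ equals the net relative winding accumulated by the restricted $G_1$-walk $D_{j_1}^*,D_{j_2}^*,\ldots$. It therefore suffices to build an orbit of $\Theta_G$ whose restricted $G_1$-walk traverses a non-contractible orbit $\mathcal O_1$ of $\Theta_{G_1}$ with nonzero net multiplicity.

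To build such an orbit, I would fix a non-contractible orbit $\mathcal O_1$ of $\Theta_{G_1}$ (which exists since $G_1$ is not ensnaring) and lift a diagram of $\mathcal O_1$ to a diagram of $\Theta_G$ by inserting the replicas of the vertices of $G_2$ other than $v$ as a single contiguous block on $\Cycle_n$. The key point is that, as long as the coin stays in $G_1$, this block is inert: its replicas never swap among themselves, so the block keeps its internal order and merely gets tunneled through whenever a $G_1$-replica carrying the stone crosses it. Consequently, while the coin remains in $G_1$, the contracted diagrams $D_i^*$ evolve exactly according to $\Theta_{G_1}$, the restricted walk advances monotonically along $\mathcal O_1$, and it accumulates the nonzero winding of $\mathcal O_1$. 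In the favorable situation where $\Theta_{G_1}$ admits a non-contractible orbit whose coin never visits $v$, no interaction with $G_2$ is possible at all, and the construction finishes immediately.

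The only way the construction can fail is at the wedge vertex: the coin escapes into $G_2$ precisely when it sits on $v$ and the stone points directly at a block-replica adjacent to $v$ in $G_2$. The main obstacle is the case in which every non-contractible orbit of $\Theta_{G_1}$ forces the coin onto $v$, and this is sharpest when $v$ is a dominating vertex of $G_2$, since then the block cannot be positioned so that the stone always points back into $G_1$ at each visit to $v$, and excursions into $G_2$ become unavoidable. Here I would try to control the excursions rather than forbid them: each maximal excursion returns the coin to $v$ with the restricted $G_1$-diagram either unchanged or conjugated, and by \cref{thm:conjugates} (applied to $G_1$, giving $\Theta_{G_1}(D)^\top=\Theta_{G_1}^{-1}(D^\top)$) a conjugation reverses the direction of travel along $\mathcal O_1$. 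The heart of a complete proof would be to show that, with a suitable choice of starting diagram and block placement, the parity of the total number of conjugations can be pinned down—or the forward and backward passes along $\mathcal O_1$ arranged so as not to cancel—so that the net restricted winding over one period is nonzero. Establishing this non-cancellation in full generality is the step I expect to be hardest, and it is presumably why the statement is posed as a conjecture rather than proved.
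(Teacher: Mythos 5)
First, a point of orientation: the statement you set out to prove is \cref{conj:wedge_nonensnaring}, which the paper poses as an open conjecture and does not prove. So there is no proof in the paper to compare against, and your attempt has to stand on its own. It does not, and the gap is more serious than the one you flag in your final paragraph: it is not a hard technical step awaiting a clever choice of starting diagram, but a step that is actually false at the level of generality in which you would need it.

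The structural problem is that your construction uses only the hypothesis that $G_1$ is not ensnaring: you lift a non-contractible orbit $\mathcal O_1$ of $\Theta_{G_1}$ into the wedge and try to arrange that excursions into $G_2$ do not cancel its winding. If this plan could be completed as described, it would prove the stronger statement that a wedge of a non-ensnaring graph with an \emph{arbitrary} graph $G_2$ is non-ensnaring --- and the paper itself refutes that statement. By \cref{thm:expelling}, trees are expelling and hence not ensnaring; yet by \cref{cor:completetree} (see also \cref{thm:wedgecomplete,thm:multitree}), the wedge of any tree with a complete graph $K_m$, $m\ge 3$, is ensnaring. So take $G_1$ to be a path and $G_2=K_m$: every orbit of $\Theta_G$ is contractible, by \cref{prop:main} no placement of the $G_2$-replicas and no starting diagram can produce a nonzero net restricted winding, and the forward and backward passes along $\mathcal O_1$ created by the conjugations of \cref{thm:conjugates} cancel \emph{exactly}, every time. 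Indeed, this exact cancellation is the mechanism behind the paper's proofs of \cref{thm:wedgecomplete,cor:completetree}, and $v$ dominating in $G_2=K_m$ is precisely your ``sharpest'' case. Consequently, the non-cancellation you defer can only hold if the argument somewhere exploits that $G_2$ is \emph{also} not ensnaring, and your proposal never indicates where or how that hypothesis would enter. (For contrast, in the disjoint-union setting of \cref{thm:union}, where excursions cannot occur, the one-sided argument you describe does go through --- which is essentially why the union statement is a theorem in the paper while the wedge statement remains a conjecture.)
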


Another natural graph operation besides taking wedges is taking disjoint unions. To understand when the disjoint union of two graphs is ensnaring, we will need to distinguish two different classes of ensnaring graphs. Just as we defined the winding number of a replica with respect to an orbit $\mathcal O$ of $\Theta_{G}$, we can define the winding number of the stone with respect to $\mathcal O$, which is just the net number of clockwise revolutions that stone makes throughout $\mathcal O$. 

\begin{definition}\label{def:revolutionary}
An ensnaring graph $G$ is \dfn{revolutionary} if there exists an orbit of $\Theta_G$ for which the stone has a nonzero winding number. We make the convention that a graph with $1$ vertex is revolutionary. 
\end{definition}

\begin{theorem}\label{thm:union}
The disjoint union of two graphs $G_1$ and $G_2$ is ensnaring if and only if both $G_1$ and $G_2$ are ensnaring but not revolutionary.
\end{theorem}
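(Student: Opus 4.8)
The plan is to reduce everything to a single statement about ``one-sided'' orbits and then apply it twice. First I would observe that in any orbit of $\Theta_{G}$ on a disjoint union $G=G_1\sqcup G_2$, the coin never leaves the connected component it starts in: a step that moves the coin must cross an edge of $G$, and every edge lies inside $G_1$ or inside $G_2$. Call an orbit a $G_1$\emph{-orbit} if its coin stays in $G_1$, and similarly for $G_2$. Moreover, for a $G_1$-orbit the internal edges of $G_2$ are never used (the coexisting replica always corresponds to a $G_1$-vertex, and it is never adjacent in $G$ to a $G_2$-replica), so a $G_1$-orbit depends on $G_2$ only through its number of vertices. Thus it suffices to analyze the $H$-orbits of $H\sqcup\overline{K_p}$ for a graph $H$ with $p\geq 1$ isolated ``passive'' vertices adjoined, and to prove: \emph{every $H$-orbit of $H\sqcup\overline{K_p}$ is contractible if and only if $H$ is ensnaring and not revolutionary.} Granting this, the theorem follows: by \cref{prop:main}, $G_1\sqcup G_2$ is ensnaring iff all $G_1$-orbits and all $G_2$-orbits are contractible, which by the reduced statement (applied with $H=G_1$, $p=|V(G_2)|\geq 1$, and with $H=G_2$, $p=|V(G_1)|\geq 1$) happens iff $G_1$ and $G_2$ are both ensnaring and not revolutionary. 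The conventions of \cref{rem:1vertex} and \cref{def:revolutionary} make the single-vertex cases consistent: a lone vertex is revolutionary, and indeed a passive vertex sitting in its own component is pushed monotonically around $\Cycle_n$ by the stone, hence has nonzero winding.

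To analyze an $H$-orbit $\mathcal O$ of $H\sqcup\overline{K_p}$ (write $m=|V(H)|$ and $n=m+p$), I would use the contraction $D\mapsto D^{*}$ from the proof of \cref{thm:wedge}: delete the $p$ passive replicas and contract the vacated arcs of $\Cycle_n$. This sends the steps of $\mathcal O$ in which the stone interacts with an $H$-replica to the steps of a genuine orbit of $\Theta_{H}$, and sends each step in which the stone passes a passive replica to a ``stall'' fixing $D^{*}$; using \cref{thm:conjugates} as in \cref{thm:wedge}, the reduced trajectory traverses (up to conjugation) an orbit $\mathcal O^{*}$ of $\Theta_{H}$, whose winding vector and whose stone-winding number are the reduced invariants. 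The heart of the argument is two claims about the passive replicas. Since they never swap with one another, their cyclic order on $\Cycle_n$ is preserved; tracking which ``gap'' (between consecutive passive replicas) the stone occupies, a level-crossing count shows that \emph{all passive replicas have equal winding number}, equal to minus the net number of revolutions the stone makes around the passive block. The second, decisive claim is that the stone crosses the passive block exactly once per net revolution it makes around the $H$-replicas in $\mathcal O^{*}$; equivalently, the common passive winding number is a nonzero integer multiple of the stone-winding number of $\mathcal O^{*}$ whenever the latter is nonzero, and in particular it vanishes precisely when $\mathcal O^{*}$ has stone-winding number $0$.

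With these claims in hand I would finish as follows. If $H$ is ensnaring and not revolutionary, then $\mathcal O^{*}$ has zero winding vector and zero stone-winding number; the second claim forces every passive winding number to be $0$, and then the identity $n\,\mathfrak w_{\mathcal O}(\mathbf a)=m\,\mathfrak w_{\mathcal O^{*}}(\mathbf a)+c_{\mathbf a}$ relating the full and reduced winding numbers of an $H$-replica $\mathbf a$ (where $c_{\mathbf a}$ records the signed passings of passive replicas made by $\mathbf a$ while it carries the stone), together with $\sum_{\mathbf a}c_{\mathbf a}=0$ and the vanishing of the reduced windings, forces every $\mathfrak w_{\mathcal O}(\mathbf a)=0$; hence $\mathcal O$ is contractible. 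Conversely, if $H$ is revolutionary I would take an orbit of $\Theta_{H}$ with nonzero stone-winding number, insert the passive replicas, and iterate $\Theta_{H\sqcup\overline{K_p}}$ to obtain an $H$-orbit; by the second claim its passive replicas have nonzero winding number, so it is not contractible. If instead $H$ is not ensnaring, a non-contractible orbit of $\Theta_{H}$ lifts similarly to a non-contractible $H$-orbit, so in either case $H\sqcup\overline{K_p}$ has a non-contractible $H$-orbit.

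I expect the main obstacle to be the second claim---the bookkeeping identifying the passive replicas' winding with the reduced stone-winding number. The difficulty is that a passive replica moves \emph{only} when the stone passes it (each passing nudging it one step backward), while its position relative to the $H$-block drifts as the stone shuffles the $H$-replicas; making rigorous the statement ``one crossing of the passive block per reduced revolution of the stone,'' and correctly matching periods when the reduced stone-winding number is not a multiple of $n$, is where the delicate work lies. This is also exactly where the hypothesis ``not revolutionary'' enters, which is what makes \emph{revolutionary} the right notion for controlling disjoint unions.
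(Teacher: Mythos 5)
Your high-level plan is the same as the paper's: restrict to orbits whose coin stays in one component, contract away the passive replicas to obtain an orbit $\mathcal O^*$ of $\Theta_H$, and compare winding data of $\mathcal O$ and $\mathcal O^*$. The reduction to $H$ plus isolated vertices is valid, and in fact your appeal to \cref{thm:conjugates} is unnecessary: unlike in \cref{thm:wedge}, the coin never leaves $H$, so each application of $\Theta_G$ either fixes $D^*$ or induces exactly one application of $\Theta_H$; the reduced trajectory is literally an orbit of $\Theta_H$ (traversed some positive integer number $t$ of times per period of $\mathcal O$), with no conjugation. The problem is that the two places where you defer or assert the quantitative bookkeeping are exactly where the content of the theorem lies. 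Your Claim B is left unproven---you flag it yourself as the main obstacle---and its quantitative form is false: the common passive winding number is in general a \emph{rational}, non-integer multiple of the reduced stone-winding number. What is true, and what the paper establishes with a one-line crossing count, is that a passive replica moves (one step backward) exactly when the stone swaps past it, so over one period its winding number equals $-\frac{1}{n-1}$ times the stone's winding number with respect to $\mathcal O$ (not $\mathcal O^*$); one then transfers to $\mathcal O^*$ using the fact that the difference between the stone's winding number and the common winding number of the $H$-replicas is preserved under reduction up to the factor $t$. Without some such argument, your direction ``revolutionary $\Rightarrow$ some $H$-orbit is non-contractible'' has no proof.

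The second gap is a non-sequitur in your ``if'' direction. Granting vanishing reduced windings, your identity gives $n\,\mathfrak w_{\mathcal O}(\mathbf a)=c_{\mathbf a}$ (note also that the identity should carry the factor $t$, i.e.\ $n\,\mathfrak w_{\mathcal O}(\mathbf a)=t\,m\,\mathfrak w_{\mathcal O^*}(\mathbf a)+c_{\mathbf a}$); the relation $\sum_{\mathbf a}c_{\mathbf a}=0$ then yields only $\sum_{\mathbf a}\mathfrak w_{\mathcal O}(\mathbf a)=0$, which you already knew once the passive windings vanish, and it does \emph{not} force each $\mathfrak w_{\mathcal O}(\mathbf a)$ to be $0$. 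You need each $c_{\mathbf a}$ to vanish individually. This does follow, but from a per-replica crossing count: for two periodic motions on $\Cycle_n$, the net signed number of crossings equals the difference of winding numbers, and every crossing of $\mathbf a$ with a passive replica occurs at the stone, so $c_{\mathbf a}=p\,\bigl(\mathfrak w_{\mathcal O}(\mathbf a)-w\bigr)$, where $w$ is the common passive winding number and $p$ is the number of passive replicas. When $w=0$ this gives $n\,\mathfrak w_{\mathcal O}(\mathbf a)=p\,\mathfrak w_{\mathcal O}(\mathbf a)$, hence $\mathfrak w_{\mathcal O}(\mathbf a)=0$ since $p<n$. (The paper's route is equivalent: ensnaringness of $H$ forces all $H$-replicas to share one winding number, the crossing count pins the passive winding to $-\frac{1}{n-1}$ times the stone's, and the zero-sum identity then shows everything vanishes precisely when the reduced stone winding does.) With these two repairs---both crossing-count arguments of the kind the paper actually performs---your outline closes up into a correct proof along essentially the same lines as the paper's.
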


\begin{proof}
Let $G$ be the disjoint union of $G_1$ and $G_2$. Let $n=n_1+n_2$, where $n_i$ is the number of vertices of $G_i$. Consider an orbit of $\Theta_G$. Assume without loss of generality that the coin starts on a vertex of $G_1$. The coin always stays within $G_1$, so the stone always moves past replicas of vertices in $G_2$ when it sees them. In particular, if $n_1=1$, then this orbit is not contractible, so $G$ is not ensnaring. Thus, we may asssume $n_1>1$. 

Given a stone diagram $D$ in which the stone coincides with the replica of a vertex in $G_1$, define $D^*$ (as before) to be the diagram obtained from $D$ by deleting all replicas of vertices in $G_2$ and then contracting every edge of $\Cycle_n$ that is incident to a vertex that is no longer occupied by a replica. If $D^*\neq\Theta_{G}(D)^*$, then $\Theta_{G}(D)^*=\Theta_{G_1}(D^*)$. If $G_1$ is not ensnaring, we can find some initial stone diagram $D_0$ in $\mathcal O$ such that $D_0^*$ has a non-contractible orbit in which some two vertices have different winding numbers. This difference in winding numbers is unaffected by the vertex replicas of $G_2$, so in the orbit of $D_0$, the two vertices will still have different winding numbers. Thus, if $G_1$ is not ensnaring, neither is the disjoint union.

Now suppose $G_1$ is ensnaring. The preceding argument shows that in each orbit of $\Theta_G$ in which the coin stays on $G_1$, all the replicas of vertices in $G_1$ have the same winding number. Consider an orbit $\mathcal O$ of $\Theta_G$ starting with some diagram $D_0$ with the coin on $G_1$, and let $m_1$ be the common winding number of the replicas of the vertices in $G_1$ with respect to $\mathcal O$. Let $m$ be such that the winding number of the stone with respect to $\mathcal O$ is $m+m_1$. The orbit of $\Theta_{G_1}$ containing $D_0^*$, which we call $\mathcal O^*$, is contractible, so $m=0$ if and only if the stone's winding number with respect to $\mathcal O^*$ is $0$ (as the difference in winding numbers of the stone and any replica of $G_1$ is not affected by the presence of $G_2$).

Consider any replica $\mathbf{b}$ of a vertex in $G_2$. Every $n-1$ steps the stone takes in one direction, it moves past $\mathbf{b}$ once, so the winding number of $\mathbf{b}$ with respect to $\mathcal O$ is $-\frac 1{n-1}$ times the stone's winding number with respect to $\mathcal O$. Thus, all replicas of vertices in $G_2$ have the same winding number $-\frac 1{n-1}(m_1+m)$ with respect to $\mathcal O$. Let $m_2=-\frac 1{n-1}(m_1+m)$.

Using the fact that the sum of all winding numbers is $0$, we can solve for all of them. Indeed, \[0=n_1m_1 + n_2m_2 = n_1m_1 - \frac{n_2}{n-1}(m_1+m) = \left(n_1-\frac{n_2}{n-1}\right)m_1 - \frac{n_2}{n-1}m.\] As $n_2\neq 0$ and $n_1-\frac{n_2}{n-1}>n_1-1>0$, we conclude that $m_1=0$ if and only if $m=0$. Thus, $\mathcal O$ is contractible if and only if the stone's winding number with respect to $\mathcal O^*$ is $0$. It follows that $G_1$ is ensnaring but not revolutionary if and only if all orbits in which the coin sits on $G_1$ are contractible. Repeating the above argument with $G_2$ in place of $G_1$ finishes the proof.  
\end{proof}

\cref{thm:union} raises the question: which ensnaring graphs are revolutionary? Clearly, $K_n$ is not revolutionary for $n\ge 3$, as the stone never moves. On the other hand, we can show that most odd cycles are revolutionary.

\begin{theorem}\label{thm:revcycles}
For every odd $n\ge 5$, the cycle graph $C_n$ is revolutionary.
\end{theorem}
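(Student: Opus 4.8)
The goal is to show that for odd $n\geq 5$, the cycle graph $C_n$ admits an orbit of $\Theta_{C_n}$ in which the stone has nonzero winding number. From the proof of \cref{thm:cycles}, I know the essential dynamical feature of cycles: once the coin begins traversing the cycle, it only ever moves in one direction around $C_n$, and every coin-step reverses the stone's orientation. So the plan is to track the stone's net motion directly. Whenever the coin moves (i.e., the replicas it points at are adjacent in $C_n$), the stone stays put and merely flips orientation; whenever it does not move (the replicas are non-adjacent in $\Cycle_n$, which can only happen at the single ``gap'' edge $\{m,m+1\}$ where the two ends of the cycle are not joined), the stone advances one position. Thus the stone's winding number is governed entirely by how many times, and in which orientation, the stone slides across the non-edge of $\Cycle_n$ relative to the replicas.

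The key is to exhibit a concrete starting stone diagram whose orbit is asymmetric with respect to the stone's two crossing directions. First I would set up coordinates: place the replicas of the cycle vertices $a_1,\ldots,a_n$ around $\Cycle_n$ and pick an initial configuration and stone orientation so that, as the coin runs once around $C_n$, the stone makes a definite net displacement. Since $n$ is odd, \cref{thm:cycles} already tells me the coin must complete an \emph{even} number $2t$ of revolutions of $C_n$ before the whole state returns to start (the stone orientation only resets after an even number of coin-revolutions). Over these $2t$ coin-revolutions I would compute the stone's net clockwise displacement. The heart of the argument is to show this net displacement is a nonzero multiple of $n$ for a suitably chosen orbit; because $n\geq 5$ is odd, the parity/length mismatch between the $n$-cycle and the $n$-gon $\Cycle_n$ prevents the clockwise and counterclockwise stone-crossings from canceling.

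Concretely, I expect the cleanest route is to choose the ``pleasant''-type configuration used in \cref{prop:expelling2} (or a close variant adapted to the full cycle rather than an induced odd cycle) and simply follow the stone through one full cycle of the coin, verifying that the stone's position relative to $\Cycle_n$ shifts by a fixed nonzero amount each time the coin rounds the cycle, and that these shifts accumulate rather than cancel over the even number of revolutions. I would verify by a small-case computation (say $n=5$) that the stone's winding number is genuinely nonzero, which both anchors the argument and pins down the sign and the required number of revolutions; the general odd $n$ then follows by the same bookkeeping since the relevant count is a linear function of $n$.

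The main obstacle I anticipate is \emph{not} showing some orbit has a moving stone --- that is easy --- but rather controlling the \emph{net} winding carefully enough to be sure it is nonzero after the stone's orientation has returned to its original value. The subtlety is that the stone advances one way when pointing clockwise across the gap and the other way when pointing counterclockwise, so I must ensure the orbit does not visit the gap equally often in each orientation. Establishing this requires pinning down exactly how the stone's orientation correlates with its position at each gap-crossing; here the odd parity of $n$ is decisive, and I would make the correlation explicit by tracking orientation as a function of the number of coin-steps taken (which flips with each step) modulo the cycle length $n$. Getting this parity bookkeeping right is the crux, and a careful $n=5$ base computation is the natural way to fix the pattern before generalizing.
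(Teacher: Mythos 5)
Your high-level plan---exhibit one explicit orbit, track the stone's net displacement, and show the per-revolution displacements accumulate---has the same shape as the paper's argument, but the execution has two genuine problems. First, the concrete starting configuration you name is exactly the wrong one. In the ``pleasant'' orbit of \cref{prop:expelling2} (applied to the full cycle), no swap ever occurs along the edge $\{m,m+1\}$ of $\Cycle_n$; since the stone only changes position by being carried through a swap, the stone never crosses that edge either, so over the whole (periodic) orbit its net displacement is bounded by $n-1$ in absolute value and is a multiple of $n$, hence is $0$. The pleasant configuration is engineered to confine all motion away from one edge of $\Cycle_n$, which is precisely what you must avoid if you want nonzero stone winding; your proposed $n=5$ base computation would return stone winding $0$ and give no pattern to generalize. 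The paper instead starts from the identity configuration $(\mathbbm{1},1,1)$, where the replicas sit around $\Cycle_n$ in the same cyclic order as the vertices of $C_n$: the stone then picks up $\mathbf{2}$ and carries it counterclockwise $n-3$ steps until it meets $\mathbf{3}$, and after $n-1$ applications of $\Theta_{C_n}$ the resulting diagram is a rotated copy of the original. That cyclic-symmetry observation is the missing mechanism for your ``accumulation'' step: each block of $n-1$ applications contributes the same net $n-3$ counterclockwise steps to the stone, so the stone's winding number is nonzero.

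Second, your model of the dynamics and your diagnosis of where oddness enters are both off. In $C_n$ with $n\ge 5$, each vertex has $n-3\ge 2$ non-neighbors, so the stone advances many times between coin moves (in the identity orbit, $n-3$ consecutive advances all in the same direction); there is no single ``gap'' edge of $\Cycle_n$ at which advances occur---that picture belongs to \cref{prop:expelling2}, where the odd cycle is a proper induced subgraph and the remaining replicas fill an arc. Moreover, parity plays no role in the non-cancellation: the identity-orbit computation yields nonzero stone winding for every $n\ge 5$, even or odd. Oddness is needed only because ``revolutionary'' requires the graph to be ensnaring, which is \cref{thm:cycles} (even cycles are bipartite, hence expelling, so they are disqualified at that stage, not by any cancellation of stone crossings). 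So the parity bookkeeping you identify as the crux is not the crux; the crux is choosing a starting diagram whose orbit reproduces a rotated copy of itself after finitely many steps, and your proposal neither supplies such a diagram nor any substitute mechanism.
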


\begin{proof}
Assume $n\geq 5$ is odd, and let the vertices of $C_n$ be $1,\ldots,n$, listed in clockwise order. Let $D$ be the stone diagram corresponding to the triple $(\mathbbm{1},1,1)$, where $\mathbbm{1}$ is the identity permutation. In $D$, each replica ${\bf i}$ sits on vertex $i$, and the stone sits on vertex $1$ and points clockwise. Consider the orbit of $\Theta_{C_n}$ starting at $D$. After one step, $\mathbf{1}$ and $\mathbf{2}$ swap, and the stone turns around. Then, the stone carries $\mathbf{2}$ around the whole diagram until it sees $\mathbf{3}$, at which point $\mathbf{2}$ and $\mathbf{3}$ swap and the stone turns around again. The stone diagram at this point in time is $\Theta_{C_n}^{n-1}(D)$; it can be obtained from $D$ by moving every replica one space clockwise and moving the stone two steps clockwise (see \cref{fig:revcycles_example}). During these steps, the stone made a net total of $n-3$ steps counterclockwise. Using the cyclic symmetry of $C_n$, we see that $n-1$ more applications of $\Theta_{C_n}$ will result in the stone moving a net total of $n-3$ steps counterclockwise again. In general, if we start with $D$ and apply $\Theta_{C_n}$ a total of $\ell(n-1)$ times for some positive integer $\ell$, then the stone will move a net total of $\ell(n-3)$ steps counterclockwise. Because $n-3>0$, this implies that the winding number of the stone is nonzero in this orbit. We already know by \cref{thm:cycles} that $C_n$ is ensnaring, so it is revolutionary.  
\end{proof} 

\begin{figure}[]
  \begin{center}
  \includegraphics[width=0.965\linewidth]{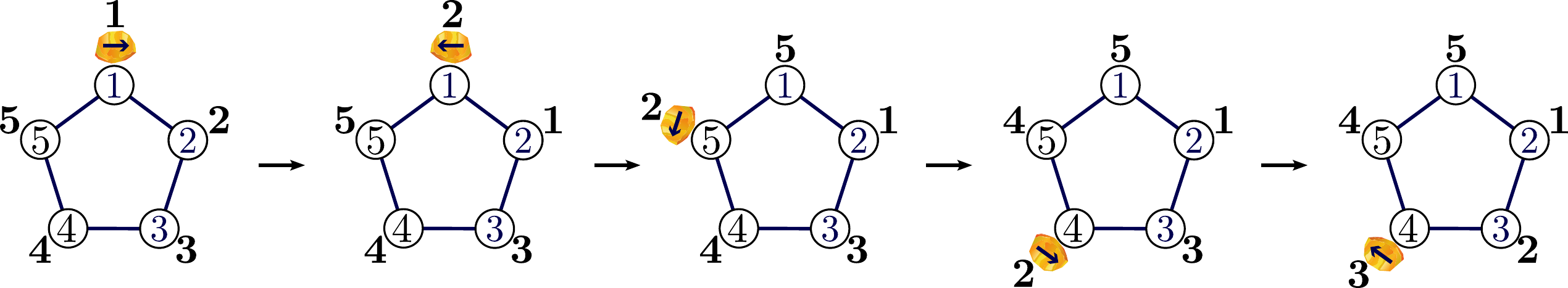}
  \end{center}
\caption{An illustration of the proof of \cref{thm:revcycles} with $n=5$.  }\label{fig:revcycles_example} 
\end{figure}

\section{Wedges Involving Complete Graphs}\label{sec:wedgecomplete}

The orbits in a complete graph are very simple, so it makes sense to consider the wedge of a complete graph with another graph.

\begin{proposition}\label{thm:wedgecomplete}
Let $H$ be a graph. Let $G$ be a wedge of $H$ with a complete graph $K_m$ for $m\geq 3$. Let $v_1,\ldots,v_{m}$ be the vertices of $K_m$, where $v_1$ is the vertex identified with a vertex of $H$ in $G$. If $D_0$ is a stone diagram in which the stone coexists with the replica ${\bf v}_j$ of a vertex $v_j$ with $j\geq 2$, then the orbit of $\Theta_G$ containing $D_0$ is contractible.
\end{proposition}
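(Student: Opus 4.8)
The plan is to localize the analysis to the complete graph $K_m$ and then to control the coin's excursions into $H$. Since the stone coexists with $\mathbf v_j$ for some $j\ge 2$, the coin starts on a vertex all of whose neighbors lie in $K_m$. I would first invoke the projection $D\mapsto D^*$ onto $K_m$ used in the proof of \cref{thm:wedge}: as long as the coin remains among $v_1,\dots,v_m$, the projected diagram evolves under $\Theta_{K_m}$. Recalling from the proof of \cref{thm:complete} that the stone never changes position under $\Theta_{K_m}$, it follows that while the coin stays in $K_m$ the stone is trapped strictly between two consecutive replicas among the $m\ge 3$ replicas $\mathbf v_1,\dots,\mathbf v_m$, and hence accrues no winding; during these phases the replicas $\mathbf v_1,\dots,\mathbf v_m$ only shuffle locally and no replica winds. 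Thus every contribution to the winding vector $\vec{\mathfrak w}_{\mathcal O}$ comes from the excursions the coin makes into $H$.

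An excursion can only begin at the shared vertex $v_1$: it is triggered exactly when the reduced dynamics places the coin on $v_1$ with the stone pointed at the replica of an $H$-neighbor of $v_1$. During an excursion the stone carries a replica of an $H$-vertex and therefore passes freely over the deep replicas $\mathbf v_2,\dots,\mathbf v_m$ while reflecting only off replicas of $H$-vertices and off $\mathbf v_1$; consequently the stone, the deep replicas, and the replicas of $H$-vertices may all wind. The heart of the argument is to show that these excursions cancel in pairs. Within one period of the underlying $\Theta_{K_m}$-motion, the states in which the coin sits on $v_1$ occur an even number of times, balanced between the two stone orientations; this is a parity property of $\Theta_{K_m}$-orbits, and is a second place where $m\ge 3$ is used. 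I would pair each excursion with the one triggered by the oppositely-oriented $v_1$-state and use \cref{thm:conjugates}, which says $\Theta_G(D)^\top=\Theta_G^{-1}(D^\top)$, to identify the paired excursion as the conjugate, hence the time-reversal, of the first. A time-reversed excursion produces exactly the opposite net displacement of every replica and of the stone, so the two excursions in a pair contribute canceling winding. Since the $K_m$-phases contribute nothing, summing over all pairs gives $\vec{\mathfrak w}_{\mathcal O}=0$, and $\mathcal O$ is contractible by \cref{prop:main}.

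A convenient bookkeeping device to reduce the burden of this pairing is the common winding number: the first half of the argument in \cref{thm:wedge}, together with \cref{thm:complete}, already shows that $\mathbf v_1,\dots,\mathbf v_m$ share a single winding number $w$ with respect to $\mathcal O$; since the winding numbers sum to $0$, it then suffices to prove $w=0$, so the cancellation need only be tracked for a single quantity.

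The main obstacle is the pairing step itself. Making it rigorous requires synchronizing the $\Theta_{K_m}$-clock with the interruptions caused by excursions: one must check that after an excursion the coin re-enters $K_m$ so that the reduced configuration recurs consistently, that the placement of the $H$-replicas relative to the gateway $v_1$ is reproduced (up to the conjugation) when the oppositely-oriented $v_1$-state is reached, and that the claimed orientation-balancing parity genuinely holds for $\Theta_{K_m}$ with $m\ge 3$. Everything else in the argument is routine once this synchronization is established.
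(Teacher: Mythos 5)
Your argument correctly handles the easy half of the problem but defers the essential difficulty. The confinement observation for $K_m$-phases is valid (and is a nice direct alternative to the paper's appeal to the machinery of \cref{thm:wedge}): while the coin is on $K_m$-vertices, the carried replica stops at every $K_m$-replica, so the stone is trapped in a fixed arc and, if the coin never leaves $K_m$, every replica has a bounded lift and hence winding number $0$. The gap is the pairing step, which is the entire content of the proposition. You pair excursions into $H$ according to the two oppositely-oriented $v_1$-states of the reduced $\Theta_{K_m}$-clock, and invoke \cref{thm:conjugates} to declare the paired excursions time-reversals of one another. But conjugacy in the sense of \cref{thm:conjugates} requires the \emph{full} stone diagrams launching the two excursions to agree in every replica position---including all $H$-replicas and all deep replicas---and differ only in stone orientation. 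The reduced clock carries none of that data. Worse, each excursion itself rearranges the $H$-replicas, can move the stone past deep replicas, and can return the coin to $v_1$ with either orientation, so the reduced trace is not a genuine $\Theta_{K_m}$-orbit at all but a concatenation of segments glued by orientation-preserving or orientation-flipping excursions; the parity property you cite for pure $\Theta_{K_m}$-orbits does not transfer. You acknowledge exactly this in your final paragraph, which means the proof is not complete. A separate logical error: your ``bookkeeping device'' is invalid, since knowing that $\mathbf v_1,\dots,\mathbf v_m$ share a winding number $w$ and that all winding numbers sum to $0$ does not reduce contractibility to $w=0$---the $H$-replicas could have nonzero winding numbers cancelling among themselves.

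The idea you are missing is that one does not need to pair excursions at all; a single event forces a global time-reversal symmetry of the whole orbit. Since $D_0$ places the coin on a deep vertex, periodicity guarantees that either the coin never leaves $K_m$, or at some moment the coin moves from $H$ through $v_1$ onto a deep vertex $v_j$. At that moment $\mathbf v_1$ sits directly behind the stone, so (using $m\ge 3$) the stone next reaches some other deep replica $\mathbf v_k$ with $k\ge 2$ before returning to $\mathbf v_1$. The diagrams $D_r$ and $D_{r+1}$ immediately before and after the resulting swap are conjugate \emph{up to transposing $\mathbf v_j$ and $\mathbf v_k$}, and since $v_j$ and $v_k$ are symmetric vertices of $G$, this transposition is invisible to the dynamics. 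By the argument of \cref{thm:conjugates}, the orbit is then a palindrome: $D_{r+i}$ and $D_{r-i+1}^\top$ differ only by that transposition for every $i$, so every swap occurring before the event is undone after it. This yields winding number $0$ for every replica other than $\mathbf v_j$ and $\mathbf v_k$ in one stroke---no excursion bookkeeping needed---and the two exceptional replicas are finished off by the shared-winding-number fact from the proof of \cref{thm:wedge} (or by the sum-to-zero identity, which is valid at that point because all other winding numbers are already known to vanish). If you want to salvage your excursion-pairing picture, the correct organizing principle is this palindrome symmetry centered at the deep-deep swap, not the parity of the reduced $\Theta_{K_m}$-clock.
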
 

\begin{proof}
Consider iteratively applying $\Theta_G$, starting with $D_0$. Let $D_i=\Theta_G^i(D_0)$. If the coin only stays on the vertices of $K_m$, then since $K_m$ is ensnaring (\cref{thm:complete}) and not revolutionary, the proof of \cref{thm:wedge} tells us that the resulting orbit is contractible. Thus, we can assume that at some point, the coin moves from a vertex $u$ of $H$ to $v_1$ and then to some $v_j$ with $j\geq 2$.

When the coin moves onto $v_j$, the replica $\mathbf v_1$ of $v_1$ is behind the stone. As $m\ge 3$, the stone will reach some replica $\mathbf v_k$ with $k\geq 2$ before it reaches $\mathbf v_1$ again. Once the stone does reach $\mathbf{v}_k$, it will turn around and swap $\mathbf v_j$ with $\mathbf v_k$. Suppose this occurs between stone diagrams $D_r$ and $D_{r+1}$.

Notice that $D_r$ and $D_{r+1}$ are almost conjugates; the only additional difference is that $\mathbf v_j$ and $\mathbf v_k$ are swapped. However, $\mathbf v_j$ and $\mathbf v_k$ are symmetric in the graph $G$, so whether they are swapped does not affect the stone diagram's behavior. By an argument similar to the one used to prove \cref{thm:conjugates}, we find that $D_{r+2}$ and $D_{r-1}^\top$ will also only differ by swapping $\mathbf v_j$ and $\mathbf v_k$. Likewise, for all $i$, the diagrams $D_{r+i}$ and $D_{r-i+1}^\top$ only differ by swapping $\mathbf v_j$ and $\mathbf v_k$. Thus, if two replicas swap between $D_{r-i}$ and $D_{r-i+1}$ for some $i$, this swap is undone between $D_{r+i}$ and $D_{r+i+1}$, negating the effect on the winding number of any replica that is not $\mathbf v_j$ or $\mathbf v_k$. It remains to show $\mathbf v_j$ and $\mathbf v_k$ also have winding number~$0$.

Since $K_m$ is ensnaring, the proof of \cref{thm:wedge} tells us that $\mathbf v_1,\ldots,\mathbf{v}_{m}$ all have the same winding number. We already know that $\mathbf v_1$ has winding number $0$. Thus, $\mathbf v_j$ and $\mathbf v_k$ also have winding number $0$, so the orbit containing $D_0$ is contractible.
\end{proof}

If $G$ is a tree, then it follows from \cite[Lemma~5.1]{ADS} that in any orbit of $\Theta_{G}$, every replica swaps past every other replica. This allows us to prove the following corollary. 

\begin{corollary}\label{cor:completetree}
For $m\ge 3$, the wedge of the complete graph $K_m$ with any tree is ensnaring.
\end{corollary}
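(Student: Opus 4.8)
The plan is to wedge a tree $H$ onto a single vertex $v_1$ of $K_m$ and show that \emph{every} orbit of $\Theta_G$ is contractible. I would split the orbits into two cases according to where the stone coexists in some chosen representative diagram $D_0$: either the stone always coexists with a replica $\mathbf v_j$ for some $j \geq 2$ (a ``core'' vertex of the complete graph), or at some point the stone coexists with a replica of a vertex of the tree $H$ (including possibly $v_1$). The first case is already handled by \cref{thm:wedgecomplete}, which tells us directly that such an orbit is contractible. So the real content is the second case.

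For the second case I would invoke the structural fact supplied just before the corollary: since $H$ is a tree, \cite[Lemma~5.1]{ADS} guarantees that in any orbit of $\Theta_G$ every replica eventually swaps past every other replica. In particular, within any orbit the coin must at some point sit on a vertex of $H$ and then, following the tree-traversal behavior, the coin will eventually move from some vertex $u$ of $H$ onto $v_1$ and then onto some $v_j$ with $j \geq 2$. But once the coin reaches such a $v_j$, the diagram at that moment is of exactly the form required by the hypothesis of \cref{thm:wedgecomplete} (the stone coexists with $\mathbf v_j$ for $j \geq 2$). Since orbits are unordered cyclic objects, membership of such a diagram in the orbit means the orbit is the one \cref{thm:wedgecomplete} declares contractible. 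Thus every orbit of $\Theta_G$ contains a diagram whose stone coexists with a core replica, and therefore every orbit is contractible.

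Having shown each orbit $\mathcal O$ has $\vec{\mathfrak w}_{\mathcal O}=0$, I would conclude by \cref{prop:main} that $G$ is ensnaring, completing the proof.

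The step I expect to be the main obstacle is justifying cleanly that every orbit genuinely contains a diagram whose stone coexists with $\mathbf v_j$ for some $j \geq 2$. The subtlety is ensuring the coin actually \emph{enters} the complete-graph core $\{v_2,\ldots,v_m\}$ rather than merely visiting $v_1$: here one must argue that the ``every replica swaps past every other replica'' consequence of \cite[Lemma~5.1]{ADS}, applied inside $G$, forces the coin to traverse the edges of $K_m$ and not stay confined to the tree. Once one is confident that the coin reaches some $v_j$ with $j \geq 2$, the reduction to \cref{thm:wedgecomplete} is immediate, but pinning down this reachability claim rigorously—translating the replica-swap conclusion of \cite[Lemma~5.1]{ADS} into a statement about the coin's itinerary—is where the care is needed.
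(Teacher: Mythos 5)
Your overall reduction is the same as the paper's: by \cref{thm:wedgecomplete}, it suffices to show that every orbit contains a diagram in which the stone coexists with $\mathbf{v}_j$ for some $j\geq 2$, i.e., that the coin eventually reaches the core $\{v_2,\ldots,v_m\}$. The gap is in how you try to force this. You invoke \cite[Lemma~5.1]{ADS} for ``any orbit of $\Theta_G$,'' but that lemma (as the paper uses it) applies when the \emph{entire} graph is a tree, and your $G$ is not a tree. Worse, the statement you import is actually false for $G$: there exist orbits in which the coin never leaves the vertices of $K_m$. For instance, take $K_3$ wedged with the single edge $\{v_1,u\}$, place the replicas in clockwise order $\mathbf{v}_1,\mathbf{v}_2,\mathbf{v}_3,\mathbf{u}$, and let the stone coexist with $\mathbf{v}_2$ pointing toward $\mathbf{v}_3$; the resulting orbit has period $6$, the stone never leaves its position, the coin cycles among $v_1,v_2,v_3$, and $\mathbf{u}$ never swaps with any replica. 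So ``every replica swaps past every other replica'' cannot be asserted for orbits of $\Theta_G$, and your case-2 argument has no valid engine behind it. You correctly flagged this reachability claim as the main obstacle, but the proposal as written does not close it.

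The paper closes it with a short contradiction argument that is the real content of the proof. Suppose the coin never leaves $T$ throughout the orbit. Then the stone never coexists with $\mathbf{v}_j$ for $j\geq 2$, and it can never coexist with $\mathbf{v}_1$ while pointing toward some $\mathbf{v}_j$ with $j\geq 2$, since that transition would immediately move the coin into the core. Hence no transition in the orbit ever uses a core edge, so the replicas evolve exactly as they would under $\Theta_{T'}$, where $T'$ is the induced subgraph of $G$ on $V(T)\cup\{v_2\}$ --- and $T'$ \emph{is} a tree. Now \cite[Lemma~5.1]{ADS} applies legitimately and forces $\mathbf{v}_1$ to swap with $\mathbf{v}_2$ at some time; but such a swap moves the coin from $v_1$ to $v_2$, contradicting the assumption. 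This auxiliary tree $T'$ (or an equivalent device) is precisely what your proposal is missing: it is the step that converts the tree lemma, which you cannot apply to $G$ itself, into the statement that the coin must enter the core.
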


\begin{proof}
Let $v_1,\ldots,v_m$ be the vertices of a complete graph $K_m$. Let $G$ be a wedge of $K_m$ with a tree $T$, where $v_1$ is the vertex of $K_m$ identified with a vertex of $T$. Consider traversing an orbit $\mathcal O$ of $\Theta_G$. We aim to show that $\mathcal O$ is contractible. By \cref{thm:wedgecomplete}, we just need to show that there is a time when the coin sits on a vertex $v_i$ with $i\geq 2$. Suppose instead that the coin never leaves $T$. Let $T'$ be the induced subgraph of $G$ formed by $T$ and the vertex $v_2$, and note that $T'$ is a tree. Since the coin never leaves $T$, it of course never leaves $T'$; this means that the replicas move around in the orbit of $\Theta_G$ in the same way that they would in an orbit of $\Theta_{T'}$. Using \cite[Lemma~5.1]{ADS}, we find that there is a time when $\mathbf{v}_1$ swaps with $\mathbf{v}_2$. But then the coin must move from $v_1$ to $v_2$, which is a contradiction. 
\end{proof}

Notably, trees are not ensnaring; in fact, since they are bipartite, \cref{thm:expelling} tells us they are expelling. Thus, this corollary shows that the converse of \cref{thm:wedge} is false. We can extend this result even further in the following theorem.

\begin{theorem}\label{thm:multitree}
Fix $m\ge 3$. Let $G$ be a graph obtained by wedging a (possibly one-vertex) tree onto each vertex of $K_m$. Then $G$ is ensnaring.
\end{theorem}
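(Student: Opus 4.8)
The plan is to show that every orbit $\mathcal O$ of $\Theta_G$ is contractible, which by \cref{prop:main} amounts to showing that every winding number $\mathfrak w_{\mathcal O}(a)$ vanishes. Write $v_1,\dots,v_m$ for the vertices of the core $K_m$ and let $T_1,\dots,T_m$ be the (possibly one-vertex) trees wedged at them, so that $v_j$ is the root of $T_j$. The key idea is that, one tree at a time, the picture collapses to the single-tree wedges already handled in \cref{cor:completetree}. Concretely, I would prove that for each fixed $j$, all replicas of vertices in $K_m\cup T_j$ share a common winding number with respect to $\mathcal O$; since the core $K_m$ is contained in every one of these sets, this forces all vertices of $G$ to have a single common winding number $c$, and then $0=\sum_a\mathfrak w_{\mathcal O}(a)=nc$ gives $c=0$.

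To establish the claim for a fixed $j$, I would mimic the deletion argument from the proof of \cref{thm:wedge}. Whenever the coin sits on a vertex of $K_m\cup T_j$, let $D^\sharp$ be the diagram obtained from the current stone diagram by deleting the replicas of all vertices lying in the other trees $T_i$ (with $i\neq j$) and contracting the resulting empty arcs of $\Cycle_n$. The induced subgraph of $G$ on $K_m\cup T_j$ is exactly the wedge of $K_m$ with the single tree $T_j$ at $v_j$, so I would check, just as in \cref{thm:wedge}, that as long as the coin stays in $K_m\cup T_j$ the reduced diagram evolves under $\Theta_{K_m\vee_{v_j}T_j}$ (the stone merely slides past any deleted replica it meets). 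The structural point that makes this work is that a vertex strictly inside some $T_i$ with $i\neq j$ is adjacent in $G$ to no vertex outside $T_i$; hence the coin can leave the region $K_m\cup T_j$ only by stepping from a core vertex $v_i$ into $T_i$, and it must return through that same $v_i$. Throughout such an excursion the cyclic order of the replicas of $K_m\cup T_j$ is frozen, so when the coin returns the reduced diagram agrees with the one just before the excursion, up to conjugation (via \cref{thm:conjugates}).

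Now \cref{cor:completetree} (together with \cref{thm:complete} in the degenerate case that $T_j$ is a single vertex) guarantees that $K_m\vee_{v_j}T_j$ is ensnaring, so the reduced trajectory tracks a finite, contractible orbit of $\Theta_{K_m\vee_{v_j}T_j}$. Following the winding-difference estimate in the proof of \cref{thm:wedge} verbatim, the difference between the winding numbers (with respect to $\mathcal O$) of any two vertices of $K_m\cup T_j$ is then bounded above independently of how long the orbit is run; if two such winding numbers were unequal, this difference would instead grow without bound, a contradiction. Thus all vertices of $K_m\cup T_j$ share a winding number, and ranging over $j$ and using the common core yields a single value $c$ for every vertex of $G$, whence $c=0$ and $\mathcal O$ is contractible. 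As $\mathcal O$ is arbitrary, $G$ is ensnaring.

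The step I expect to require the most care — and the reason this is not a one-line corollary of \cref{thm:wedge} — is the excursion bookkeeping, since $G$ is not a single wedge: the deleted part $\bigsqcup_{i\neq j}(T_i\setminus v_i)$ is attached to $K_m\cup T_j$ at the several distinct "gates" $v_i$, rather than at one shared vertex. The content to verify is that each maximal excursion into a tree $T_i$ exits and re-enters through the same gate $v_i$ and leaves the relative configuration of the $K_m\cup T_j$ replicas unchanged up to conjugation, so that the reduced trajectory genuinely realizes a single contractible orbit of $\Theta_{K_m\vee_{v_j}T_j}$ and the bound on winding-number differences survives the passage from the reduced orbit back to $\mathcal O$. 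I expect this to be a routine but careful adaptation of the argument already carried out for \cref{thm:wedge}, with the gate $v_i$ playing the role of the shared vertex $v$ there.
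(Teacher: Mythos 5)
Your proof is correct, but it takes a genuinely different route from the paper's. You reduce to the single-tree case: for each $j$ you run a multi-gate version of the deletion argument from \cref{thm:wedge}, observing that excursions out of $K_m\cup T_j$ must exit and re-enter through the same gate $v_i$ with the relative order of the $K_m\cup T_j$ replicas frozen (every swap involves the coin's replica, which lies outside that set), so the reduced trajectory wanders within a single finite orbit of $\Theta_{K_m\vee_{v_j}T_j}$ up to conjugation; contractibility of that orbit (from \cref{cor:completetree}, or \cref{thm:complete} when $T_j$ is a point) bounds the winding differences, and overlapping the sets $K_m\cup T_j$ over $j$ plus the zero-sum of winding numbers finishes. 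The paper instead analyzes the orbit directly: it uses \cite[Lemma~5.1]{ADS} to show that each tree excursion returns as a \emph{rotation} of the diagram, so that the coin's itinerary on the core is the same $3$-cycle $v_a\to v_b\to v_c\to v_a$ it would follow on $K_m$; a time-reversal/conjugation argument in the style of \cref{thm:wedgecomplete} then shows the swaps during the first visit to each tree are undone during the second, pinning the winding number of every vertex outside $\{v_a,v_b,v_c\}$ at exactly $0$, after which equality of the three remaining winding numbers and the zero-sum conclude. Your approach is more modular and in fact proves a stronger statement (trees wedged onto the vertices of \emph{any} graph $H$ for which each single wedge $H\vee_{v_j}T_j$ is ensnaring), while the paper's yields finer orbit information (the $3$-cycle structure and exact winding numbers). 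Two small points to patch in a write-up: (i) handle the degenerate case where the coin never visits $K_m\cup T_j$ at all — then no two replicas of that set ever swap directly, so their winding numbers agree trivially; (ii) as in \cref{thm:wedge}, reduced diagrams should be treated up to rotation of the contracted cycle, which is harmless since $\Theta$ commutes with rotations and crossing counts are rotation-invariant.
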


\begin{proof}
Let $v_1$, $\dots$, $v_m$ be the vertices of $K_m$, and let $T_i$ be the tree wedged onto $v_i$. Any orbit in which the coin never leaves the vertices $v_1,\ldots,v_m$ is contractible, as the stone never moves. Now let $\mathcal O$ be an orbit in which the coin moves off $v_a$ and into $T_a\setminus \{v_a\}$ for some $a$. The same argument as in the proof of \cref{cor:completetree} (invoking \cite[Lemma~5.1]{ADS}) shows that the coin must eventually move onto some vertex $v_b$ for $b\neq a$.

Let $D_{a,1}$ be the stone diagram right before the coin first moves off $v_a$ into $T_a\setminus\{v_a\}$, and let $D_{a,1}'$ be the stone diagram right after it first moves back onto $v_a$. Then \cite[Lemma~5.1]{ADS} tells us that $D_{a,1}'$ is simply a rotation of $D_{a,1}$. The same holds every time the coin moves into and out of some $T_i\setminus \{v_i\}$. Thus, if we ignore any time the coin is not on a $v_i$, then the coin's movement behaves as it would on a $K_m$. In particular, it moves in a $3$-cycle $v_a\to v_b\to v_c\to v_a$ for some $c$. The stone's orientation reverses with each of these moves, so the direction in which the replica $\mathbf v_a$ swaps past the replica $\mathbf v_b$ alternates each time. Thus, $v_a$ and $v_b$ have the same winding number with respect to $\mathcal O$. Similarly, $v_b$ and $v_c$ have the same winding number with respect to $\mathcal O$. 

Consider two repetitions of the coin's $3$-cycle: $v_a\to v_b\to v_c\to v_a\to v_b\to v_c\to v_a$. Let $D$ be the stone diagram right before the coin first moves from $v_b$ to $v_c$, and let $D'$ be the stone diagram right after it first moves from $v_a$ to $v_b$. Then, similar to the proof of \cref{thm:wedgecomplete}, we know $D'$ can be obtained from taking $D$, swapping $\textbf v_a$ and $\textbf v_c$, and then taking a rotation. The rotation does not affect the stone's behavior, and neither $\textbf v_a$ nor $\textbf v_c$ is in $T_b$, so as in the proof of \cref{thm:conjugates}, the swaps that occur the first time the coin is in $T_b$ (after it moves onto $v_b$ but before it moves onto $v_c$) are undone by those that occur the second time the coin is in $T_b$, except for those involving $\textbf v_a$ or $\textbf v_c$.

A similar statement holds for the first and second times the coin is in $T_a$ and $T_c$, so the winding number of any vertex which is not $v_a$, $v_b$, or $v_c$ is $0$. Since $v_a$, $v_b$, and $v_c$ have the winding number and the sum of all winding numbers is $0$, every vertex has winding number $0$. It follows that $\mathcal O$ is contractible. As $\mathcal O$ was arbitrary, we conclude that $G$ is ensnaring. 
\end{proof}

\section{A Non-Ensnaring Configuration}\label{sec:nonensnaring}

The same idea used to prove \cref{thm:wedgecomplete} can also be used to prove certain graphs are not ensnaring. \cref{fig:configuration} illustrates the configuration described in the next theorem.

\begin{figure}[ht]
  \begin{center}
  \includegraphics[height=2cm]{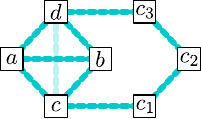}
  \end{center}
\caption{A schematic illustration of the configuration described in \cref{thm:local_not_ensnaring}. The edge between $c$ and $d$ is depicted as translucent to indicate that it may or may not be present.  }\label{fig:configuration}
\end{figure}

\begin{theorem}\label{thm:local_not_ensnaring}
Suppose a graph $G$ has vertices $a$, $b$, $c$, and $d$ with the following properties:
\begin{enumerate}
    \item $a$ has exactly $3$ neighbors, which are $b$, $c$, and $d$.
    \item $b$ has exactly $3$ neighbors, which are $a$, $c$, and $d$.
    \item There exists a path from $c$ to $d$ with at least two edges that does not pass through $a$ or $b$. (The edge between $c$ and $d$ may or may not be present.)
\end{enumerate}
Then $G$ is not ensnaring.
\end{theorem}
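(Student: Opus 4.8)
The plan is to translate everything into the combinatorial model of \cref{sec:prelim} and exploit the fact that $a$ and $b$ are \emph{true twins}. Indeed, conditions (1) and (2) say exactly that $N[a]=\{a,b,c,d\}=N[b]$, so the transposition $\sigma=(a\ b)$ is an automorphism of $G$ that fixes $c$, $d$, and every vertex of the $c$--$d$ path. First I would record the two symmetries of refractive toric promotion that this gives. On one hand, because $\Theta_G$ depends on $G$ only through adjacency, it is equivariant under the relabeling action of $\sigma$ on stone diagrams: $\Theta_G(\sigma D)=\sigma\,\Theta_G(D)$. On the other hand, \cref{thm:conjugates} says conjugation intertwines $\Theta_G$ with its inverse. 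Combining these, the involution $\tau(D)=\sigma(D)^\top$ satisfies
\[\tau\circ\Theta_G=\Theta_G^{-1}\circ\tau.\]

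With this in hand, the strategy is to produce a single non-contractible orbit. I would start from a stone diagram $D_0$ in which the replicas $\mathbf a$ and $\mathbf b$ sit on adjacent vertices of $\Cycle_n$ with the stone between them; since $a\sim b$, the first application of $\Theta_G$ is then an $ab$-crossing, and a direct check shows that this crossing realizes precisely the operation $\tau$, i.e.\ $\Theta_G(D_0)=\tau(D_0)$. Using the intertwining relation, this single identity propagates to the palindromic symmetry $\Theta_G^{\,1+i}(D_0)=\tau\!\left(\Theta_G^{-i}(D_0)\right)$ for all $i$, exactly as the reflection $D_{r+i}\leftrightarrow D_{r-i+1}^\top$ is used in the proof of \cref{thm:wedgecomplete}. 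Because $\sigma$ fixes every vertex other than $a$ and $b$, this reflection pairs each step with a mirror step whose effect on the displacement of any replica $\mathbf x$ with $x\notin\{a,b\}$ is exactly opposite; summing over the period gives $\mathfrak w_{\mathcal O}(x)=0$ for all $x\neq a,b$, while the label-swap built into $\sigma$ forces $\mathfrak w_{\mathcal O}(a)=-\mathfrak w_{\mathcal O}(b)$. Hence such an orbit is non-contractible if and only if $\mathbf a$ has nonzero winding.

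It therefore remains to arrange $D_0$ so that the replica $\mathbf a$ makes net progress around $\Cycle_n$. This is exactly where hypothesis (3) is used: the interior vertices of the $c$--$d$ path (which exist because the path has at least two edges and avoids $a$ and $b$) are non-neighbors of both $a$ and $b$, so the shuttling replica $\mathbf a$ can be carried past them instead of being stopped immediately. I would build $D_0$ as a ``staircase'' configuration in the spirit of the revolutionary construction in \cref{thm:revcycles}, choosing the cyclic order of the replicas so that across one full period the coin crosses $ab$ once while $\mathbf a$ is advanced a net nonzero number of steps. I expect the genuine obstacle to be precisely this last point: verifying that the net displacement of $\mathbf a$ over the period does not cancel to zero. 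The contrast with $K_4$ (which is ensnaring by \cref{thm:complete}) is instructive and guides the construction --- there $a$ and $b$ have no non-neighbors, so $\mathbf a$ never moves and has winding zero; the path of length at least two is exactly the feature that breaks this rigidity and lets $\mathbf a$ drift.
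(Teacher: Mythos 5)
Your symmetry framework is sound: since $N[a]=N[b]$, the relabeling $\sigma=(a\ b)$ commutes with $\Theta_G$, and combining this with \cref{thm:conjugates} does give the intertwining $\tau\circ\Theta_G=\Theta_G^{-1}\circ\tau$ for $\tau(D)=\sigma(D)^\top$; your check that an $ab$-crossing realizes $\tau$, and the resulting palindromic pairing of steps, correctly yields $\mathfrak w_{\mathcal O}(x)=0$ for all $x\notin\{a,b\}$ and $\mathfrak w_{\mathcal O}(a)=-\mathfrak w_{\mathcal O}(b)$ for any orbit through such a diagram. This is in fact a clean formalization of the ``almost conjugates'' device that the paper borrows from the proof of \cref{thm:wedgecomplete}. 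But the proposal stops exactly where the actual proof has to begin: you still must exhibit a single orbit in which $\mathbf a$ has nonzero winding, and you explicitly defer this (``I expect the genuine obstacle to be precisely this last point''). That step is not a routine verification --- it is the entire content of hypothesis (3), and without it your argument proves nothing (for $K_4$, which satisfies (1) and (2) but not (3), every such orbit has $\mathfrak w(a)=0$ and the graph is ensnaring by \cref{thm:complete}).

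The paper resolves this by a delicate explicit construction that your sketch does not supply. It first replaces the given $c$--$d$ path by one of minimal length, so that the path is induced (no chords except possibly $cd$); this is essential and absent from your proposal. It then places the replicas in the interleaved clockwise order $\mathbf c_m,\mathbf a,\mathbf c_{m-2},\mathbf c_{m-4},\ldots,\mathbf c_0,\ldots,\mathbf c_{m-3},\mathbf c_{m-1},\mathbf b$ with the stone at $\mathbf c_0$ pointing away from $\mathbf c_1$, and tracks the full period: the stone swaps $\mathbf a$ clockwise past $\mathbf b$, the palindromic symmetry returns it to a diagram where it then shuttles the path replicas $\mathbf c_0,\mathbf c_1,\ldots,\mathbf c_m$ forward in sequence (this is where the interleaving and the induced-path condition are used), after which $\mathbf d$ swaps with $\mathbf b$ and $\mathbf a$ again swaps \emph{clockwise} past $\mathbf b$, and the whole evolution reverses back to the start. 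The upshot is that over one period $\mathbf a$ crosses $\mathbf b$ only clockwise, so $\mathfrak w(a)\neq\mathfrak w(b)$. Your ``staircase in the spirit of \cref{thm:revcycles}'' is a plausible guess at such a configuration, but until you specify it and verify the crossing count, the proof has a genuine gap at its central point.
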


\begin{proof}
Let $c=c_0,c_1,\ldots,c_m=d$ be the vertices along a path from $c$ to $d$, where $m\geq 2$. Without loss of generality, assume this path is of minimal length; in particular, there are no other edges between vertices in this path other than the edge in the path itself, except possibly between $c$ and~$d$. 

Consider a stone diagram $D$ in which the vertex replicas are in clockwise order \[\mathbf d=\mathbf c_m,\mathbf a,\mathbf c_{m-2},\mathbf c_{m-4},\ldots,\mathbf c=\mathbf c_0,\ldots,\mathbf c_{m-3},\mathbf c_{m-1},\mathbf b,\] followed by the remaining replicas, with the stone starting at $\mathbf c_0$ and pointing away from $\mathbf c_1$. We know $c_0$ is not adjacent to any $c_i$ with $2\leq i\leq m-1$, so the stone will first swap $\mathbf c_0$ with one of $\mathbf a$ or $\mathbf b$, turning around while doing so. After this swap, the stone has $\mathbf c_0$ behind it, and it is on $\mathbf a$ or $\mathbf b$. It will encounter the other one of these two replicas before $\mathbf c$ or $\mathbf d$, so the stone will next swap $\mathbf a$ and $\mathbf b$, turning around while doing so. Observe that $\mathbf a$ swaps clockwise past $\mathbf b$, regardless of whether the stone is on $\bf a$ or $\bf b$. Similar to the proof of \cref{thm:wedgecomplete}, the diagrams before and after this swap are almost conjugates, differing by just a swap of $\mathbf a$ and $\mathbf b$, which correspond to symmetric vertices. Thus, we will eventually reach the stone diagram that is obtained from $D^\top$ by swapping $\mathbf a$ and $\mathbf b$; call this diagram $D_1$. Note that the stone coexists with $\mathbf c_0$ 
and points toward $\mathbf c_1$ in $D_1$. The order of the $\mathbf c_i$'s
in $D_1$ is such that when the stone turns around after swapping $\mathbf c_{j-1}$ and $\mathbf c_j$ for some $j<m-1$, the stone will then encounter only $\mathbf c_i$'s until it reaches $\mathbf c_{j+1}$. This is even true when $j=m-1$, except the stone will also move past $\mathbf b$ (note that $b$ and $c_{m-1}$ cannot be neighbors). Once the stone reaches $\mathbf c_m=d$, it will be pointing clockwise toward $\mathbf b$, so these two replicas will swap and the stone will turn counterclockwise. The stone will then encounter $\mathbf a$ before any other neighbor of $\mathbf b$, so $\mathbf a$ and $\mathbf b$ will swap. Notably, in this swap, $\mathbf a$ swaps clockwise past $\mathbf b$.

After $\mathbf a$ and $\mathbf b$ swap, the previous steps will reverse until we return to the stone diagram obtained from $D_1^\top$ by swapping $\mathbf a$ and $\mathbf b$, which is just $D$. This completes an orbit in which $\mathbf a$ only swapped clockwise past $\mathbf b$. Thus, $\mathbf a$ and $\mathbf b$ have unequal winding numbers in this orbit, so this orbit is not contractible, and $G$ is not ensnaring. 
\end{proof}

\cref{thm:local_not_ensnaring} highlights the fact that a graph being non-ensnaring is a very local property, as certain configurations in a graph can ``trap'' orbits and force them to be not contractible. While local conditions as in \cref{thm:wedgecomplete} can ensure a particular orbit is contractible, they cannot ensure a graph is ensnaring because of the possibility of ``trapped'' non-contractible orbits.

\section{Connected Components of the Complement Graph}\label{sec:connected_complement} 

The coin is a useful tool for analyzing orbits in sparse graphs, as it moves infrequently and its movement is very constrained. When looking at orbits in a dense graph $G$, it makes more sense to focus on when the coin does not move, rather than when it does. 

Let $\overline G$ denote the complement of $G$. That is, $\overline G$ is the graph with the same vertex set as $G$ such that two vertices are adjacent in $\overline G$ if and only if they are not adjacent in $G$. 

Consider a stone diagram $D$ in which the stone coexists with a replica ${\bf a}$ and points toward a replica ${\bf b}$. Let us say $D$ is \dfn{prerefractive} if the corresponding vertices $a$ and $b$ of $G$ are not adjacent in $G$. If $D$ is prerefractive, then we define the \dfn{bridged edge} associated to $D$ to be the directed edge $a\to b$; note that $a\to b$ is an orientation of an edge of $\overline G$. If $D$ is not prerefractive, then we define the bridged edge associated to $D$ to be the bridged edge associated to $\Theta_G^{-\ell}(D)$, where $\ell$ is the smallest positive integer such that $\Theta_{G}^{-\ell}(D)$ is prerefractive (if no such $\ell$ exists, then there is no bridged edge associated to $D$). 

\begin{proposition}\label{prop:bridge}
Let $\mathcal O$ be an orbit of $\Theta_G$. There is a connected component $C$ of $\overline G$ such that in every stone diagram in $\mathcal O$, the stone either coexists with the replica of a vertex in $C$ or points toward the replica of a vertex in $C$. 
\end{proposition}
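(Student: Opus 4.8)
The plan is to attach to each stone diagram $D\in\mathcal O$ the unordered pair $\{a,b\}$, where $\mathbf a$ is the replica the stone coexists with and $\mathbf b$ is the replica it points toward; the conclusion to be proved is exactly that some connected component $C$ of $\overline G$ satisfies $\{a,b\}\cap C\neq\emptyset$ for every $D\in\mathcal O$. I will take $C$ to be the $\overline G$-component containing all the bridged edges of $\mathcal O$ --- which first requires showing that these bridged edges do lie in a single component --- and then verify that the remaining (non-prerefractive) diagrams also meet $C$.

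First I would record how $\{a,b\}$ evolves under $\Theta_G$ by following the ordered triple $(a,b,z)$ of the three replicas at the stone's position and its two neighbors (coexisted-with, pointed-toward, and the one directly behind the stone). A direct check from the definition of $\Theta_G$ shows that a refraction step (the case $\{a,b\}\in E$) sends $(a,b,z)\mapsto(b,z,a)$, merely rotating the roles of the same three replicas, while a pass-through step (the case $\{a,b\}\notin E$) sends $(a,b,z)\mapsto(a,w,b)$, keeping $a$, demoting the old $\mathbf b$ to the trailing position, discarding $\mathbf z$, and drawing a fresh replica $\mathbf w$ in front. In particular, consecutive bridges always share a vertex: refraction gives $\{a,b\}\mapsto\{b,z\}$ and pass-through gives $\{a,b\}\mapsto\{a,w\}$.

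The crux is the claim that consecutive prerefractive diagrams have bridged edges lying in the same component of $\overline G$. Starting from a prerefractive diagram with bridge $\{a,b\}\in\overline G$, the next step is a pass-through, producing the triple $(a,w,b)$; the ensuing maximal run of refractions rotates the three replicas $a,w,b$, so the bridges that appear are precisely $\{a,w\}$, then $\{w,b\}$, then $\{b,a\}$. This run crosses only $G$-edges and halts at the first $\overline G$-edge among these three; since $\{b,a\}=\{a,b\}$ is already an $\overline G$-edge, it halts at $\{a,w\}$, $\{w,b\}$, or $\{a,b\}$, each of which shares a vertex with $\{a,b\}$ and hence lies in the same $\overline G$-component. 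As $\mathcal O$ is a finite cyclic orbit, iterating places every bridged edge of $\mathcal O$ in one component $C$. Every non-prerefractive diagram sits inside such a refraction run, so its bridge is $\{a,w\}$ or $\{w,b\}$ with $a,b\in C$, and therefore meets $C$ as well; combined with the prerefractive case, this yields $\{a,b\}\cap C\neq\emptyset$ for all $D\in\mathcal O$.

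I expect the main obstacle to be the degenerate orbits containing no prerefractive diagram at all, i.e., those in which every step refracts. There the stone never moves, only the three replicas straddling it participate, and they form a triangle in $G$ whose three edges are the only bridges that occur; there are then no bridged edges from which to build $C$. This case cannot be handled by the argument above and must be treated separately by exhibiting a single $\overline G$-component meeting all three edges of the triangle --- that is, one must argue that these three mutually $G$-adjacent vertices nevertheless share a component of $\overline G$ (or invoke an appropriate hypothesis, since this is visibly the most delicate point of the statement).
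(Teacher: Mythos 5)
Your treatment of orbits that contain a prerefractive diagram is correct and is essentially the paper's own proof. The paper also fixes a prerefractive diagram with bridged edge $a\to b$, lets $\mathbf x$ (your $\mathbf w$) be the replica sitting next to $\mathbf b$ on the side away from $\mathbf a$, and splits into the same three cases according to which of $\{a,x\}$, $\{b,x\}$ lie in $E$: the next prerefractive diagram occurs after $1$, $2$, or $3$ steps, its bridged edge is $\{a,x\}$, $\{x,b\}$, or $\{a,b\}$ respectively (so it shares a vertex with $\{a,b\}$ and lies in the same component of $\overline G$), and in the intermediate diagrams the stone coexists with or points toward $\mathbf a$ or $\mathbf b$. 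Your bookkeeping via the triple $(a,b,z)$ and the transition rules $(a,b,z)\mapsto(b,z,a)$ (refraction) and $(a,b,z)\mapsto(a,w,b)$ (pass-through) is just a compact reorganization of that same case analysis.

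The divergence is the degenerate case, and there your caution is vindicated rather than a defect. The paper disposes of orbits with no prerefractive diagram in one sentence: the proposition is ``trivial because the stone never moves.'' That is too quick, and the statement in fact fails there, for exactly the reason you isolated: in such an orbit the stone stays on a fixed vertex of $\Cycle_n$ while the three replicas straddling it --- of vertices $a,b,z$ forming a triangle in $G$ --- cycle past it, so the (coexist, point) pairs occurring in $\mathcal O$ are $\{a,b\}$, $\{b,z\}$, $\{z,a\}$, and the conclusion requires a component of $\overline G$ containing at least two of $a,b,z$. For $G=K_n$ with $n\ge 3$, every orbit is of this type, yet every component of $\overline{K_n}$ is a single vertex, so no component meets all three pairs: \cref{prop:bridge} as literally stated is false. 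The statement becomes true (and your argument proves it) once one adds the hypothesis that $\mathcal O$ contains a prerefractive diagram, and that is the only situation in which the paper applies it --- in the proof of \cref{thm:complcomp}, orbits without prerefractive diagrams are handled separately and never fed into \cref{prop:bridge}. So you have not missed an idea; you completed the case that matters and correctly identified the one point where the paper's own proof is erroneous.
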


\begin{proof}
If $\mathcal O$ contains no prerefractive stone diagrams, then the proposition is trivial because the stone never moves when we apply $\Theta_G$ to a stone diagram in $\mathcal O$. Now assume there is a prerefractive stone diagram $D$ in $\mathcal O$, and let $a\to b$ be the bridged edge associated to $D$. Let $k$ be the smallest positive integer such that $\Theta_{G}^k(D)$ is prerefractive. In $D$, the stone is on $\mathbf a$ and points toward $\mathbf b$. Let $\mathbf x$ be the replica that sits next to ${\bf b}$ in $D$ and is not ${\bf a}$. We claim that in each of the stone diagrams $\Theta_{G}^i(D)$ for $1\leq i\leq k$, the stone either coexists with ${\bf a}$ or ${\bf b}$ or points toward ${\bf a}$ or ${\bf b}$. Note that this claim implies the proposition. To prove the claim, we consider three cases based on which of $a$ and $b$ are neighbors of $x$ in $G$. 

\textbf{Case 1:} Suppose $a$ and $x$ are not adjacent in $G$. Then in $\Theta_G(D)$, the stone coexists with ${\bf a}$ and points toward ${\bf x}$. In this case, we have $k=1$, and the claim holds. 

\textbf{Case 2:} Suppose $a$ and $x$ are adjacent in $G$ and that $b$ and $x$ are not adjacent in $G$. Then in $\Theta_G(D)$, the stone coexists with ${\bf a}$ and points toward $\mathbf x$. In $\Theta_G^2(D)$, the stone coexists with ${\bf x}$ and points toward ${\bf b}$. In this case, we have $k=2$, and the claim holds. 

\textbf{Case 3:} Suppose both $a$ and $b$ are adjacent to $x$ in $G$. Then in $\Theta_G(D)$, the stone coexists with ${\bf a}$ and points toward $\mathbf x$. In $\Theta_G^2(D)$, the stone coexists with $\mathbf x$ and points toward $\mathbf b$. In $\Theta_G^3(D)$, the stone coexists with ${\bf b}$ and points toward ${\bf a}$. In this case, we have $k=3$, and the claim holds.
\end{proof}

Henceforth, we refer to prerefractive stone diagrams corresponding to the three cases above as \dfn{pre-pivotal}, \dfn{post-pivotal}, and \dfn{flipping}, respectively.

\cref{prop:bridge} tells us that, in some sense, the connected components of $\overline{G}$ behave separately. To formalize this idea, we need the following definition. Given a graph $H$ with $m$ vertices and an integer $n\ge m$, define the \dfn{$n$-vertex complement} of $H$, denoted $\compl_n(H)$, to be the $n$-vertex graph whose complement consists of $H$ together with $n-m$ additional isolated vertices.

\begin{theorem}\label{thm:complcomp}
A graph $G$ with $n$ vertices is ensnaring if and only if the $n$-vertex complement of each connected component of $\overline{G}$ is ensnaring. 
\end{theorem}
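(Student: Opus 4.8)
The plan is to reduce the whole statement to a comparison between $\Theta_G$ and each $\Theta_{\compl_n(C)}$, using \cref{prop:bridge} to organize orbits by the single connected component of $\overline{G}$ that controls them. The crucial ingredient I would establish first is a local agreement lemma: for any $n$-vertex graph $H$, any connected component $C$ of $\overline{H}$, and any stone diagram $D$ in which the stone coexists with or points toward the replica of a vertex of $C$, one has $\Theta_H(D)=\Theta_{\compl_n(C)}(D)$. This holds because $\Theta_H$ and $\Theta_{\compl_n(C)}$ swap the same pair of replicas (this choice is read off from $D$ alone) and branch only on whether the two corresponding vertices $a,b$ are adjacent, and $H$ and $\compl_n(C)$ share exactly the same edges among pairs with at least one endpoint in $C$: if $a,b\in C$, adjacency is decided by the induced subgraph $C$ of either complement; and if exactly one of $a,b$ lies in $C$, then $a$ and $b$ lie in different components of both $\overline{H}$ and $\overline{\compl_n(C)}$, hence are adjacent in both $H$ and $\compl_n(C)$.

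Next I would dispose of the degenerate orbits. An orbit containing no prerefractive stone diagram is exactly one in which the stone never moves, and I claim every such orbit is contractible. Starting from such a diagram $D$, every step is a refraction, and the refraction update $(v,i,\epsilon)\mapsto(\overline{s_i}v,\,i-\epsilon,\,-\epsilon)$ is independent of the underlying graph; consequently the orbit of $\Theta_H$ through $D$ coincides verbatim with the orbit of $\Theta_{K_n}$ through $D$, under which the stone also never moves. Since $K_n$ is ensnaring for $n\ge 3$ by \cref{thm:complete}, this common orbit has winding vector $0$, so the orbit is contractible. (Because the winding vector of an orbit is computed purely from the sequence of stone diagrams, two maps with identical orbits through $D$ yield identical winding vectors.)

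With these two ingredients the theorem follows by sorting orbits via \cref{prop:bridge}. For the ``if'' direction, let $\mathcal O$ be any orbit of $\Theta_G$ and let $C$ be the component of $\overline G$ furnished by \cref{prop:bridge}; the agreement lemma shows $\Theta_G$ and $\Theta_{\compl_n(C)}$ coincide on every diagram of $\mathcal O$, so $\mathcal O$ is also an orbit of $\Theta_{\compl_n(C)}$ with the same winding vector, which vanishes because $\compl_n(C)$ is ensnaring. For the ``only if'' direction, fix a component $C$ of $\overline G$ and an orbit $\mathcal O'$ of $\Theta_{\compl_n(C)}$. If $\mathcal O'$ has no prerefractive diagram it is contractible by the previous paragraph. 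Otherwise $\mathcal O'$ carries a bridged edge, which is an edge of $\overline{\compl_n(C)}=C\sqcup(\text{isolated vertices})$ and hence lies in $C$ (forcing $|C|\ge 2$); then \cref{prop:bridge} guarantees the stone always coexists with or points toward a vertex of $C$, so the agreement lemma makes $\mathcal O'$ an orbit of $\Theta_G$ with the same winding vector, which vanishes because $G$ is ensnaring.

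The main obstacle, and the step demanding the most care, is the bookkeeping around the components of $\overline{\compl_n(C)}$: they are $C$ together with $n-|C|$ isolated vertices, and the orbits ``attached'' to a singleton component are precisely the stone-never-moves orbits. These do \emph{not} correspond to orbits of $\Theta_G$ in general (the triangle of active vertices supporting such an orbit in $\compl_n(C)$ need not remain a triangle in $G$), so the naive hope of a bijection between all orbits of $\Theta_G$ and all orbits of $\Theta_{\compl_n(C)}$ fails. Recognizing that these problematic orbits are exactly the prerefractive-free ones and handling them uniformly through the comparison with $K_n$ is what closes the gap in the ``only if'' direction; the rest is a routine matching of orbits and winding vectors.
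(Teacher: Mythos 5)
Your proposal is correct and follows essentially the same route as the paper's proof: both dispose of the orbits containing no prerefractive stone diagrams separately, and then use \cref{prop:bridge} together with the step-by-step agreement $\Theta_G^i(D)=\Theta_{\compl_n(C)}^i(D)$ to identify the remaining orbits of $\Theta_G$ (and their winding vectors) with orbits of $\Theta_{\compl_n(C)}$. Your explicit treatment of the reverse direction---checking that every orbit of $\Theta_{\compl_n(C)}$ with a prerefractive diagram has its bridged edges in $C$ and is therefore an orbit of $\Theta_G$, and handling the stone-never-moves orbits of $\Theta_{\compl_n(C)}$ via comparison with $\Theta_{K_n}$ and \cref{thm:complete}---just makes explicit bookkeeping that the paper's proof leaves implicit.
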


\begin{proof}
Let $\Upsilon$ be the set of orbits of $\Theta_G$ that do not contain prerefractive stone diagrams, and let $\Upsilon'$ be the set of orbits of $\Theta_G$ that are not in $\Upsilon$. If we traverse an orbit in $\Upsilon$, then the stone never moves. This shows that all orbits in $\Upsilon$ are contractible. Thus, $G$ is ensnaring if and only if all of the orbits in $\Upsilon'$ are contractible. 

For each $\mathcal O\in\Upsilon'$, it follows from \cref{prop:bridge} that there is a unique connected component $C^{\mathcal O}$ of $\overline G$ such that all of the bridged edges associated to stone diagrams in $\mathcal O$ are in $C^{\mathcal O}$. For each connected component $C$ of $\overline G$, let $\Upsilon_C'=\{\mathcal O\in\Upsilon':C^{\mathcal O}=C\}$. Consider $\mathcal O\in\Upsilon_C'$. \cref{prop:bridge} tells us that in each stone diagram $D\in\mathcal O$, the stone either coexists with the replica of a vertex in $C$ or points toward the replica of a vertex in $C$. This implies that the orbit $\mathcal O$ does not depend on which edges are present between vertices outside of $C$. More precisely, $\Theta_{\compl_n(C)}^i(D)=\Theta_G^i(D)$ for every integer $i$. As $\mathcal O$ was chosen arbitrarily, this shows that $\compl_n(C)$ is ensnaring if and only if all of the orbits in $\Upsilon_C'$ are contractible. As this holds for every connected component $C$ of $\overline G$, the desired result follows. 
\end{proof}

Whether the $n$-vertex complement of a graph is ensnaring depends on $n$, even though the additional vertices all behave the same way. To what extent does $n$ matter? The following conjecture suggests an answer to this question. 

\begin{conjecture}\label{conj:complpar}
Fix an $m$-vertex graph $G$. Then for all integers $n>m$, whether $\compl_n(G)$ is ensnaring depends only on the parity of $n$.
\end{conjecture}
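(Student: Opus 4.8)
The plan is to prove the single two-step invariance statement that, for every graph $G$ with $m$ vertices and every $n\ge 3$ with $n>m$, the graph $\compl_n(G)$ is ensnaring if and only if $\compl_{n+2}(G)$ is ensnaring; since this shows ensnaring-ness is constant on each parity class of $n$ (the degenerate value $n=2$ being governed by the convention of \cref{rem:2vertices}), it yields the conjecture. Before attacking this, I would simplify $G$. Applying \cref{thm:complcomp} to $\Gamma=\compl_n(G)$, whose complement is $G$ together with $n-m$ isolated vertices, gives that $\compl_n(G)$ is ensnaring if and only if $\compl_n(C)$ is ensnaring for every connected component $C$ of $G$ and $\compl_n(\mathrm{pt})=K_n$ is ensnaring. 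As $K_n$ is ensnaring for $n\ge 3$ (\cref{thm:complete}), it suffices to prove the two-step invariance when $G$ is connected. It is worth recording that $\compl_n(G)$ is the join of $\overline G$ with a clique on the remaining $n-m$ vertices, so those $n-m$ vertices are \emph{universal} (adjacent to all others), and the conjecture is precisely the assertion that ensnaring-ness of this join depends only on the parity of the number of universal vertices.

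The heart of the argument is a correspondence between orbits of $\Theta_{\compl_{n+2}(G)}$ and orbits of $\Theta_{\compl_n(G)}$ obtained by deleting two universal replicas. Because a universal vertex is adjacent to everything, the stone can never treat a universal replica as a window: every encounter with one is a refraction. Mimicking the reduction $D\mapsto D^*$ used in the proofs of \cref{thm:wedge,thm:union}, I would fix two of the universal replicas, delete them, and contract the two vacated edges of $\Cycle_{n+2}$ to obtain a diagram $D^\flat$ on $\Cycle_n$. The goal is to show that, except during the transitions in which the stone actually collides with one of the two deleted replicas, $D^\flat$ evolves under $\Theta_{\compl_n(G)}$, while across such a collision the reduced diagram changes only by a conjugation in the sense of \cref{thm:conjugates}. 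Pushing the winding numbers through this correspondence should produce an affine relation between the winding vector of the $\compl_{n+2}(G)$-orbit and that of the associated $\compl_n(G)$-orbit which preserves the property of being zero, so that the two systems are ensnaring together.

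The reason two universal vertices, rather than one, is the correct increment is the orientation bookkeeping. The stone reverses orientation at each refraction, and every passage of the stone past a universal replica is a refraction; consequently the parity of the number of universal replicas the stone processes while completing a closed walk is coupled to whether it returns with its original orientation (this is the same phenomenon that makes odd and even cycles behave oppositely in \cref{thm:cycles}). Adding two universal vertices changes this count by an even amount at each relevant stage and so preserves the coupling, whereas adding a single one would flip it. This matches the available data: for $G=K_{\ell,r}$ one has that $\compl_n(G)$ is ensnaring if and only if $n-\ell-r$ is odd, which is invariant under $n\mapsto n+2$ but not under $n\mapsto n+1$, while for $G=K_m$ the graphs $\compl_n(G)$ are complete split graphs, which are never ensnaring; both behaviors are consistent with $2$-periodicity but with no cleaner ``single vertex flips'' rule.

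The hard part will be that, unlike the disjoint-union setting of \cref{thm:union} in which the ignored component is never visited by the coin, here the coin does land on universal vertices and the stone genuinely carries universal replicas around $\Cycle_{n+2}$. Thus $D\mapsto D^\flat$ is not a passive projection: one must follow the two deleted universal replicas through every collision with the stone and with each other, and prove that their net effect on the motion of the surviving replicas is exactly the parity-controlled shift above. Concretely, I expect the difficulty to concentrate in ruling out that the two deleted replicas become entangled with the $\overline G$-part in a way that alters the \emph{relative} winding numbers of the $G$-replicas; establishing the ``reflecting-wall'' behavior of universal replicas rigorously will likely require a collision-by-collision analysis in the spirit of the pre-pivotal/post-pivotal/flipping trichotomy of \cref{prop:bridge}.
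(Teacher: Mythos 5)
This statement is \cref{conj:complpar}, which the paper leaves as an \emph{open conjecture}: there is no proof in the paper to compare against, so your attempt has to stand on its own as a complete argument, and it does not. What you have written is a strategy outline: the decisive claims are introduced with ``the goal is to show,'' ``should produce an affine relation,'' and ``I expect the difficulty to concentrate in,'' and none of them is carried out. The parts that are actually established are the framing steps: the reduction to connected $G$ via \cref{thm:complcomp} is correct (the components of $\overline{\compl_n(G)}$ are the components of $G$ together with $n-m$ isolated vertices, whose $n$-vertex complements are $K_n$, ensnaring for $n\ge 3$ by \cref{thm:complete}), and the observation that the $n-m$ added vertices are universal is correct. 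Everything after that is aspiration, and your own parenthetical about $n=2$ already signals trouble: for $m=1$ one has $\compl_2(G)=K_2$ not ensnaring while $\compl_4(G)=K_4$ is ensnaring, so the two-step invariance genuinely requires $n\ge 3$ and cannot be waved off to \cref{rem:2vertices}.

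More seriously, the central mechanism you propose fails on a direct check. Suppose the stone sits at position $j$ with orientation $\epsilon$, coexisting with $\mathbf{a}$ and pointing toward a universal replica $\mathbf{u}$, and let $\mathbf{c}$ be the replica at $j-\epsilon$. Both of the next two steps are refractions (a universal vertex is adjacent to everything), and after them the stone is again at position $j$ with orientation $\epsilon$, now coexisting with $\mathbf{c}$ and pointing toward $\mathbf{a}$, while $\mathbf{u}$ has jumped to $j-\epsilon$. In the reduced diagram $D^\flat$ the cyclic order of the surviving replicas is unchanged, but the stone has been displaced one replica \emph{backward} with its orientation intact. This is neither an application of $\Theta_{\compl_n(G)}$ nor a conjugation in the sense of \cref{thm:conjugates} (conjugation reverses the orientation), so the dichotomy ``evolves by $\Theta_{\compl_n(G)}$ except for conjugations at collisions'' is false at the very first collision. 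These backward displacements accumulate---this drift is exactly what the paper exploits in \cref{thm:complcomplete,thm:oddglue} to construct non-contractible orbits---so the reduced trajectory is not an orbit of the smaller system in any direct sense; tracking such drift is what the bridged-edge and pivot machinery of \cref{prop:bridge} is designed for, and any serious attack on \cref{conj:complpar} should probably be phrased in that language rather than via deletion of replicas. Finally, even granting some orbit correspondence, ``all winding numbers are zero'' does not transfer for free: as \cref{thm:union} shows, reductions of this kind couple the replica winding numbers to the stone's winding number (which is precisely why the revolutionary condition appears there), and your sketch addresses neither this coupling nor the reverse direction of the equivalence, i.e., realizing every orbit of $\compl_n(G)$ from one of $\compl_{n+2}(G)$. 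As it stands, the proposal has a genuine gap at its core.
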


%\section{Complete Graphs as Complement Components}

\begin{theorem}\label{thm:complcomplete}
For any integers $n\ge m\ge 2$, the graph $\compl_n(K_m)$ is not ensnaring.
\end{theorem}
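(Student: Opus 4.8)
The plan is to exhibit a single non-contractible orbit of $\Theta_{\compl_n(K_m)}$; by \cref{prop:main} this suffices. First I would record the structure of $G=\compl_n(K_m)$: its vertex set splits into an independent set $I$ of size $m$ (coming from the clique $K_m$ in $\overline G$) together with a clique $Q$ of size $n-m$, and every vertex of $Q$ is adjacent to every other vertex, so the $Q$-vertices are universal. When $n-m=0$ the graph is edgeless and when $n-m=1$ it is the star $K_{1,m}$; both are bipartite, so \cref{thm:expelling} already shows they are not ensnaring (the case $n=2$ is covered by \cref{rem:2vertices}). Thus the genuine work is in the regime $n-m\ge 2$, where $G$ contains triangles and $I$ contains at least two nonadjacent vertices. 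Note also that the only non-singleton connected component of $\overline G$ is this $K_m$, so by \cref{prop:bridge} every orbit whose stone actually moves is localized to the independent set $I$.

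In this regime the stone's itinerary is tightly constrained by universality: a stone carrying the replica of an $I$-vertex passes straight through every other $I$-replica (all nonadjacent) and must bounce the first time it meets a $Q$-replica, while a stone carrying a $Q$-replica bounces off whatever replica it next points at. I would use this to follow an explicit orbit. Concretely, I would start from the arrangement placing the $m$ replicas of $I$ on consecutive positions of $\Cycle_n$ and the $Q$-replicas on the remaining positions, with the stone set inside the block of $I$-replicas, and track the resulting sweeps: the stone shuttles across the $I$-block and reflects off the $Q$-replicas, slowly transporting $I$-replicas clockwise and $Q$-replicas counterclockwise. The goal is to show that over one full period the replica initially carried by the stone has a nonzero winding number, making the winding vector nonzero.

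The cleanest way to force the winding to be nonzero is to arrange the initial diagram to be symmetric under time reversal up to a graph automorphism. Choosing two symmetric vertices $a,b$ (either two universal vertices of $Q$ or two nonadjacent vertices of $I$) and a starting diagram $D$ for which $\Theta_G^t(D)$ equals the $a\leftrightarrow b$ relabeling of $D^\top$ for some $t$, \cref{thm:conjugates} makes the orbit palindromic, exactly as in the proof of \cref{thm:local_not_ensnaring}. The palindrome forces every swap not involving $\mathbf a$ or $\mathbf b$ to be undone, so all replicas other than $\mathbf a,\mathbf b$ acquire winding $0$; since the windings sum to $0$ this gives $\mathfrak w_{\mathcal O}(a)=-\mathfrak w_{\mathcal O}(b)$, and these are nonzero once I verify that $\mathbf a$ crosses $\mathbf b$ a net nonzero number of times in a fixed rotational sense. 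I expect the main obstacle to be exactly this last point: because the $Q$-vertices are universal, the gates at which the stone turns around are leaky (the reflecting $Q$-replica itself drifts into the $I$-block), so confirming that the chosen configuration really has the time-reversal symmetry, and that the distinguished pair crosses strictly one way, requires carefully controlling the stone's trip through the universal clique rather than a short local argument.
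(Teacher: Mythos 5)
Your structural setup and the reduction of the trivial cases ($n-m\le 1$ via bipartiteness) are fine, but the heart of the proof is missing, and the mechanism you propose for it cannot succeed on this graph. The key observation you never make is the one the paper's proof turns on: since any two non-adjacent vertices of $\compl_n(K_m)$ have identical neighborhoods, no stone diagram is post-pivotal (in the terminology following \cref{prop:bridge}). Consequently, starting from any prerefractive diagram with the stone pointing clockwise, the stone points clockwise at \emph{every} later prerefractive moment: a pre-pivotal step advances it one position clockwise, and a flipping encounter with a universal replica takes three steps, advances the stone one position clockwise, and drags that universal replica exactly two positions counterclockwise. So the stone circulates monotonically clockwise forever, while any fixed universal replica $\mathbf x$ moves only counterclockwise and must be passed by the stone at least once per period; hence $\mathfrak{w}_{\mathcal O}(x)<0$ and the orbit is not contractible. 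Note that your picture of the dynamics is inaccurate: the stone does not ``shuttle across the $I$-block and reflect off the $Q$-replicas''; it is the \emph{coin} that bounces in and out of the clique, while the stone never durably reverses.

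This misreading matters because your proposed device --- a palindromic orbit coming from a time-reversal symmetry $\Theta_G^t(D)=\sigma(D^\top)$ with $\sigma$ swapping two symmetric vertices $a,b$ --- does not exist here. Palindromicity would force every replica other than $\mathbf a,\mathbf b$ to have winding number $0$ and would give $\mathfrak{w}_{\mathcal O}(a)=-\mathfrak{w}_{\mathcal O}(b)$. But by the monotonicity above, in any orbit containing a prerefractive diagram \emph{all} universal replicas have strictly nonzero winding numbers of the same sign: if $a,b\in I$ this contradicts the vanishing of the $Q$-windings, and if $a,b\in Q$ it contradicts $\mathfrak{w}_{\mathcal O}(a)=-\mathfrak{w}_{\mathcal O}(b)$; orbits with no prerefractive diagram have a motionless stone and are contractible, hence useless. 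The palindrome arguments of \cref{thm:local_not_ensnaring} and \cref{thm:wedgecomplete} work precisely because a refraction between two symmetric vertices durably reverses the stone's progress, producing near-conjugate diagrams; in $\compl_n(K_m)$ such reversal events never occur, which is exactly why the orbits are monotone rather than palindromic. The repair is not the ``careful control of the stone's trip through the clique'' you anticipate, but the short monotonicity argument above.
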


\begin{proof}
The result is trivial when $n=m$ since then $\compl_n(K_m)$ consists of $n$ isolated vertices. Thus, we may assume $n>m$. Fix a vertex $x$ in $\compl_n(K_m)$ that is adjacent to all other vertices. 

Let us iteratively apply $\Theta_{\compl_n(K_m)}$ starting with a prerefractive stone diagram $D$. Without loss of generality, assume the stone points clockwise in $D$. Any two vertices that are not adjacent in $\compl_n(K_m)$ have the same set of neighbors. Therefore, no stone diagram is post-pivotal. After one step from a pre-pivotal stone diagram or three steps from a flipping stone diagram, the stone moves a net of one step clockwise and keeps its clockwise orientation. Thus, the stone will eventually point toward $\mathbf x$. When the stone points toward $\mathbf x$, it must be pointing clockwise, so it swaps $\mathbf x$ with its current replica, moving it one step counterclockwise. It then swaps $\mathbf x$ with the replica the stone points toward, moving $\mathbf x$ another step counterclockwise. Thus, $\mathbf x$ only moves counterclockwise, so the orbit of $\Theta_{\compl_n(K_m)}$ containing $D$ is not contractible. Hence, $\compl_n(K_m)$ is not ensnaring.
\end{proof}

The following corollary is immediate from \cref{thm:complcomp,thm:complcomplete}. 

\begin{corollary}\label{cor:complcomplete}
If $\overline G$ has a connected component that is complete and has at least two vertices, then $G$ is not ensnaring.
\end{corollary}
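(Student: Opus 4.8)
The plan is to derive this as an immediate consequence of the two theorems that precede it, \cref{thm:complcomp} and \cref{thm:complcomplete}. The corollary asserts that if $\overline G$ has a connected component that is complete with at least two vertices, then $G$ is not ensnaring; so I would begin by letting $C$ be such a connected component, say a complete graph on $m$ vertices with $m\geq 2$, where $G$ has $n$ vertices (so $n\geq m$ since $C$ is a component of $\overline G$).

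The first step is to observe that the $n$-vertex complement of $C$ is precisely $\compl_n(K_m)$, since $C\cong K_m$. By \cref{thm:complcomplete}, applied with these integers $n\geq m\geq 2$, the graph $\compl_n(K_m)$ is not ensnaring. The second step is to invoke \cref{thm:complcomp}, which states that $G$ is ensnaring if and only if the $n$-vertex complement of \emph{each} connected component of $\overline G$ is ensnaring. Since we have exhibited one connected component of $\overline G$ (namely $C$) whose $n$-vertex complement fails to be ensnaring, the biconditional forces $G$ itself to be non-ensnaring. This completes the argument.

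I would keep the proof to a single short paragraph, as there is no real obstacle here: the only substantive content has been packaged into the two cited theorems. The one point worth stating carefully is the identification $\compl_n(C)=\compl_n(K_m)$, which is purely a matter of matching definitions (the $n$-vertex complement of an $m$-vertex graph $H$ is the graph whose complement is $H$ together with $n-m$ isolated vertices), and the observation that since $C$ has at least two vertices we indeed have $m\geq 2$ as required by the hypothesis of \cref{thm:complcomplete}. Because the corollary is flagged in the excerpt as ``immediate from'' the two preceding results, I expect that the intended write-up is essentially a one-line deduction, and the main thing to get right is citing the correct direction of the biconditional in \cref{thm:complcomp}.
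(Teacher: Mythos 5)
Your proposal is correct and matches the paper exactly: the paper presents this corollary as immediate from \cref{thm:complcomp,thm:complcomplete}, which is precisely the two-step deduction you give (identify the complete component $C\cong K_m$, apply \cref{thm:complcomplete} to see $\compl_n(C)$ is not ensnaring, then use the biconditional in \cref{thm:complcomp}). No gaps; your write-up is, if anything, more detailed than the paper's.
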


%\section{Complete Bipartite Graphs as Complement Components}\label{sec:oddglue}
The operations in \cref{sec:wedgeunion} tend to create sparse graphs with large sets of vertices that do not interact. By looking at complements of graphs, we can get ensnaring graphs far denser than most examples we have seen. In what follows, we let $K_{\ell,r}$ denote the complete bipartite graph with partite sets of sizes $\ell$ and $r$. 

\begin{theorem}\label{thm:oddglue}
Let $\ell,r,n$ be integers such that $\ell+r<n$. The graph $\compl_n(K_{\ell, r})$ is ensnaring if and only if $n-\ell-r$ is odd and $\ell$ and $r$ are not both $1$.
\end{theorem}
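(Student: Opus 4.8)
The plan is to read off the structure of $G=\compl_n(K_{\ell,r})$ and then analyze the refractive toric promotion dynamics $\Theta_G$ directly in the stone/coin model. Writing $z=n-\ell-r\ge 1$ for the number of isolated vertices of $\overline G$ beyond $K_{\ell,r}$ (and assuming the partite classes have sizes $\ell,r\ge 1$), the two partite classes of $K_{\ell,r}$ become cliques $L$ and $R$ (of sizes $\ell$ and $r$) in $G$, every cross pair $L$--$R$ becomes a non-edge, and the $z$ extra vertices become a set $Z$ of universal vertices. Thus the only non-adjacent pairs of $G$ are the $L$--$R$ pairs. In the stone model this means that a replica of a vertex in $L$ is transparent to (passes straight through) every replica of a vertex in $R$ and vice versa, while a replica of a vertex in $Z$ is opaque to every replica. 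By \cref{prop:bridge} every orbit that contains a prerefractive diagram has all of its bridged edges inside the single component $K_{\ell,r}$ of $\overline G$, and orbits with no prerefractive diagram are automatically contractible because the stone never moves; so it suffices to understand the orbits in which the stone carries $L$- and $R$-replicas past one another.

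First I dispose of the exceptional case: since $K_{1,1}=K_2$, we have $\compl_n(K_{1,1})=\compl_n(K_2)$, which is not ensnaring by \cref{thm:complcomplete}. Hence I may assume $(\ell,r)\neq(1,1)$, equivalently $\max(\ell,r)\ge 2$, and the remaining content is to show that $\compl_n(K_{\ell,r})$ is ensnaring exactly when $z$ is odd. The governing local computation is this: when the stone carries an $L$- or $R$-replica in a fixed direction and meets a universal replica $\mathbf z$, it ``tunnels'' through $\mathbf z$ in two steps, resuming its original direction while $\mathbf z$ is displaced two positions the opposite way; by contrast, when it meets a same-class replica it refracts and reverses direction. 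So the universal replicas behave like the one-directionally-wound replicas in the proof of \cref{thm:complcomplete}, except that the same-class reversals --- available precisely because some clique has at least two vertices --- can force the stone to tunnel through a given $\mathbf z$ equally often in each direction.

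For sufficiency ($z$ odd), I will follow the template of \cref{thm:cycles}. Since $Z\neq\emptyset$ makes $G$ connected, by \cref{prop:main} it is enough to show that any two $G$-adjacent vertices $a,b$ have equal winding number in an arbitrary orbit $\mathcal O$; this forces all winding numbers equal, hence all $0$ since they sum to $0$. Every swap of the replicas $\mathbf a$ and $\mathbf b$ is a refraction and reverses the stone's orientation, so it suffices to prove that the stone's orientation alternates between consecutive $\mathbf a$--$\mathbf b$ swaps; then $\mathbf a$ passes $\mathbf b$ equally often clockwise and counterclockwise. This alternation amounts to the statement that the number of refractions occurring strictly between two consecutive $\mathbf a$--$\mathbf b$ swaps is even, and this is exactly the parity that I expect $z$ odd to control: the intervening refractions record one excursion of the coin through the two cliques $L\cup Z$ and $R\cup Z$ glued along $Z$, and counting them modulo $2$ should reduce, via the tunnel/reversal bookkeeping above and the time-reversal symmetry of \cref{thm:conjugates} (used as in \cref{thm:wedgecomplete,thm:multitree}), to the parity of $z$.

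For necessity I must show that $z$ even produces a non-contractible orbit. Mirroring \cref{thm:complcomplete}, I will build a highly symmetric starting diagram in which, over one full pass of the stone, every tunnel through a fixed universal replica $\mathbf z$ happens in the same direction, so that $\mathbf z$ only ever moves one way around $\Cycle_n$ and acquires nonzero winding number; the hypothesis that $z$ is even is what should make this configuration close up consistently (the stone's orientation must realign after it has interacted with all $z$ universal replicas and the $L$- and $R$-cliques). By \cref{prop:main} such an orbit witnesses that $G$ is not ensnaring. The main obstacle throughout is the interaction of the three replica classes: because the carried replica changes type at each tunnel, the stone's direction is not monotone, and one must track precisely how the transparent $L$--$R$ passes interleave with the opaque $Z$-tunnels and the same-class reversals. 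Turning the qualitative ``alternation'' and ``one-directional winding'' pictures into a rigorous count whose parity is pinned to $z\bmod 2$ --- and checking that $\max(\ell,r)\ge 2$ is exactly the hypothesis that breaks the $\compl_n(K_2)$ behavior when $z$ is odd --- is the delicate heart of the argument.
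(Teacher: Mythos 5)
Your proposal has the right skeleton --- the $L$/$R$/$Z$ structure, the disposal of $(\ell,r)=(1,1)$ via \cref{thm:complcomplete}, and the reduction to winding vectors via \cref{prop:main} --- and this skeleton matches the paper's. But both substantive directions are left as plans rather than arguments, as you yourself concede when you call the parity bookkeeping ``the delicate heart of the argument.'' For necessity ($z$ even), the entire content of the paper's proof is an \emph{explicit} initial diagram: the universal replicas (the paper's $\mathbf x_1,\ldots,\mathbf x_m$) placed in clockwise cyclic order, the $B$-replicas in the arc from $\mathbf x_m$ to $\mathbf x_1$, the $A$-replicas other than $\mathbf a_1$ in the arc from $\mathbf x_1$ to $\mathbf x_2$, and $\mathbf a_1$ one step counterclockwise of $\mathbf b_1$ with the stone on $\mathbf a_1$ pointing at $\mathbf b_1$; one then verifies that the bridged edge flips between pointing $A\to B$ and $B\to A$ exactly once at each $\mathbf x_i$, so that with an even number of flips per revolution the pattern reproduces itself and $\mathbf x_1$ moves only counterclockwise. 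Saying ``I will build a highly symmetric starting diagram\dots the hypothesis that $z$ is even is what should make this configuration close up consistently'' names the goal but supplies neither the configuration nor the verification, and the verification is precisely where the parity hypothesis does its work.

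For sufficiency ($z$ odd), your proposed reduction --- the stone's orientation strictly alternates between consecutive swaps of any two adjacent replicas --- is unproven, is stronger than needed (equal winding numbers require only equal \emph{counts} of clockwise and counterclockwise passes, not strict alternation), and is doubtful for arbitrary adjacent pairs such as two vertices of $Z$. Moreover, alternation of orientation alone does not determine the direction of a pass: one must also know which of the two replicas the stone coexists with at the swap, which is why \cref{thm:cycles} can run this argument (the coin circulates in a single direction, fixing the coexistence pattern) but it does not transfer here. The paper's actual route is different: use the parity of $m=z$ to show every orbit containing a prerefractive diagram contains a \emph{post-pivotal} one; apply the near-conjugate/time-reversal argument of \cref{thm:wedgecomplete} (via \cref{thm:conjugates}) around that swap to conclude that every replica except the two symmetric ones $\mathbf a_1,\mathbf a_2$ has winding number $0$; then show the next post-pivotal diagram's bridged edge avoids $\{\mathbf a_1,\mathbf a_2\}$ unless $(\ell,r)=(2,1)$, which forces one and then (by the zero sum) both remaining winding numbers to vanish; and finally give a separate alternation argument for the exceptional case $(\ell,r)=(2,1)$. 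None of these steps, nor any substitute for them, appears in your proposal, so as written it is an outline of intent rather than a proof.
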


\begin{proof}
Let $G=\compl_n(K_{\ell, r})$. If $\ell=r=1$, then $K_{\ell,r}=K_2$, so \cref{thm:complcomplete} tells us that $G$ is not ensnaring. Henceforth, assume either $\ell$ or $r$ is not $1$; we may assume without loss of generality that $\ell\geq 2$. Let $m=n-\ell-r$. We can partition the vertices of $G$ into sets $A=\{a_1,\dots,a_\ell\}$, $B=\{b_1,\dots, b_r\}$, and $X=\{x_1,\dots, x_m\}$ so that every edge is present except those connecting $A$ and $B$. 

Suppose first that $m$ is even; we will construct a stone diagram $D_0$ such that the orbit of $\Theta_G$ containing $D_0$ is not contractible. To construct $D_0$, place the replicas on $\Cycle_n$ so that \begin{itemize}
\item the replicas of vertices in $X$ appear in the clockwise cyclic order ${\bf x}_1,\ldots,{\bf x}_m$; 
\item the replicas of vertices in $B$ are all in the clockwise arc from ${\bf x}_m$ to ${\bf x}_1$; 
\item the replicas of vertices in $A\setminus\{a_1\}$ are all in the clockwise arc from ${\bf x}_1$ to ${\bf x}_2$; 
\item ${\bf a}_1$ is one space counterclockwise of ${\bf b}_1$. 
\end{itemize}
Then place the stone so that it coexists with ${\bf a}_1$ and points toward ${\bf b}_1$. Note that $D_0$ is prerefractive and has associated bridged edge $a_1\to b_1$.  

Imagine starting with $D_0$ and iteratively applying $\Theta_G$. Until the stone sees ${\bf x}_1$, the bridged edge will continue to point from $A$ to $B$, and the stone will not turn around. After the stone reaches ${\bf x}_1$, the stone coexists with ${\bf x}_1$ for two steps, and then the stone moves clockwise past ${\bf x}_1$. At this time, the bridged edge reverses orientation, and it continues to point from $B$ to $A$ until the stone reaches $\mathbf x_2$. The stone will then move past ${\bf x}_2,\ldots,{\bf x}_m$ in turn, with the bridged edge reversing orientation each time so that afterwards it points from $A$ to $B$. This process will then repeat. The same argument used in the proof of \cref{thm:complcomplete} tells us that $\mathbf x_1$ only moves counterclockwise, so the orbit is not contractible. 

Now suppose $m$ is odd. Let $\mathcal O$ be an orbit of $\Theta_G$; we will show that $\mathcal O$ is contractible. We may assume there exists a prerefractive stone diagram $D\in\mathcal O$ since otherwise $\mathcal O$ is certainly contractible. Without loss of generality, we can assume the stone coexists with ${\bf a}_1$ and points clockwise toward ${\bf b}_1$ in $D$. Furthermore, we may assume that if we start at the stone and read the replicas clockwise around the diagram, the replicas of the vertices in $X$ appear in the order $\mathbf x_1,\mathbf x_2,\ldots,\mathbf x_m$.

We claim that $\mathcal O$ must contain a post-pivotal stone diagram. Let $k$ be the smallest positive integer such that the stone points toward ${\bf a}_1$ in $\Theta^k(D)$. Let $k'$ be the smallest positive integer such that the stone points toward ${\bf a}_1$ in $\Theta_G^{k+k'}(D)$. If $\Theta_G^k(D)$ is pre-pivotal and no diagram between $\Theta^k(D)$ and $\Theta^{k+k'}(D)$ is post-pivotal, then there are $m$ times while we transition from $\Theta_G^k(D)$ to $\Theta_G^{k+k'}(D)$ during which the bridged edge changes whether it points from $A$ to $B$ or from $B$ to $A$. Since $m$ is odd, the bridged edge ends up in the opposite orientation. Hence, when the stone sees ${\bf a}_2$ again, the resulting stone diagram will be post-pivotal.

Without loss of generality, we can assume that the current stone diagram is post-pivotal: after the stone moves $\mathbf a_1$ past $\mathbf b_1$, it swaps $\mathbf a_1$ and $\mathbf a_2$ (we switch $\ell$ and $r$ if necessary). The diagrams before and after this swap are nearly conjugates; as $a_1$ and $a_2$ are symmetric vertices, similar to the proof of \cref{thm:wedgecomplete}, we know all replicas except $\mathbf a_1$ and $\mathbf a_2$ have winding number $0$.

If, as the stone moves counterclockwise, it encounters any replica of the form $\mathbf a_i$ before $\mathbf x_m$, the resulting stone diagram will be post-pivotal. Otherwise, once the stone moves past $\mathbf x_m$, the bridged edge reverses orientation, so if it encounters any replica of the form $\mathbf b_i$ before $\mathbf x_{m-1}$, the resulting stone diagram will be post-pivotal. This alternating pattern continues.

Observe that the stone begins going counterclockwise with $\mathbf a_1$ right behind it. If the next post-pivotal stone diagram has bridged edge $\mathbf a_1\to \mathbf a_2$ or $\mathbf a_2\to \mathbf a_1$, then it takes two full revolutions of the stone for this to occur. If either $\mathbf a_3$ or $\mathbf b_2$ exists, it is impossible to avoid an earlier post-pivotal stone diagram when the stone passes by this replica in both revolutions. Thus, unless $\ell=2$ and $r=1$, the next post-pivotal stone diagram's bridged edge will not involve both $\mathbf a_1$ and $\mathbf a_2$. Using the argument from \cref{thm:wedgecomplete} again, we see one of $\mathbf a_1$ or $\mathbf a_2$ has winding number $0$. As the sum of all winding numbers is $0$, we conclude the other has winding number $0$ as well, so the orbit is contractible.

It remains to address the case where $\ell=2$ and $r=1$. When $\mathbf a_1$ and $\mathbf a_2$ first swap, $\mathbf a_1$ swaps clockwise past $\mathbf a_2$, and the bridged edge becomes $\mathbf a_2\to \mathbf b_1$. The next time the stone sees $\mathbf a_1$, the bridged edge has reversed orientation $m$ times (once for each $\mathbf x_i$), so it is now $\mathbf b_1\to\mathbf a_2$. The resulting stone diagram is pre-pivotal, so $\mathbf a_1$ and $\mathbf a_2$ do not swap, and the bridged edge pivots back to $\mathbf b_1\to\mathbf a_1$. The stone then completes another revolution counterclockwise before seeing $\mathbf a_2$ again, with the bridged edge reversing orientation another $m$ times to $\mathbf a_1\to\mathbf b_1$. This stone diagram is post-pivotal, so $\mathbf a_1$ swaps counterclockwise past $\mathbf a_2$.

Thus, every time $\mathbf a_1$ swaps past $\mathbf a_2$, the direction it does so alternates. We conclude that both replicas have the same winding number. The remaining winding numbers are all $0$, and the sum of all winding numbers is $0$, so $\mathbf a_1$ and $\mathbf a_2$ have winding number $0$ as well. Thus, the orbit is contractible. This proves that $G$ is ensnaring, as desired.
\end{proof}

\section{Future Directions}\label{sec:future}
\subsection{Ensnaring Graphs}
There are still many questions concerning ensnaring graphs that we have not answered. Of course, it would be wonderful to have a complete characterization of ensnaring graphs, though that seems quite difficult at the moment. Here we list some specific problems that could be more manageable.  

First, let us recall two conjectures that we stated earlier. \cref{conj:wedge_nonensnaring} says that a wedge of two non-ensnaring graphs is nonensnaring. \cref{conj:complpar} says that whether the $n$-vertex complement of a fixed graph is ensnaring depends only on the parity of $n$. Proving either of these statements would provide very useful structural information about the family of ensnaring graphs. 

With an eye toward understanding dense graphs, it is natural to ask when the $n$-vertex complement of a tree is ensnaring. 

\begin{conjecture}\label{conj:compltree}
Let $T$ be an $m$-vertex tree, and let $n>m$ be an even integer. Then $\compl_n(T)$ is ensnaring if and only if $m$ is odd.
\end{conjecture}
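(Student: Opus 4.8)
The plan is to combine the reduction of \cref{thm:complcomp} with the bridged-edge technology of \cref{sec:connected_complement}, treating this as a generalization of \cref{thm:oddglue} from the complete bipartite case to an arbitrary tree. Write $G=\compl_n(T)$ and partition its vertices into the $m$ tree vertices and the set $X$ of $n-m$ vertices that are isolated in $\overline G$; note that each vertex of $X$ is adjacent in $G$ to every other vertex. Since $\overline G=T\sqcup(n-m)K_1$, every edge of $\overline G$ is a tree edge, so by \cref{prop:bridge} every orbit that contains a prerefractive stone diagram has all of its bridged edges contained in $T$, oriented as directed tree edges; orbits with no prerefractive diagram are contractible because the stone never moves. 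Because $n$ is even, we have $\#X=n-m\equiv m\pmod 2$, so the hypothesis ``$m$ odd'' is exactly the statement that the number of universal vertices is odd. This is the same parity that governs \cref{thm:oddglue}; indeed, when $T$ is the star $K_{1,m-1}$ we have $\overline T=K_1\sqcup K_{m-1}$ and $G=\compl_n(K_{1,m-1})$, so that case is already contained in \cref{thm:oddglue}.

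For the direction ``$m$ even implies not ensnaring,'' I would exhibit an explicit non-contractible orbit, following the even case of \cref{thm:oddglue} and the mechanism of \cref{thm:complcomplete}. Fix a leaf $b$ of $T$ with neighbor $a$, and build a stone diagram whose bridged edge is $a\to b$, placing the $n-m$ replicas of $X$ together with the remaining tree replicas so that, as the stone makes a revolution, each universal replica it passes triggers a flipping transition (a universal vertex is a tree-neighbor of nothing), while the remaining tree replicas only cause the bridged edge to pivot and return. Because $\#X$ is even, after one revolution the bridged edge has flipped an even number of times and is restored to orientation $a\to b$, and a chosen replica $\mathbf x_1\in X$ is moved a net nonzero amount in a single direction exactly as in \cref{thm:complcomplete}. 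This forces $\mathfrak w_{\mathcal O}(\mathbf x_1)\neq 0$, so $G$ is not ensnaring. The routine part is checking that the tree replicas can be arranged so that they never divert the bridged edge off a controlled set of edges during a revolution.

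For the direction ``$m$ odd implies ensnaring,'' I would take an arbitrary orbit $\mathcal O$ with a prerefractive diagram and argue that all winding numbers vanish. Tracking the bridged edge as an oriented walk on $T$, the stone's orientation is reversed (net) precisely at the post-pivotal transitions, while pre-pivotal and flipping transitions preserve it. The goal is to show, using that $\#X$ is odd, that $\mathcal O$ must contain a post-pivotal diagram and, more importantly, that $\mathcal O$ is palindromic in the sense of \cref{thm:conjugates}: there is an index about which $\Theta_G^{r+i}(D)$ and $\Theta_G^{r-i+1}(D)^\top$ agree up to a single swap of two interchangeable replicas, exactly as in the proofs of \cref{thm:wedgecomplete,thm:multitree,thm:oddglue}. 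Such a palindrome forces every swap to be undone by its mirror image, so every replica not involved in the central swap has winding number $0$; the central pair then has equal winding numbers, and since the winding numbers sum to $0$, all of them vanish.

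The hard part is the odd case, and specifically the palindrome. In \cref{thm:oddglue} and the complete-graph arguments the central swap is always between two genuinely symmetric vertices (two vertices in a partite class of $K_{\ell,r}$, or two vertices of $K_m$), whereas a general tree need not have any nontrivial automorphisms, so the central swap produced by the bridged-edge walk may involve two non-interchangeable tree vertices that the current technique cannot absorb. The crux is therefore to understand the trajectory of the bridged edge on an arbitrary tree well enough to locate a palindromic axis whose central swap is forced to involve two universal vertices of $X$ (which are always interchangeable), using the odd parity of $\#X$ to guarantee that such an axis exists; alternatively, one would need a winding-cancellation argument that does not rely on vertex symmetry at all. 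Isolating the correct parity invariant for the bridged-edge walk on $T$ — namely that the number of orientation reversals over one period is congruent to $n-m$ modulo $2$ — and linking it to the existence of a symmetric palindromic center is the main obstacle; its resolution would also likely settle \cref{conj:complpar} for trees.
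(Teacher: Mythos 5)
First, a point of comparison: the statement you are proving is \cref{conj:compltree}, which the paper leaves as an open conjecture --- there is no proof in the paper to match your approach against. So the only question is whether your proposal itself constitutes a proof, and it does not: you explicitly flag the core of the odd case (locating a palindromic axis whose central swap involves interchangeable vertices, for a tree with no nontrivial automorphisms) as ``the main obstacle,'' so what you have written is a research plan with an acknowledged missing centerpiece, not an argument.

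Beyond the admitted gap, there is a concrete error in the even case that undermines the claim that the remaining work is ``routine.'' You assert that as the stone makes a revolution, only the universal replicas in $X$ trigger flipping transitions, ``while the remaining tree replicas only cause the bridged edge to pivot and return.'' This misreads the case analysis behind \cref{prop:bridge}: with bridged edge $a\to b$ (a tree edge of $\overline G$), an encountered replica $\mathbf x$ causes a pre-pivotal pivot only when $x$ is a tree-neighbor of $a$; it causes a post-pivotal reversal of the stone when $x$ is a tree-neighbor of $b$ but not of $a$; and it causes a flip whenever $x$ is a tree-neighbor of \emph{neither} endpoint --- which, for a general tree, is the situation for most tree vertices. (Take $T$ a path $v_1v_2\cdots v_m$ with bridged edge $v_1\to v_2$: only $\mathbf v_2$ can pivot, $\mathbf v_3$ forces a post-pivotal reversal, and $\mathbf v_4,\ldots,\mathbf v_m$ all flip.) Consequently the parity of orientation reversals per revolution is \emph{not} $\#X \bmod 2$; it depends on the tree structure and on the cyclic arrangement of all replicas, and the bridged edge can wander over the whole tree rather than oscillating between $a\to b$ and $b\to a$. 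The analogous construction in \cref{thm:oddglue} works only because $K_{\ell,r}$ has the very special property that every vertex of one partite class is adjacent to every vertex of the other, so all of $B$ can pivot consecutively against a tail in $A$; no tree other than a star has anything like this uniformity. Designing an initial configuration that avoids all post-pivotal events for all time, and controlling the flip parity, is precisely the open combinatorial problem --- the paper's separate conjecture on the orbit sizes of $\Theta_{\compl_n(P_m)}$ is a sign that even the path case resisted the authors. Your reduction of the hypothesis to ``$\#X$ is odd'' and the observation that the star case follows from \cref{thm:oddglue} are correct, but both cases of the conjecture remain unproven in your proposal.
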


\subsection{Reflections and Refractions}\label{subsec:refl}
The article \cite{ADS} considers combinatorial billiard systems that are more general than the ones that we have considered so far because they allow for mirrors, which are a third type of hyperplane besides windows and metalenses. When a beam of light hits a mirror, it reflects. We can also define a toric mirror to be the image of a mirror under the quotient map $\qq$. 

To define a system with reflections and refractions, we fix a partition $E=\Eflect\sqcup\Efract$ of the edge set of our graph $G$ into two subsets. We call the pair $(G;\Eflect\sqcup\Efract)$ a \dfn{materialized graph}. The edges in $\Eflect$ are called \dfn{reflection edges}, while the edges in $\Efract$ are called \dfn{refraction edges}. Then a hyperplane $\HH_{i,j}^k$ is a mirror if $\{i,j\}\in\Eflect$, and it is a metalens if $\{i,j\}\in\Efract$. Thus, the setting that we have considered so far in this article is the one in which $E=\Efract$ (and $\Eflect=\emptyset$). We modify the definition of $\widetilde\Theta_G$ so that 
$\widetilde\Theta_G(w,i,\epsilon)=(w,i+\epsilon,\epsilon)$ if $\HH^{(w,s_i)}$ is a mirror (when the beam of light reflects off of $\HH^{(w,s_i)}$, it stays in the alcove $\BB w$). This leads to the following more general definition of $\Theta_G$:
\[\Theta_G(v,i,\epsilon)=\begin{cases} (\overline s_i v,i+\epsilon,\epsilon) & \mbox{if }\{v^{-1}(i),v^{-1}(i+1)\}\not\in E; \\   (\overline s_iv,i-\epsilon,-\epsilon) & \mbox{if }\{v^{-1}(i),v^{-1}(i+1)\}\in \Efract; \\   (v,i+\epsilon,\epsilon) & \mbox{if }\{v^{-1}(i),v^{-1}(i+1)\}\in \Eflect.  \end{cases}\]
(Note that $\Theta_G$ now also depends on the partition $\Eflect\sqcup\Efract$, although we omit that dependence from the notation.) When we apply $\Theta_G$ to a stone diagram in which the stone sits on $j$ and has orientation $\epsilon$, we look at the replicas sitting on vertices $j$ and $j+\epsilon$. If the vertices corresponding to these two replicas are the endpoints of a reflection edge, then we move the stone from vertex $j$ to vertex $j+\epsilon$ (without changing its orientation), and we do not move any of the replicas; otherwise, we modify the stone diagram as before (depending on whether these two replicas are non-adjacent or are the endpoints of a refraction edge). 

Our decision to neglect reflection edges throughout most of the article was born out of a desire for simplicity. However, we do wish to briefly indicate how some of our results generalize when we have both reflection edges and refraction edges. 
We will omit most of the details.  

Say the materialized graph $(G;\Eflect\sqcup\Efract)$ is \dfn{ensnaring} (respectively, \dfn{expelling}) if all (respectively, none) of the corresponding billiard trajectories are contractible. As before, we can define winding numbers and winding vectors and show that a materialized graph is ensnaring (respectively, expelling) if and only if all (respectively, none) of the winding vectors of the orbits are $0$.   

Two vertices that form the endpoints of a reflection edge must have the same winding number in any orbit of $\Theta_G$ because their replicas can never move past each other. In particular, if $\Efract$ is empty and $G$ is connected, then $(G;\Eflect\sqcup\Efract)$ is ensnaring. 

As a generalization of \cref{prop:expelling}, one can show (using essentially the same proof) that $(G,\Eflect\sqcup\Efract)$ is expelling if $\Efract$ is nonempty and the graph obtained from $G$ by contracting all reflection edges is bipartite. However, the converse of this statement does not hold; this is unlike \cref{prop:expelling}, whose converse is \cref{prop:expelling2}. 

\cref{thm:conjugates}, \cref{thm:wedge}, \cref{thm:union}, and \cref{thm:wedgecomplete} generalize readily to the setting with reflection edges and refraction edges. Furthermore, \cite[Lemma~5.1]{ADS} applies to all materialized trees, not just those with only refraction edges. Using this, we can show that the wedge of any complete graph with only refraction edges with an arbitrary materialized tree is ensnaring. The proof is essentially the same as that of \cref{cor:completetree}. 

Finally, suppose $G$ is a cycle graph. Generalizing \cref{thm:cycles}, we can show that $(G;\Eflect\sqcup\Efract)$ is ensnaring if $|\Efract|$ is $0$ or odd and is expelling otherwise. Indeed, if $|\Efract|$ is even, this follows from some of the preceding remarks. Now suppose $|\Efract|$ is odd. The proof of \cref{thm:cycles} shows that in every orbit of $\Theta_G$, any two vertices connected by a refractive edge have the same winding number. Furthermore, any two vertices connected by a reflective edge have the same winding number. Thus, all the vertices have the same winding number, which must be $0$. 

In general, it would be interesting to gain a better understanding about which materialized graphs are expelling and which are ensnaring. Can we completely characterize expelling materialized graphs, as we did in \cref{thm:expelling} in the case where $\Eflect=\emptyset$?  

\subsection{Orbit Sizes} 
The article \cite{ADS} by Adams, Defant, and Striker introduced refractive toric promotion as a novel combinatorial dynamical system modeling a combinatorial billiard system. The primary goal of that article was to find materialized graphs $G$ for which $\Theta_G$ has well-behaved orbit sizes. While we had quite different topological motivations in this article, our investigations caused us to stumble upon a new class of examples that appear to have fascinating orbit structures. 

\begin{conjecture}
Let $P_m$ be a path graph with $m$ vertices, and let $G=\compl_n(P_m)$ be the $n$-vertex complement of $P_m$. If $m$ is odd and $n$ is even, then every orbit of $\Theta_G$ has size $6$ or $6(mn-n-3m+4)$.
\end{conjecture}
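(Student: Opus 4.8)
The plan is to split the orbits of $\Theta_G$ according to whether the stone ever passes through a window, and to treat the two resulting classes separately. Since $\overline{G}$ is the disjoint union of the path $P_m$ with $n-m$ isolated vertices, the only nonedges of $G$ are the $m-1$ edges of $P_m$, so the stone passes through a window exactly when it is flanked by the replicas of two consecutive path vertices. First I would dispose of the orbits containing no prerefractive diagram. In such an orbit every swap is a refraction, so the coin moves at every step while the stone stays pinned to a single vertex $j$ of $\Cycle_n$, only ever interchanging the three replicas on $j-1,j,j+1$; because each interchange is a refraction, these three replicas are pairwise adjacent in $G$ and hence form a triangle. The same three-replica cycling that underlies \cref{thm:complete} then shows directly that each such orbit has size exactly $6$, which accounts for the value $6$ in the statement and matches the degenerate case $m=1$ (where $G=K_n$ and every orbit is of this kind).

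The remaining orbits contain a prerefractive diagram and are therefore associated, via \cref{prop:bridge}, to the unique nontrivial component $P_m$ of $\overline{G}$; for these the bridged edge is an oriented edge of the path. Specializing the three cases in the proof of \cref{prop:bridge} to $G=\compl_n(P_m)$, a bridged edge $v_i\to v_{i+1}$ updates in exactly one of three ways according to the replica $\mathbf x$ that sits just beyond the pointed-at replica: if $x=v_{i-1}$ (pre-pivotal) it pivots about its tail to $v_i\to v_{i-1}$; if $x=v_{i+2}$ (post-pivotal) it pivots about its head to $v_{i+2}\to v_{i+1}$; and if $x$ is a universal vertex or a far path vertex (flipping) it reverses to $v_{i+1}\to v_i$. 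Thus the bridged edge performs a closed walk through the $2(m-1)$ oriented edges of $P_m$, pivoting around the endpoints $v_1$ and $v_m$ and reversing in the interior, while between consecutive updates the stone crosses a stretch of mutually adjacent replicas by refractions.

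To extract the size I would show that over one period the bridged edge makes a fixed number of complete tours of the oriented edges of $P_m$, and that the number of refractions separating consecutive bridged-edge updates is controlled by how many universal (and far) replicas the stone sweeps past. The parity hypotheses enter exactly here: as in the proof of \cref{thm:oddglue}, each time the stone passes a universal vertex the bridged edge reverses, and with $m$ odd and $n$ even the net effect of one sweep of the $n-m$ universal vertices forces the bridged edge into the opposite orientation, so that closing up the orbit requires a definite, graph-independent number of sweeps. Assembling these counts should yield the product $6\big((n-3)(m-1)+1\big)$, with the factor $6$ reflecting the same local three-replica cycling responsible for the size-$6$ orbits and the factor $(n-3)(m-1)+1$ measuring the macroscopic progress of the bridged edge as it traverses the $m-1$ path edges against the $n-3$ remaining replicas. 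I would also use \cref{thm:conjugates} to pair each diagram with its conjugate, which appears to be the source of an extra factor of $2$ in the bookkeeping.

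The main obstacle is that the type of each bridged-edge update — pre-pivotal, post-pivotal, or flipping — depends on the entire cyclic arrangement of replicas, not merely on the current bridged edge, so the bridged-edge walk is not an autonomous system. To push the count through I expect one must carry a finer invariant, such as the cyclic order of the path replicas together with the number of universal replicas lying in each gap between them, and prove that this invariant returns to its initial value only after the prescribed number of tours. Establishing that every traveling orbit has exactly the same size (rather than a size merely dividing the claimed value), and ruling out traveling orbits of any other size, is the crux; the parity constraints $m$ odd and $n$ even should be precisely what makes the return time of this invariant uniform.
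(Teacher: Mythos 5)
This statement is one of the paper's open conjectures (stated in the ``Orbit Sizes'' part of \cref{sec:future}); the paper itself offers no proof of it, so the only question is whether your argument stands on its own. It does not. The part you do carry out completely is the easy half: an orbit containing no prerefractive diagram pins the stone to a single vertex of $\Cycle_n$, the three replicas on the adjacent positions must be pairwise adjacent in $G$ and cycle exactly as in the proof of \cref{thm:complete}, and such an orbit has size exactly $6$. That is correct, and your reduction of the remaining orbits to a bridged-edge walk on the oriented edges of $P_m$ (via \cref{prop:bridge}, with the pre-pivotal/post-pivotal/flipping trichotomy) is a sensible frame, consistent with how \cref{thm:oddglue} analyzes complements.

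The genuine gap is that everything needed to get the number $6(mn-n-3m+4)$ is deferred rather than proved. You never determine how many bridged-edge updates occur in one period, never count the refraction steps between consecutive updates, and never show the period is the same for every traveling orbit rather than merely a divisor of the claimed value --- and you explicitly concede all three points (``the crux''). The difficulty you identify is real and is precisely why this remains a conjecture: the update type of the bridged edge depends on the full cyclic arrangement of replicas, so the walk is not autonomous, and the finer invariant you propose (cyclic order of path replicas together with the counts of universal replicas in each gap) is named but no return-time analysis is performed for it. The parity hypotheses ($m$ odd, $n$ even) are invoked only by analogy with \cref{thm:oddglue}, not through any computation specific to $\compl_n(P_m)$, and the claimed extra factor of $2$ from conjugation (\cref{thm:conjugates}) is asserted without justification. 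As written, the proposal establishes the size-$6$ orbits and nothing about the size of the traveling orbits, so the conjecture remains open.
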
 

\subsection{Other Types} 
The article \cite{ADS} discusses how to define combinatorial refraction billiards for other Coxeter groups besides the affine symmetric group. This definition relies on a fixed choice of a periodic infinite word in the simple reflections of the Coxeter group. It could be interesting to define and explore the analogues of ensnaring graphs and expelling graphs for other affine Weyl groups. It seems especially nice to consider the affine Weyl group $\widetilde C_n$, whose Coxeter graph is a path with vertices $s_0^C,s_1^C,\ldots,s_n^C$ (listed in order along the path). A natural periodic sequence of simple reflections that one could use to define refraction billiards is obtained by repeating the sequence \[s_0^C\,s_1^C,\ldots,s_{n-1}^C,s_n^C,s_{n-1}^C,\ldots,s_1^C.\] 

\section*{Acknowledgments}
Colin Defant was supported by the National Science Foundation under Award No.\ 2201907 and by a Benjamin Peirce Fellowship at Harvard University. He thanks Pavel Galashin for suggesting the topological perspective on toric combinatorial billiards that we have pursued. This research was conducted at the University of Minnesota Duluth REU with support from Jane
Street Capital, NSF Grant 1949884, and donations from Ray Sidney and Eric Wepsic. We thank Joe Gallian for providing this wonderful opportunity. We also thank Noah Kravitz, Mitchell Lee, and Maya Sankar for helpful discussions.

\end{document}